\documentclass[a4paper,11pt,reqno]{amsart}
\usepackage{amsmath,amsthm,amssymb}
\usepackage[nospace,noadjust]{cite}
\usepackage[dvipsnames]{xcolor}
\usepackage{enumitem}
\usepackage{booktabs}
\usepackage{aliascnt}
\usepackage{hyperref}
\hypersetup{
  colorlinks   = true,
  urlcolor     = blue,
  linkcolor    = Purple,
  citecolor    = red
}
\usepackage[capitalise]{cleveref}
\usepackage[margin=2.5cm]{geometry}
\usepackage[foot]{amsaddr}
\allowdisplaybreaks
\theoremstyle{definition}
\newtheorem{theorem}{Theorem}[section]
\crefname{theorem}{Theorem}{Theorems}
\newaliascnt{definition}{theorem}
\newtheorem{definition}[definition]{Definition}
\aliascntresetthe{definition}
\crefname{definition}{Definition}{Definitions}
\newaliascnt{example}{theorem}
\newtheorem{example}[example]{Example}
\aliascntresetthe{example}
\crefname{example}{Example}{Examples}
\newaliascnt{remark}{theorem}
\newtheorem{remark}[remark]{Remark}
\aliascntresetthe{remark}
\crefname{remark}{Remark}{Remarks}
\newaliascnt{corollary}{theorem}
\newtheorem{corollary}[corollary]{Corollary}
\aliascntresetthe{corollary}
\crefname{corollary}{Corollary}{Corollaries}
\newaliascnt{proposition}{theorem}
\newtheorem{proposition}[proposition]{Proposition}
\aliascntresetthe{proposition}
\crefname{proposition}{Proposition}{Propositions}
\newaliascnt{notation}{theorem}
\newtheorem{notation}[notation]{Notation}
\aliascntresetthe{notation}
\crefname{notation}{Notation}{Notations}
\newaliascnt{lemma}{theorem}
\newtheorem{lemma}[lemma]{Lemma}
\aliascntresetthe{lemma}
\crefname{lemma}{Lemma}{Lemmas}
\newaliascnt{problem}{theorem}

\aliascntresetthe{problem}
\crefname{problem}{Problem}{Problems}

\newcommand{\F}{\mathbb F}
\newcommand{\K}{\mathbb K}
\newcommand{\Fq}{\F_q}
\newcommand{\mC}{\mathcal C}
\newcommand{\mD}{\mathcal D}
\newcommand{\mL}{\mathcal L}
\newcommand{\mX}{\mathcal X}
\newcommand{\mE}{\mathcal E}
\newcommand{\mG}{\mathcal G}
\newcommand{\crit}{\mathrm{crit}}

\DeclareMathOperator{\supp}{supp}
\DeclareMathOperator{\Mat}{Mat}
\DeclareMathOperator{\PG}{PG}
\DeclareMathOperator{\codim}{codim}

\DeclareMathOperator{\rowsp}{rowsp}
\DeclareMathOperator{\colsp}{colsp}
\DeclareMathOperator{\wt}{wt}
\DeclareMathOperator{\rank}{rank}
\DeclareMathOperator{\im}{Im}
\DeclareMathOperator{\dd}{d}

\title{Additive codes arising from hypergraphs}
\author{Gianira N. Alfarano}
\address{\textnormal{Universit\'e de Rennes, IRMAR, Rennes, France.}}
\email{gianira-nicoletta.alfarano@univ-rennes.fr}

\begin{document}
\begin{abstract}
We study the critical exponent of additive codes through an integer polymatroid associated with the code. We give a coding-theoretic proof of Whittle's Critical Theorem in this setting, a geometric description of the critical exponent in terms of $h$-projective systems, and general bounds, including an analogue of Kung's girth bound. We then study additive codes whose polymatroid is the hypergraphic polymatroid of a hypergraph $H$. For these codes the critical exponent turns to be determined by the weak chromatic number of $H$. If the code is faithful, then the minimum folded Hamming weight of the dual code is equal to  the Berge girth of $H$. If $H$ is connected, the minimum distance is equal to  the edge-connectivity of $H$. As a consequence, for $h\geq2$ we determine all such codes with connected $H$ that attain the Singleton bound, that is, all faithful hypergraphic additive quasi-MDS codes. We specialise the Griesmer and linear programming bounds to hypergraphic codes, we derive a lower bound on the minimum distance from the Laplacian eigenvalues of the weighted $2$-section of $H$, and we compare all these bounds computationally.
\end{abstract}

\subjclass[2020]{94B05, 05B35, 05C65, 05C15, 94B65, 51E20, 05C40, 05C50}
\keywords{Additive codes, critical exponent, polymatroids, hypergraphs, weak colouring, quasi-MDS codes}
\maketitle

\section{Introduction}

The characteristic polynomial is a well-studied invariant in matroid theory and its generalisations. Its relation with critical problems was developed by Crapo and Rota for projective geometries and by Whittle for representable polymatroids; see \cite{crapo1970foundations,whittle1994critical,kung1996critical}. This problem provides a more general setting for many results in extremal combinatorics
 and has been the subject of numerous applications and generalisations; see \cite{alon,britz2005extensions,bryl_oxley,zaslavsky1987mobius,koga+,gruica2022rank, imamura2023, alfarano2026recursive}. For a linear code, the critical exponent is the minimum dimension of a subcode whose support is the whole set of coordinates \cite{dowling1971codes,britz2016covering}. In this paper we consider the same problem for additive codes, that is, for $\F_q$-linear subspaces of $\F_{q^h}^n$, which recently gained a lot of attention due to their application in quantum error-correction. In particular, additive codes geometrically correspond to $h$-projective systems, that is, to collections of projective subspaces of $\PG(k-1,q)$ of projective dimension at most $h-1$, and their minimum distance is determined by the maximum number of these subspaces contained in a hyperplane. This
geometric approach has recently led to several results on the parameters of additive codes and to the discovery of new additive codes whose parameters outperform those of linear codes; see e.g.
\cite{kurz2024additive,d2026generalized,adriaensen2023additive,bartoli2025long}.

To an additive code $\mC\subseteq\F_{q^h}^n$ of $\F_q$-dimension $k$ we associate the integer polymatroid $P_{\mC}$ with ground set $[n]$ and rank function $\rho_{\mC}(A)=k-\dim_{\F_q}\mC(A)$, where $\mC(A)$ is the subcode of codewords vanishing on $A$. It is represented by the column spaces of the blocks of an expanded generator matrix of $\mC$. We give a direct proof of Whittle's Critical Theorem for polymatroids in \cref{thm:critical-additive}. This gives a geometric description of the critical exponent in terms of the~$h$-projective system of $\mC$ and of blocking sets in the sense of \cite{d2026generalized}, and leads to general bounds on the critical exponent, including a generalisation of Kung's girth bound \cite{kung1996critical}. In this bound the girth is replaced by the dual distance of the code; for $h=1$ it is Kung's bound, and for every $h$ it is attained by the codes from spreads; see \cref{thm:kung-block,ex:spread}.

We then consider hypergraphs. For a hypergraph $H$ with vertex set $V$, Whittle defined in \cite{whittle1992hypergraph} a hypergraphic polymatroid as $f_H(A)=|V|-\kappa(A)$, where $\kappa(A)$ is the number of connected components of the hypergraph on $V$ with edge set $A$; see also \cite{helgason1974,lovasz1977flats}. We give an explicit representation of $f_H$ over every field in \cref{thm:canonical-hypergraphic-representation}. We call an additive code \emph{hypergraphic} if $\rho_{\mC}=f_H$ for some hypergraph $H$, and we call such an $H$ an \emph{underlying hypergraph} of $\mC$. For an $r$-uniform hypergraph we construct a code $\mC_H\subseteq\F_{q^{r-1}}^{|\mE|}$ with underlying hypergraph $H$; see \cref{def:hypergraphic-code}. A hypergraphic code need not be of this form, and an underlying hypergraph need not be unique, so we state our results for arbitrary hypergraphic codes.

For a hypergraphic code with underlying hypergraph $H$, the Helgason-Whittle identity $\chi_H^{\mathrm w}(z)=z^{\kappa(H)}p_{\mC}(z)$ relates the weak chromatic polynomial of $H$ to the characteristic polynomial of $\mC$, and gives $\crit(\mC)=\lceil\log_q\chi_{\mathrm w}(H)\rceil$; see \cref{thm:chromatic-critical}. For $\mC_H$ this correspondence gives more. Indeed, a weak colouring of $H$ with $q^t$ colours corresponds to an ordered $t$-tuple of codewords with full joint support, and each such $t$-tuple arises from exactly $q^{t\kappa(H)}$ colourings; see \cref{prop:colorings-full-support}.

We finally consider the parameters of hypergraphic additive codes. We focus on \emph{faithful} codes, where for every coordinate $i$ the set $\{c_i:c\in\mC\}$ of $i$-th entries of codewords is the whole alphabet $\F_{q^h}$, following the definition from \cite{kurz2024additive}. Writing the codewords over $\F_q$ as vectors in $(\F_q^h)^n$, the Hamming weight becomes the folded Hamming weight introduced in \cite{martinez2025folded}, and the code has a dual in $(\F_q^h)^n$. For a faithful hypergraphic code, the minimum folded Hamming weight of the dual code is equal to  the Berge girth of every underlying hypergraph (\cref{thm:dual-berge-girth}), and for a connected underlying hypergraph $H$ the minimum distance is equal to  the edge-connectivity $\lambda(H)$ (\cref{thm:distance-connectivity}). For graphs these are well-known facts on cut and cycle codes \cite{hakimi1968graph}.

Together with the Moore bound for hypergraphs, these identities show that, for a given minimum distance, a large dual distance forces a large dimension; see \cref{prop:moore}. This girth-distance bound is attained by several codes; see \cref{cor:sts,tab:distance-hypergraphic}. Using these identities we determine all faithful hypergraphic additive codes with a connected underlying hypergraph that attain the Singleton bound, that is, all such quasi-MDS codes in the sense of \cite{martinez2025folded}; see \cref{thm:hypergraphic-singleton-classification}. For $h\geq2$ and normalised dimension $k/h>1$, a quasi-MDS code in this class with minimum distance at least $2$ either has minimum distance $n-1$ and length at most $h+2$, or has length $4$. Both cases occur over every finite field, and the codes of length $4$ correspond to regular multigraphs on $4$ vertices; see \cref{prop:four-edge-family}. We also combine the distance identity with the Griesmer bound for additive codes (\cite{ball2025griesmer,kurz2024additive}) and with Delsarte's linear programming bound (\cite{delsarte1973}), and we obtain a lower bound on $d(\mC)$ from the second smallest Laplacian eigenvalue of the weighted $2$-section of $H$; see \cref{thm:spectral-lower}.

Finally, in \cref{subsec:numerics} we compare these bounds computationally, on some hypergraphic codes and on known optimal additive codes \cite{kurz2024additive}. For hypergraphic codes, the bounds coming from the hypergraph are much stronger than the general coding bounds. For general additive codes, the linear programming bound improves on the Griesmer bound in a few cases and attains the known optimal length $n_2(8,2;3)=5$.

The paper is organised as follows. \Cref{sec:prelim} recalls lattices, polymatroids, the critical problem, additive codes, projective systems and hypergraphs. \Cref{sec:polymatroid} introduces the polymatroid of an additive code, expresses the minimum distance and the dual distance through it, and contains the Critical Theorem and bounds on the critical exponent. \Cref{sec:hypergraphic} studies hypergraphic codes and weak colourings. \Cref{sec:distance} relates the minimum distance to edge-connectivity, proves the girth-distance bound and classifies hypergraphic QMDS codes. \Cref{sec:lp} specialises the Griesmer and linear programming bounds to hypergraphic codes, contains a spectral lower bound on the distance, and compares all bounds computationally.

\section{Preliminaries}\label{sec:prelim}
In this section we recall the background material on lattices, polymatroids, the critical problem, additive codes and hypergraphs.

\begin{notation}
Throughout the paper, $q$ is a prime power and $h,n,k$ are positive integers. For a positive integer $s$ we write $[s]:=\{1,\ldots,s\}$, and for $a\geq0$ we set $[a]_q:=(q^a-1)/(q-1)$, so that $[0]_q=0$. For a field $\mathbb K$ and positive integers $a,b$, we denote by $\Mat_{a,b}(\mathbb K)$ the set of $a\times b$ matrices with entries in $\mathbb K$. For a matrix $M\in\Mat_{a,b}(\Fq)$, $\colsp_{\F_q}(M)$ denotes its column space over $\F_q$. For a vector subspace $U\subseteq\F_q^k$, we denote by $\PG(U)$ the projective space whose points are the $1$-dimensional subspaces of $U$, which has projective dimension $\dim U-1$. In particular, $\PG(k-1,q):=\PG(\F_q^k)$. Dimensions and codimensions are always those of vector spaces unless we say \emph{projective} dimension. A \textbf{hyperplane} of $\PG(k-1,q)$ is $\PG(\Lambda)$ for a vector hyperplane $\Lambda\subseteq\F_q^k$, and a point $\langle v\rangle$ lies in $\PG(\Lambda)$ if and only if $v\in\Lambda$. For $k=1$ the only vector hyperplane of $\F_q^1$ is $\{0\}$, and we regard $\PG(\{0\})=\emptyset$ as the unique projective hyperplane for $k=1$.
\end{notation}

\subsection{Lattices and the M\"obius function}
We recall some basic facts on finite posets and lattices; see \cite{Birkhoff,romanlattice,stanley2011enumerative}. A \textbf{lattice} is a partially ordered set in which any two elements $a,b$ have a least upper bound $a\vee b$ (their \textbf{join}) and a greatest lower bound $a\wedge b$ (their \textbf{meet}). For $a\leq b$ in a poset, the \textbf{interval} $[a,b]$ is the set of all $x$ with $a\leq x\leq b$.

\begin{definition}
Let $(\mathcal P,\leq)$ be a finite poset. The \textbf{M\"obius function} $\mu:\mathcal P\times\mathcal P\to\mathbb Z$ is defined recursively by
\begin{align*}
\mu(X,Y) := \left\{
        \begin{array}{cl}
        1 & \text{ if } X=Y,\\
        -\sum\limits_{{X\leq Z<Y}}\mu(X,Z) & \text{ if } X<Y, \\
        0 & \text{otherwise}.
        \end{array}
          \right.
\end{align*}
\end{definition}

If $E$ is a finite set, the power set $2^E$ ordered by inclusion is a lattice, the \textbf{Boolean lattice}, with $A\vee B=A\cup B$ and
$A\wedge B=A\cap B$. The subspaces of a finite-dimensional vector space $W$, ordered by inclusion, form the \textbf{subspace lattice}, with $U\vee U'=U+U'$ and $U\wedge U'=U\cap U'$.
In the Boolean lattice, $\mu(A,B)=(-1)^{|B|-|A|}$ if $A\subseteq B$ and $\mu(A,B)=0$ otherwise.
We have the following well-known M\"obius inversion formula.
\begin{lemma}
Let $f,g:\mathcal{P}\to\mathbb{Z}$ be functions on a finite poset $\mathcal{P}$. Then
\begin{enumerate}
		\item$\displaystyle f(X)=\sum\limits_{X\leq Y}g(Y)\:\forall\:X \in \mathcal P \mbox{ if and only if }g(X)=\sum\limits_{X\leq Y}\mu(X,Y)f(Y) \:\forall\:X \in \mathcal P$.
		\item$\displaystyle f(X)=\sum\limits_{X\geq Y}g(Y)\:\forall\:X \in \mathcal P\mbox{ if and only if }g(X)=\sum\limits_{X\geq Y}\mu(Y,X)f(Y)\:\forall\:X \in \mathcal P$.
	\end{enumerate}
\end{lemma}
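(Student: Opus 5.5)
By symmetry (applying part (1) to the dual poset, where the order is reversed and the M\"obius function becomes $\mu^{\mathrm{op}}(X,Y)=\mu(Y,X)$), it suffices to prove part (1), and I plan to do so by direct substitution in each direction. The only tool needed is the defining recursion of $\mu$, together with the companion identity $\sum_{X\leq Z\leq Y}\mu(Z,Y)=\delta_{X,Y}$, i.e.\ that $\mu$ is also a left inverse of the zeta function.

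The first step is to establish both orthogonality relations. The definition gives $\sum_{X\leq Z\leq Y}\mu(X,Z)=\delta_{X,Y}$ directly. To get the other one, I view $\mu$ and the zeta function $\zeta(X,Y)=[X\leq Y]$ as elements of the incidence algebra of $\mathcal P$, or concretely as square matrices indexed by $\mathcal P$ after choosing a linear extension, so that both are upper unitriangular. The first relation says $\mu\zeta=I$. A one-sided inverse of a square matrix is two-sided, so $\zeta\mu=I$ as well, and this is exactly the second relation. This is the step I expect to be the only real content; everything after it is bookkeeping.

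For ($\Rightarrow$), assume $f(X)=\sum_{Y\geq X}g(Y)$ for all $X$. Then
\[
\sum_{Y\geq X}\mu(X,Y)f(Y)=\sum_{Y\geq X}\mu(X,Y)\sum_{Z\geq Y}g(Z)=\sum_{Z\geq X}g(Z)\sum_{X\leq Y\leq Z}\mu(X,Y).
\]
By the defining relation the inner sum is $\delta_{X,Z}$, so the whole expression equals $g(X)$.

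For ($\Leftarrow$), assume $g(X)=\sum_{Y\geq X}\mu(X,Y)f(Y)$ for all $X$. Then
\[
\sum_{Y\geq X}g(Y)=\sum_{Y\geq X}\sum_{Z\geq Y}\mu(Y,Z)f(Z)=\sum_{Z\geq X}f(Z)\sum_{X\leq Y\leq Z}\mu(Y,Z).
\]
Here the inner sum is $\delta_{X,Z}$ by the second orthogonality relation, so the expression equals $f(X)$.

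Alternatively, both directions follow at once from the matrix identities: writing $f$ and $g$ as column vectors, the two conditions read $f=\zeta g$ and $g=\mu f$, and these are equivalent because $\mu=\zeta^{-1}$.

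Part (2) is the same computation with the inequalities reversed. One can either redo the substitution using the two orthogonality relations, or apply part (1) to the dual poset.
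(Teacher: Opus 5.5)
Your proof is correct. The paper does not prove this lemma; it quotes it as the well-known M\"obius inversion formula and points to the standard references. Your argument is the standard incidence-algebra proof: the recursive definition gives $\mu\zeta=I$, $\zeta$ is unitriangular with $\det\zeta=1$ so $\mu=\zeta^{-1}$ is a two-sided inverse over $\mathbb Z$, and then $f=\zeta g\iff g=\mu f$.

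One step deserves a line of justification. In your reduction of (2) to (1), the identity $\mu^{\mathrm{op}}(X,Y)=\mu(Y,X)$ does not follow directly from the paper's recursion, which fixes the first argument. It is equivalent to $\zeta\mu=I$, which you prove anyway. Alternatively, (2) is directly $f=\zeta^{\top}g\iff g=\mu^{\top}f$.
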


\subsection{Polymatroids}
We outline basic facts and definitions of polymatroids. Polymatroids were introduced by Edmonds in \cite{Edmonds1970}. For background on submodular rank functions, see also
\cite{Lovasz1983}.

\begin{definition}
A \textbf{polymatroid} is a pair $P=(E,\rho)$, where $E$ is a finite set and the \textbf{rank function} $\rho:2^E\to\mathbb{R}$ satisfies:
\begin{enumerate}
    \item[($\rho$1)] $\rho(\emptyset)=0$,
    \item[($\rho$2)] $\rho(A)\leq\rho(B)$ whenever $A\subseteq B\subseteq E$,
    \item[($\rho$3)] $\rho(A\cup B)+\rho(A\cap B)\leq\rho(A)+\rho(B)$ for all $A,B\subseteq E$.
\end{enumerate}
\end{definition}

For a positive integer $r$, an \textbf{$r$-polymatroid} is a polymatroid with $\rho(\{x\})\leq r$ for all $x\in E$. An \textbf{integer polymatroid} has rank function with values in $\mathbb{Z}_{\geq0}$, and an integer $1$-polymatroid is a \textbf{matroid}. An element $x$ with $\rho(\{x\})=0$ is a \textbf{loop}. For $A\subseteq E$, the \textbf{restriction} $P|A$ is the polymatroid on $A$ with rank function $\rho|_{2^A}$, and the \textbf{contraction} $P/A$ is the polymatroid on $E\setminus A$ with rank function $X\mapsto\rho(X\cup A)-\rho(A)$. The \textbf{characteristic polynomial} of an integer polymatroid $P$ is defined as
$$p_P(z):=\sum\limits_{A\subseteq E}(-1)^{|A|}z^{\rho(E)-\rho(A)}.$$

We use the following matroid notions; see \cite{oxley2011matroid}.
Let $M=(E,r)$ be a matroid. A set $A\subseteq E$ is
\textbf{dependent} if $r(A)<|A|$, and a \textbf{circuit} is a
minimal dependent set. The \textbf{girth} $g(M)$ is the minimum
size of a circuit, with $g(M):=\infty$ if $M$ has no circuits.
The matroid $M$ is \textbf{simple} if it has no circuits of size
one or two, equivalently, if $g(M)\geq3$. A \textbf{flat} is a
set $F\subseteq E$ such that $r(F\cup\{x\})>r(F)$ for every
$x\in E\setminus F$. A matroid is \textbf{connected} if every
two distinct elements lie in a common circuit, and a flat $F$
is \textbf{connected} if the restriction $M|F$ is
connected. For a graph $\mG$ with vertex set $V$ and edge set $\mE$, the \textbf{cycle matroid}
$M(\mG)$ is the matroid on $\mE$ with
$r(A)=|V|-\kappa_{\mG}(A)$, where $\kappa_{\mG}(A)$ is the number of
connected components of the spanning subgraph $(V,A)$; see
\cite[Section~1.3]{oxley2011matroid}. We write $K_N$ for the
complete graph on the vertex set $[N]$ and, for $Y\subseteq[N]$, $\binom{Y}{2}$ for
the set of edges of $K_N$ with both ends in $Y$, that is, the edge set of the complete graph on $Y$.

In \cite{whittle1994critical}, Whittle uses the following notion of representability for polymatroids, which generalises representable matroids by allowing the elements of the ground set to be represented by subspaces of arbitrary dimension.

\begin{definition}\label{def:representable-polymatroid}
Given a field $\K$, an integer polymatroid $P=(E,\rho)$ is \textbf{$\K$-representable} if there are a finite-dimensional $\K$-vector space $W$ and subspaces $U_x\subseteq W$, $x\in E$, such that $\rho(A)=\dim_{\K}(\sum\limits_{x\in A}U_x)$ for every $A\subseteq E$. The family $(U_x)_{x\in E}$ is then a \textbf{$\K$-representation} of $P$.
\end{definition}

\subsection{The critical problem of projective geometries}

The \textbf{critical problem} of Crapo and Rota asks for the minimum number of hyperplanes needed to distinguish a given set of points of a finite projective space; see \cite[Chapter~16]{crapo1970foundations}. More precisely, let $S$ be a nonempty set of points of $\PG(k-1,q)$. Hyperplanes $\PG(\Lambda_1),\ldots,\PG(\Lambda_t)$ \textbf{distinguish} $S$ if each point of $S$ lies outside at least one of them, that is, if $$S\cap\PG(\Lambda_1)\cap\cdots\cap\PG(\Lambda_t)=\emptyset.$$
The \textbf{critical exponent} $\crit(S)$ of $S$ is the smallest $t$ for which such hyperplanes exist. For $k=1$, with the convention of the Notation above, the empty hyperplane distinguishes $S$, so $\crit(S)=1$ if $S\neq\emptyset$.

It is often convenient to work with linear forms rather than hyperplanes. We say that $\F_q$-linear forms $f_1,\ldots,f_t:\F_q^k\to\F_q$ \textbf{distinguish} $S$ if for every $\langle v\rangle\in S$ there is an $i$ with $f_i(v)\neq0$. Nonzero forms distinguish $S$ if and only if the hyperplanes $\PG(\ker f_1),\ldots,\PG(\ker f_t)$ do. Hence $\crit(S)$ is also the smallest number of linear forms distinguishing $S$. Since $\ker f_1\cap\cdots\cap\ker f_t$ has codimension at most $t$, $\crit(S)$ is also the minimum codimension of a subspace $W\subseteq\F_q^k$ such that $\PG(W)$ contains no point of $S$.

The critical problem depends only on the lattice $\mL(S)$ of subspaces of $\F_q^k$ spanned by subsets of $S$, where a set of points spans the vector subspace generated by their representatives \cite{crapo1970foundations}. Assume that $S$ spans $\F_q^k$, and let $\mu$ be the M\"obius function of~$\mL(S)$. The \textbf{characteristic polynomial} of $\mL(S)$ is
$$p_S(z)=\sum\limits_{x\in \mL(S)}\mu(0,x)z^{k-\dim x}.$$

Crapo and Rota provided a theoretical solution of the critical problem. Indeed, they proved that, for every integer $t\geq0$, the number of ordered $t$-tuples of linear forms on $\F_q^k$, zero forms included, that distinguish~$S$ is $p_S(q^t)$; see \cite[Chapter~16]{crapo1970foundations}. Hence $\crit(S)=\min\{t\geq0:p_S(q^t)>0\}$.

This has an interpretation in coding theory; see also \cite{dowling1971codes,greene1976weight}. Let $G$ be a $k\times n$ generator matrix of a linear $[n,k]_q$ code $\mC$ without zero columns, and let $S$ be the set of points represented by the columns of $G$. A linear form $f$ on $\F_q^k$ corresponds to the codeword $(f(g_1),\ldots,f(g_n))$, where $g_i$ are the columns of $G$, and $f_1,\ldots,f_t$ distinguish $S$ exactly when the supports of the corresponding codewords cover $[n]$. Therefore $\crit(S)$ is the smallest dimension of a subcode $\mD\subseteq\mC$ with $\supp(\mD)=[n]$.

\subsection{Additive codes and \texorpdfstring{$h$}{h}-projective systems}\label{subsec:additive}
We recall now additive codes and their correspondence with $h$-projective systems; see \cite{ball2025griesmer,ball2020additive,kurz2024additive}.

An $[n,k/h]_q^h$ \textbf{additive code} $\mC$ is a $k$-dimensional $\F_q$-linear subspace of~$\F_{q^h}^n$. The number $k/h=\log_{q^h}|\mC|$ is its \textbf{normalised dimension} and it need not be an integer (it plays the role of the dimension of $\mC$ over $\F_{q^h}$). For $v\in\F_{q^h}^n$ we write $\supp(v):=\{i\in[n]:v_i\neq0\}$ and $\wt(v):=|\supp(v)|$, and $\dd(u,v):=\wt(u-v)$ is the classical \textbf{Hamming distance}. The \textbf{minimum (Hamming) distance} of $\mC$ is $d(\mC):=\min\{\wt(c):c\in\mC\setminus\{0\}\}$. When $d=d(\mC)$ is known, we say that $\mC$ is an $[n,k/h,d]_q^h$ code.

A matrix $G\in\Mat_{k,n}(\F_{q^h})$ is a \textbf{generator matrix} of $\mC$ if its rows are $\F_q$-linearly independent and span $\mC$ over $\F_q$. For $i\in[n]$, the \textbf{$i$-th coordinate map} of $\mC$ is the $\F_q$-linear map $\varphi_i:\mC\to\F_{q^h}$, $c\mapsto c_i$. The code $\mC$ is \textbf{nondegenerate} if no $\varphi_i$ is zero, and \textbf{faithful} if every $\varphi_i$ is surjective; see~\cite{kurz2024additive}. Two additive codes are \textbf{equivalent} if one can be obtained from the other by permuting the coordinates and by applying an $\F_q$-linear bijection $\F_{q^h}\to\F_{q^h}$ in each coordinate.

Let $G\in\Mat_{k,n}(\F_{q^h})$ be a generator matrix of $\mC$ with columns $g_i=(g_{1,i},\ldots,g_{k,i})^\top\in\F_{q^h}^k$. Fix an ordered $\F_q$-basis $(\alpha_1,\ldots,\alpha_h)$ of $\F_{q^h}$ and write $g_{j,i}=\sum\limits_{t=1}^h b_{j,i}^t\alpha_t$ with $b_{j,i}^t\in\F_q$. This gives the \textbf{expanded generator matrix}
\begin{equation}\label{eq:Gtilde}
\widetilde{G}=(G_1\mid\cdots\mid G_n)\in\Mat_{k,nh}(\F_q),
\quad\text{where}\quad
G_i=\begin{pmatrix}
b^{1}_{1,i} & \cdots & b^{h}_{1,i}\\
\vdots & & \vdots\\
b^{1}_{k,i} & \cdots & b^{h}_{k,i}
\end{pmatrix}\in\Mat_{k,h}(\F_q).
\end{equation}
We refer to the matrices $G_i$ as the \textbf{blocks} of $\widetilde G$. The \textbf{expanded code}
$$\widetilde\mC:=\{a\widetilde G:a\in\F_q^k\}\subseteq\F_q^{hn}$$
is the image of $\mC$ under the coordinatewise expansion $\F_{q^h}\to\F_q^h$ with respect to $(\alpha_1,\ldots,\alpha_h)$. We regard $\F_q^{hn}$ as $(\F_q^h)^n$ and write $x=(x_1,\ldots,x_n)$ with $x_i\in\F_q^h$. Following \cite[Definition~1]{martinez2025folded}, the \textbf{folded Hamming weight} of $x$ is $\wt_{\rm F}(x):=|\{i\in[n]:x_i\neq0\}|$. If $x\in\widetilde\mC$ is the expansion of $c\in\mC$, then $x_i=0$ if and only if $c_i=0$, so $\wt_{\rm F}(x)=\wt(c)$.

The \textbf{dual} of $\mC$ is $\widetilde\mC^{\perp}:=\{x\in\F_q^{hn}:\widetilde Gx^\top=0\}$, the dual of $\widetilde\mC$ with respect to the standard inner product of $\F_q^{hn}$; see \cite[Definition~3]{martinez2025folded}. It has $\F_q$-dimension $hn-k$. We set $d^\perp(\mC):=\min\{\wt_{\rm F}(x):0\neq x\in\widetilde\mC^\perp\}$, with $d^\perp(\mC)=\infty$ if $\widetilde\mC^\perp=0$.
Another basis of $\F_{q^h}$ or another generator matrix changes $\widetilde\mC^\perp$ by invertible linear maps on the individual blocks $x_i$, which preserve folded Hamming weights, so $d^\perp(\mC)$ depends only on $\mC$, as shown in \cite[Proposition~5 and Corollary~7]{martinez2025folded}.

We now recall the geometric description of additive codes; see \cite{ball2020additive,ball2025griesmer}.
We set $U_i:=\colsp_{\F_q}(G_i)\subseteq\F_q^k$. For $a\in\F_q^k$, the $i$-th entry of the codeword $aG$ is $\sum\limits_t(aG_i)_t\alpha_t$, so it vanishes if and only if $aG_i=0$. Consequently $\dim U_i=\dim\im\varphi_i$. In particular, $\mC$ is nondegenerate if and only if every $U_i$ is nonzero, and faithful if and only if $\dim U_i=h$ for every~$i$.

For a nondegenerate code, $\pi_i:=\PG(U_i)$ is a projective subspace of $\PG(k-1,q)$ of projective dimension at most $h-1$, and we let $\mX_G(\mC)=\{\pi_1,\ldots,\pi_n\}$ be the resulting multiset associated with $G$. Another generator matrix changes this multiset by a projective linear transformation, so its equivalence class depends only on $\mC$. We write $\mX(\mC)$ for any representative. With the convention for $k=1$ fixed in the Notation, namely that the only hyperplane is the empty set $\PG(\{0\})$, all statements below also hold for $k=1$.

\begin{definition}[{\cite{ball2025griesmer}}]
Let $\mX=\{\pi_1,\ldots,\pi_n\}$ be a multiset of nonempty subspaces of $\PG(k-1,q)$, each of projective dimension at most $h-1$, not all contained in a common hyperplane. Then $\mX$ is an \textbf{$h$-$[n,k,d]_q$ projective system} if every hyperplane contains at most $n-d$ elements of $\mX$ and some hyperplane contains exactly $n-d$ elements. Two such systems are \textbf{equivalent} if they differ by an element of $\mathrm{PGL}_k(\F_q)$.
\end{definition}

\begin{theorem}[{\cite[Theorem~5]{ball2025griesmer}}]
There is a one-to-one correspondence between equivalence classes of nondegenerate $[n,k/h,d]_q^h$ codes and equivalence classes of $h$-$[n,k,d]_q$ projective systems.
\end{theorem}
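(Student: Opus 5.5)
The plan is to build the correspondence explicitly in both directions and then check that it respects the two equivalence relations and the parameters $n$, $k$ and $d$. Start with a nondegenerate $[n,k/h,d]_q^h$ code $\mC$ and a generator matrix $G$. Form the expanded matrix $\widetilde G$ as in \eqref{eq:Gtilde} and put $U_i=\colsp_{\F_q}(G_i)$. Nondegeneracy gives $U_i\neq0$, so $\pi_i=\PG(U_i)$ is a nonempty subspace of projective dimension at most $h-1$.

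The key identity comes from the computation before the definition: for $a\in\F_q^k$, the $i$-th entry of $aG$ vanishes if and only if $aG_i=0$, that is, if and only if $U_i\subseteq a^\perp$. Here $a^\perp$ denotes the kernel of the linear form $u\mapsto a\cdot u$. If $a\neq0$, then $a^\perp$ is a vector hyperplane, so
$$\wt(aG)=n-\bigl|\{i: \pi_i\subseteq \PG(a^\perp)\}\bigr|.$$
Every hyperplane has the form $\PG(a^\perp)$. Since the rows of $G$ are $\F_q$-independent, $a\mapsto aG$ is injective, so $aG\neq0$ for $a\neq0$. Hence no hyperplane contains all $\pi_i$, and the maximum number of $\pi_i$ in a hyperplane is exactly $n-d$. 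So $\mX_G(\mC)$ is an $h$-$[n,k,d]_q$ projective system.

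Next, check well-definedness on classes.
\begin{itemize}
\item Changing the generator matrix replaces $G$ by $MG$ with $M\in\mathrm{GL}_k(\F_q)$, which maps each $U_i$ to $MU_i$. This is a projectivity.
\item Changing the basis $(\alpha_t)$, or applying an $\F_q$-linear bijection of $\F_{q^h}$ in coordinate $i$, replaces $G_i$ by $G_iN_i$ with $N_i\in\mathrm{GL}_h(\F_q)$. This leaves $U_i$ unchanged.
\item Permuting coordinates only reorders the multiset.
\end{itemize}
So equivalent codes give equivalent systems.

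For the inverse, take an $h$-$[n,k,d]_q$ system $\{\PG(U_i)\}$. Choose for each $i$ a $k\times h$ matrix $G_i$ with column space $U_i$, padding with zero or repeated columns if $\dim U_i<h$. Read the rows of $(G_1|\cdots|G_n)$ back through the basis $(\alpha_t)$ to get $G\in\Mat_{k,n}(\F_{q^h})$. The rows of $G$ are $\F_q$-independent: a relation $aG=0$ with $a\neq0$ would put every $U_i$ inside $a^\perp$, which the definition of a projective system forbids. The same weight identity then gives minimum distance $d$, and $U_i\neq0$ gives nondegeneracy.

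Different choices of $G_i$ with the same column space differ by column operations, $G_i\mapsto G_iN_i$ with $N_i$ not necessarily invertible when $\dim U_i<h$. The main obstacle is this point: showing that two codes whose systems coincide (after a projectivity) are equivalent as codes. Writing $\varphi_i(aG)=aG_i$, read through $(\alpha_t)$, each coordinate map factors through $\F_q^k\to U_i^{*}$. So coordinate $i$ of the code, up to an injective linear map, is the map $a\mapsto (a\cdot u)_{u}$, restricted to $U_i$.

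Two codes $\mC,\mC'$ whose blocks have the same column space therefore have coordinate maps $\varphi_i,\varphi_i'$ with the same kernel. Their images $\im\varphi_i$ and $\im\varphi_i'$ have the same dimension, so there is an $\F_q$-linear bijection of $\F_{q^h}$ carrying $\varphi_i$ to $\varphi_i'$: define it on $\im\varphi_i$ via the common quotient and extend to any complement. Doing this in each coordinate gives the equivalence. A projectivity $M$ is absorbed by a change of generator matrix, and reorderings are absorbed by coordinate permutations. Hence the two constructions are mutually inverse on equivalence classes, which gives the bijection.
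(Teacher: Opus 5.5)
Your proof is correct and follows the same route as the paper's sketch after the statement: the weight identity $\wt(aG)=n-|\{i:\pi_i\subseteq\PG(\Lambda_a)\}|$ in one direction, and blocks $G_i$ with $\colsp_{\F_q}(G_i)=U_i$, padded by zero columns, in the other. You also supply the well-definedness and the injectivity on equivalence classes, which the paper leaves to the cited reference; that injectivity step can be shortened, because two $k\times h$ matrices with the same column space always satisfy $G_i'=G_iN_i$ for some invertible $N_i\in\mathrm{GL}_h(\F_q)$ (their transposes have the same row space and so are row-equivalent), and then your second bullet already gives the equivalence of the codes.
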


The correspondence can be explained as follows. For $a\in\F_q^k$ let
$$\Lambda_a:=\langle a\rangle^\perp=\{x\in\F_q^k:ax^\top=0\}\subseteq \F_q^{k},$$
which is a vector hyperplane if $a\neq0$. If $c=aG$ with $a\neq0$, then $c_i=0$ if and only if $\pi_i\subseteq\PG(\Lambda_a)$, so $\wt(c)=n-|\{i:\pi_i\subseteq\PG(\Lambda_a)\}|$. Conversely, from an $h$-$[n,k,d]_q$ projective system with $\pi_i=\PG(U_i)$, choose $G_i\in\Mat_{k,h}(\F_q)$ with $\colsp_{\F_q}(G_i)=U_i$, adding zero columns if $\dim U_i<h$. Then $\rowsp_{\F_q}(G_1\boldsymbol{\alpha}\mid\cdots\mid G_n\boldsymbol{\alpha})$, with $\boldsymbol{\alpha}=(\alpha_1,\ldots,\alpha_h)^\top$, is an $[n,k/h,d]_q^h$ code.

Every $[n,k/h,d]_q^h$ additive code satisfies the \textbf{Singleton bound} \cite{ball2025griesmer}:
\begin{equation}\label{eq:singleton}
d\leq n-\lceil k/h\rceil+1.
\end{equation}
 A code attaining it is called \textbf{quasi-MDS}, or \textbf{QMDS} according to \cite[Definition~9]{martinez2025folded}; see also \cite{bartoli2025long}. A QMDS code with $h\mid k$ is an \textbf{MDS} code, that is, $k=h(n-d+1)$. When $h\nmid k$, QMDS codes are the fractional MDS codes of \cite{ball2025griesmer}. Since $\widetilde\mC^\perp$ has $\F_q$-dimension $hn-k$, the Singleton bound for the folded Hamming weight applied to the dual gives
\begin{equation}\label{eq:dual-singleton}
d^\perp(\mC)\leq\lfloor k/h\rfloor+1
\end{equation}
whenever $\widetilde\mC^\perp\neq0$; see \cite[Proposition~8]{martinez2025folded}.

\subsection{Hypergraphs}\label{subsec:hypergraphs}

We use the following notions; see \cite{berge1989hypergraphs,bretto2013hypergraph} for hypergraphs and \cite{brouwer2012spectra} for spectra of graphs. A finite \textbf{hypergraph} is a triple $H=(V,\mE,\psi_H)$, where $V$ and $\mE$ are finite sets and $\psi_H:\mE\to2^V\setminus\{\emptyset\}$. The elements of $V$ are \textbf{vertices}, the elements of $\mE$ are \textbf{edges}, and $\psi_H(e)$ is the set of vertices of $e$. We write $N:=|V|$ and $m:=|\mE|$. When no confusion can arise we identify $e$ with $\psi_H(e)$. Note that distinct edges may have the same vertex set. The hypergraph $H$ is \textbf{$r$-uniform} if $|e|=r$ for every edge, \textbf{simple} if $\psi_H$ is injective, and \textbf{linear} if any two distinct edges have at most one common vertex. A $2$-uniform hypergraph is a graph (possibly with parallel edges). We write $K_N^{(r)}$ for the \textbf{complete $r$-uniform hypergraph}, whose edges are all $r$-subsets of $[N]$. The \textbf{degree} $\deg(v)$ of a vertex $v$ is the number of edges containing it. Let $\Delta(H)$ and $\delta_{\min}(H)$ denote the maximum and minimum degree, respectively. We say that $H$ is \textbf{regular of degree $\Delta$} if every vertex has degree $\Delta$.

\medskip\noindent\emph{Connectivity.} A \textbf{walk} from $u$ to $v$ is a sequence $u_0,e_1,u_1,\ldots,e_\ell,u_\ell$ with $u_0=u$, $u_\ell=v$ and $u_{i-1},u_i\in e_i$ for every $i$. Being joined by a walk is an equivalence relation on $V$, whose classes are the \textbf{connected components} of $H$, and $H$ is \textbf{connected} if it has one component \cite{berge1989hypergraphs}. For $A\subseteq \mE$, the \textbf{spanning subhypergraph} $H_A:=(V,A,\psi_H|_A)$ has $\kappa(A)$ components, isolated vertices included, and $\kappa(H):=\kappa(\mE)$. We write
$$V(A):=\bigcup_{e\in A}e.$$
For $\emptyset\neq S\subsetneq V$, the \textbf{edge cut} $\delta_H(S)$ is the set of edges meeting both $S$ and $V\setminus S$. The \textbf{edge-connectivity} $\lambda(H)$ of a connected hypergraph with $N\geq2$ is the minimum of $|\delta_H(S)|$ over all $\emptyset\neq S\subsetneq V$. The \textbf{incidence graph} of $H$ is the bipartite graph with vertex set $V\sqcup \mE$ in which $v$ is adjacent to $e$ when $v\in e$.

\medskip\noindent\emph{Berge cycles.} A \textbf{Berge cycle} of length $\ell\geq2$ is a sequence $e_1,v_1,e_2,v_2,\ldots,e_\ell,v_\ell$ of distinct edges $e_i$ and distinct vertices $v_i$ such that $v_i\in e_i\cap e_{i+1}$ for every $i$, where $e_{\ell+1}:=e_1$ \cite{berge1989hypergraphs}. Equivalently, it is a cycle of length $2\ell$ in the incidence graph. The \textbf{Berge girth} $g_{\rm B}(H)$ is the minimum length of a Berge cycle, with $g_{\rm B}(H)=\infty$ if there is none; see \cite{ellis2014regular}. Two distinct edges with two common vertices form a Berge cycle of length $2$, so $g_{\rm B}(H)\geq3$ if and only if $H$ is linear. For graphs, the Berge girth is the usual girth.

\medskip\noindent\emph{Colourings.} A \textbf{weak colouring} of $H$ with $s$ colours is a map $\gamma:V\to[s]$ such that no edge is \textbf{monochromatic}, that is, $\gamma$ is not constant on any edge; see \cite{erdos1966chromatic,berge1989hypergraphs}. We denote by $\chi_H^{\mathrm w}(s)$ the number of weak colourings with $s$ colours and by $\chi_{\mathrm w}(H)$ the \textbf{weak chromatic number}, that is, the least $s$ for which one exists. It is finite if and only if every edge has at least two vertices. Helgason \cite{helgason1974} showed that $\chi_H^{\mathrm w}$ is a polynomial, the \textbf{weak chromatic polynomial}; see also \cite{whittle1992hypergraph,dohmen1995broken,tomescu1998chromatic}. A \textbf{strong colouring} requires instead that any two vertices of a common edge receive different colours \cite{berge1989hypergraphs}. For graphs both notions agree with proper colourings. The \textbf{weak independence number} $\alpha_{\mathrm w}(H)$ is the maximum size of a set of vertices containing no edge.

\medskip\noindent\emph{The weighted $2$-section and its Laplacian.} The \textbf{codegree} of two distinct vertices $u,v$ is $a_{uv}:=|\{e\in \mE:u,v\in e\}|$. The \textbf{$2$-section} of $H$ is the graph on $V$ in which two distinct vertices are adjacent if they lie in a common edge \cite{berge1989hypergraphs,bretto2013hypergraph}. In the \textbf{weighted $2$-section}, the edge $\{u,v\}$ has weight $a_{uv}$. Its adjacency matrix is the codegree matrix $A_H=(a_{uv})$, with $a_{uu}:=0$. Let $D_H$ be the diagonal matrix with entries $\sum\limits_{v\neq u}a_{uv}$. The \textbf{Laplacian} of the weighted $2$-section is $L_H:=D_H-A_H$; see \cite{rodriguez2002laplacian,rodriguez2009laplacian}. It is symmetric and positive semidefinite, since $x^\top L_Hx=\sum\limits_{\{u,v\}}a_{uv}(x_u-x_v)^2$, and $L_H\mathbf 1=0$. Its \textbf{Laplacian spectrum} is the multiset of its eigenvalues $0=\mu_1\leq\mu_2\leq\cdots\leq\mu_N$. The second smallest eigenvalue $\mu_2$ is positive if and only if the weighted $2$-section is connected \cite{fiedler1973algebraic,brouwer2012spectra}, which is the case if and only if $H$ is connected.

\begin{example}[The Fano plane]\label{ex:fano}
Let $V=[7]$ and let $\mE$ consist of the seven lines of the Fano plane,
$$
\{1,2,3\},\ \{1,4,5\},\ \{1,6,7\},\ \{2,4,6\},\ \{2,5,7\},\ \{3,4,7\},\ \{3,5,6\}.
$$
This hypergraph $H$ is $3$-uniform, simple, linear and regular of degree $3$. Any two lines meet in one point, and $\{1,2,3\},2,\{2,4,6\},4,\{1,4,5\},1$ is a Berge cycle of length $3$, so $g_{\rm B}(H)=3$. Every pair of points lies on exactly one line, so the weighted $2$-section is $K_7$ with all weights equal to $1$, $L_H=7I-J$, and the Laplacian spectrum is $\{0,7,7,7,7,7,7\}$. Isolating a vertex cuts the three lines through it, so $\lambda(H)\leq3$. Conversely, let $\emptyset\neq S\subsetneq V$. Each of the $|S|(7-|S|)\geq6$ pairs $\{u,v\}$ with $u\in S$ and $v\notin S$ lies on exactly one line, and a line contains at most two such pairs, so $|\delta_H(S)|\geq3$. Hence $\lambda(H)=3$. Enumerating all colourings shows that $H$ has no weak colouring with two colours but has weak colourings with three colours, so $\chi_{\mathrm w}(H)=3$. More precisely,
$$
\chi_H^{\mathrm w}(z)=z(z-1)(z-2)(z^4+3z^3-6z+3).
$$
\end{example}

\begin{example}\label{ex:small-hypergraph}
Let $H_0$ be the $3$-uniform hypergraph on $V=[4]$ with edges $e_1=\{1,2,3\}$, $e_2=\{1,2,4\}$ and $e_3=\{1,3,4\}$. It is connected and not linear, and $e_1,1,e_2,2$ is a Berge cycle of length $2$, so $g_{\rm B}(H_0)=2$. Vertex $1$ has degree $3$ and the other vertices have degree $2$. Isolating vertex $2$ cuts $e_1,e_2$, and one checks that $\lambda(H_0)=2$. The codegrees are $a_{12}=a_{13}=a_{14}=2$ and $a_{23}=a_{24}=a_{34}=1$, and the Laplacian spectrum of the weighted $2$-section is $\{0,5,5,8\}$.
\end{example}

\section{Additive codes and the critical problem}\label{sec:polymatroid}

In this section we associate a representable polymatroid with an additive code, express the minimum distance and the dual distance through it, prove the Critical Theorem in this setting, and give bounds on the critical exponent.

\subsection{The polymatroid of an additive code}

For $A\subseteq[n]$ let
$$\F_{q^h}^n(A):=\{v\in\F_{q^h}^n:\supp(v)\subseteq[n]\setminus A\}$$
be the $\F_q$-subspace of vectors vanishing on $A$.

\begin{definition}
Let $\mC$ be an $[n,k/h]_q^h$ additive code and $A\subseteq[n]$. The \textbf{shortening} of $\mC$ by $A$ is $\mC(A):=\mC\cap\F_{q^h}^n(A)$, the $\F_q$-subspace of codewords vanishing on $A$. Unlike the usual shortening operation, we retain the zero coordinates indexed by $A$. We define $\rho_{\mC}:2^{[n]}\to\mathbb Z_{\geq0}$ by $\rho_{\mC}(A):=k-\dim_{\F_q}\mC(A)$.
\end{definition}

Clearly $\mC(\emptyset)=\mC$, $\mC([n])=0$, and $\mC(A)\supseteq\mC(B)$ whenever $A\subseteq B$.

\begin{theorem}
Let $\mC$ be an $[n,k/h]_q^h$ additive code. Then $P_{\mC}:=([n],\rho_{\mC})$ is an integer $h$-polymatroid with $\rho_{\mC}([n])=k$ and $\rho_{\mC}(\{i\})=\dim_{\F_q}\im\varphi_i$. In particular, $P_{\mC}$ has no loops if and only if $\mC$ is nondegenerate, and $\rho_{\mC}(\{i\})=h$ for all $i$ if and only if $\mC$ is faithful.
\end{theorem}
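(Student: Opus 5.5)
The plan is to show that $\rho_{\mC}$ is represented by the column spaces $U_i=\colsp_{\F_q}(G_i)$ of the blocks of an expanded generator matrix $\widetilde G$, in the sense of \cref{def:representable-polymatroid}. Then the polymatroid axioms follow at once from dimension formulas for subspaces, and the remaining claims can be read off.

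Fix a generator matrix $G$ and its expansion $\widetilde G$ as in \eqref{eq:Gtilde}. The map $a\mapsto aG$ is an $\F_q$-isomorphism $\F_q^k\to\mC$. As recalled in \cref{subsec:additive}, $(aG)_i=0$ if and only if $aG_i=0$, that is, if and only if $a\in U_i^\perp:=\{a: au=0\ \forall u\in U_i\}$. Hence, for $A\subseteq[n]$, $\mC(A)$ is the image of
$$\bigcap_{i\in A}U_i^\perp=\Bigl(\sum_{i\in A}U_i\Bigr)^\perp,$$
so $\dim\mC(A)=k-\dim\sum_{i\in A}U_i$. Therefore $\rho_{\mC}(A)=\dim_{\F_q}\sum_{i\in A}U_i$, with the empty sum equal to $0$.

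The axioms now follow.
\begin{itemize}
\item[($\rho$1)] $\rho_{\mC}(\emptyset)=0$ because the empty sum is $0$ (equivalently, $\mC(\emptyset)=\mC$).
\item[($\rho$2)] Monotonicity holds because $\sum_{i\in A}U_i\subseteq\sum_{i\in B}U_i$ whenever $A\subseteq B$.
\item[($\rho$3)] Put $X_A=\sum_{i\in A}U_i$. Then $X_{A\cup B}=X_A+X_B$ and $X_{A\cap B}\subseteq X_A\cap X_B$. The Grassmann formula gives
$$\rho(A\cup B)=\dim X_A+\dim X_B-\dim(X_A\cap X_B)\le\rho(A)+\rho(B)-\rho(A\cap B).$$
\end{itemize}
Integrality is clear since $\rho_{\mC}$ is a difference of dimensions.

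For the remaining claims:
\begin{itemize}
\item $\rho_{\mC}([n])=k-\dim\mC([n])=k$ because $\mC([n])=0$.
\item $\rho_{\mC}(\{i\})=\dim U_i=\dim\im\varphi_i$, as recalled in \cref{subsec:additive}. One can also see this directly from $\mC(\{i\})=\ker\varphi_i$ and rank–nullity.
\item Since $\im\varphi_i\subseteq\F_{q^h}$, which has $\F_q$-dimension $h$, every singleton has rank at most $h$, so $P_{\mC}$ is an $h$-polymatroid.
\item $P_{\mC}$ has no loops if and only if every $\varphi_i\neq0$, which is nondegeneracy.
\item $\rho_{\mC}(\{i\})=h$ for all $i$ if and only if every $\varphi_i$ is surjective, which is faithfulness.
\end{itemize}

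The only mildly delicate step is the identification $\mC(A)\cong(\sum_{i\in A}U_i)^\perp$. It uses that the rows of $G$ are $\F_q$-independent, so $a\mapsto aG$ is injective, and that $(aG)_i=0$ is equivalent to $aG_i=0$, which holds because $\alpha_1,\dots,\alpha_h$ is a basis. Alternatively, submodularity could be proved directly from $\mC(A\cup B)=\mC(A)\cap\mC(B)$ and $\mC(A\cap B)\supseteq\mC(A)+\mC(B)$ together with the Grassmann formula, with no need for representations.
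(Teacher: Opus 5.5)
Your proof is correct, but it orders the argument differently from the paper. The paper proves the axioms intrinsically from the shortened subcodes. It uses $\mC(A)\cap\mC(B)=\mC(A\cup B)$, $\mC(A)+\mC(B)\subseteq\mC(A\cap B)$ and Grassmann's identity, which is exactly the alternative you mention in your last sentence. It then gets $\rho_{\mC}(\{i\})=\dim_{\F_q}\im\varphi_i\leq h$ from $\ker\varphi_i=\mC(\{i\})$ and rank--nullity. The representation $\rho_{\mC}(A)=\dim_{\F_q}\sum_{i\in A}U_i$ comes only afterwards, as \cref{prop:representable-code-polymatroid}, by the same computation you carry out.

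You front-load that proposition, so the axioms reduce to the standard fact that $A\mapsto\dim\sum_{i\in A}U_i$ is a polymatroid rank function. The paper uses the same device later for $f_H$.

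The two submodularity arguments are dual to each other. Under $a\mapsto aG$ one has $\mC(A)\cong X_A^\perp$ with $X_A:=\sum_{i\in A}U_i$. Your inclusion $X_{A\cap B}\subseteq X_A\cap X_B$ is the orthogonal-complement form of $\mC(A)+\mC(B)\subseteq\mC(A\cap B)$, and $X_{A\cup B}=X_A+X_B$ corresponds to $\mC(A\cup B)=\mC(A)\cap\mC(B)$.

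The paper's version is coordinate-free and needs no generator matrix or basis of $\F_{q^h}$. Yours obtains representability at the same stroke, at the cost of fixing a choice of $G$. That cost is harmless, since $\rho_{\mC}$ is defined intrinsically and the resulting dimensions do not depend on the choice.
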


\begin{proof}
We have $\rho_{\mC}(\emptyset)=0$ and $\rho_{\mC}([n])=k$, and monotonicity follows from $\mC(A)\supseteq\mC(B)$ for $A\subseteq B$. For submodularity, let $A,B\subseteq[n]$. A codeword lies in $\mC(A)\cap\mC(B)$ if and only if it vanishes on $A\cup B$, so $\mC(A)\cap\mC(B)=\mC(A\cup B)$. Moreover $\mC(A)+\mC(B)\subseteq\mC(A\cap B)$. By Grassmann's identity,
$$
\dim\mC(A)+\dim\mC(B)=\dim(\mC(A)+\mC(B))+\dim\mC(A\cup B)\leq\dim\mC(A\cap B)+\dim\mC(A\cup B),
$$
which is equivalent to $\rho_{\mC}(A\cup B)+\rho_{\mC}(A\cap B)\leq\rho_{\mC}(A)+\rho_{\mC}(B)$. Finally, the kernel of the coordinate map $\varphi_i$ is $\mC(\{i\})$, so by the rank-nullity theorem $\rho_{\mC}(\{i\})=\dim_{\F_q}\im\varphi_i\leq h$.
\end{proof}

Let $G$ be a generator matrix of $\mC$, let $\widetilde G=(G_1\mid\cdots\mid G_n)$ be its expansion as in \eqref{eq:Gtilde}, and let $U_i=\colsp_{\F_q}(G_i)\subseteq\F_q^k$.

\begin{proposition}\label{prop:representable-code-polymatroid}
For every $A\subseteq[n]$ we have $\rho_{\mC}(A)=\dim_{\F_q}\left(\sum\limits_{i\in A}U_i\right)$. In particular, $P_{\mC}$ is represented by $(U_i)_{i\in[n]}$.
\end{proposition}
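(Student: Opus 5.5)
The plan is to identify $\mC(A)$ with a kernel of a linear map on $\F_q^k$ and then apply rank–nullity. Since the rows of $G$ are $\F_q$-linearly independent, the map $\F_q^k\to\mC$, $a\mapsto aG$, is an $\F_q$-linear isomorphism. So $\dim_{\F_q}\mC(A)$ equals the dimension of the subspace $\{a\in\F_q^k: aG\in\mC(A)\}$.

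The first step uses the observation already made in \cref{subsec:additive}. The $i$-th entry of $aG$ is $\sum_t (aG_i)_t\alpha_t$, and because $(\alpha_1,\ldots,\alpha_h)$ is an $\F_q$-basis, this entry vanishes if and only if $aG_i=0$. Hence
$$\{a\in\F_q^k: aG\in\mC(A)\}=\{a\in\F_q^k: aG_i=0 \text{ for all } i\in A\}.$$

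The second step rewrites this set. The condition $aG_i=0$ says that $a$ is orthogonal, under the standard bilinear form on $\F_q^k$, to every column of $G_i$, that is, $a\in U_i^\perp$. Therefore the set above is $\bigcap_{i\in A}U_i^\perp=\bigl(\sum_{i\in A}U_i\bigr)^\perp$. Equivalently, it is the left kernel of the $k\times h|A|$ matrix $G_A=(G_i)_{i\in A}$, whose column space is $\sum_{i\in A}U_i$.

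The last step is rank–nullity for this matrix, or $\dim W^\perp=k-\dim W$ for the nondegenerate standard form. It gives $\dim_{\F_q}\mC(A)=k-\dim\bigl(\sum_{i\in A}U_i\bigr)$, so $\rho_{\mC}(A)=\dim_{\F_q}\bigl(\sum_{i\in A}U_i\bigr)$. The case $A=\emptyset$ holds trivially, since both sides are $0$. The representability statement is then just \cref{def:representable-polymatroid} with $W=\F_q^k$.

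I do not expect a real obstacle. The only point needing care is the identification in the first step, which rests on the independence of the basis $\alpha_t$ and on the injectivity of $a\mapsto aG$.
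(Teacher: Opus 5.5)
Your proposal is correct and follows the paper's proof: the isomorphism $a\mapsto aG$ identifies $\mC(A)$ with $\bigl(\sum_{i\in A}U_i\bigr)^\perp$, and taking dimensions gives $\rho_{\mC}(A)=\dim_{\F_q}\sum_{i\in A}U_i$. Your added justifications, namely the independence of the $\alpha_t$ for the entrywise vanishing criterion and the dimension formula for the nondegenerate standard form, are exactly the facts the paper's shorter argument relies on implicitly.
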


\begin{proof}
The map $\F_q^k\to\mC$, $a\mapsto aG$, is an $\F_q$-linear isomorphism, and $aG\in\mC(A)$ if and only if $aG_i=0$ for every $i\in A$, that is, if and only if $a\in\left(\sum\limits_{i\in A}U_i\right)^\perp$. Hence $\dim_{\F_q}\mC(A)=k-\dim_{\F_q}\sum\limits_{i\in A}U_i$.
\end{proof}

\begin{example}\label{ex:small-code}
Let $\F_4=\F_2(\omega)$ with $\omega^2=\omega+1$, and let $\mC\subseteq\F_4^3$ be the $\F_2$-row space of
$$
G=\begin{pmatrix}1&0&1\\ \omega&1&\omega\\ 0&\omega&1\end{pmatrix}.
$$
With respect to the basis $(1,\omega)$, the blocks of $\widetilde G$ have column spaces $U_1=\langle\varepsilon_1,\varepsilon_2\rangle$, $U_2=\langle\varepsilon_2,\varepsilon_3\rangle$ and $U_3=\langle\varepsilon_1+\varepsilon_3,\varepsilon_2\rangle$ in $\F_2^3$, where $(\varepsilon_1,\varepsilon_2,\varepsilon_3)$ is the standard basis. Thus $\mC$ is a faithful $[3,3/2]_2^2$ code, and $\rho_{\mC}(A)=2$ if $|A|=1$ and $\rho_{\mC}(A)=3$ if $|A|\geq2$. The seven nonzero codewords are
$$
(1,0,1),\ (\omega,1,\omega),\ (0,\omega,1),\ (\omega^2,1,\omega^2),\ (1,\omega,0),\ (\omega,\omega^2,\omega^2),\ (\omega^2,\omega^2,\omega),
$$
with weights $2,3,2,3,2,3,3$, so $d(\mC)=2$. The code is not $\F_4$-linear, since $\omega\cdot(1,0,1)\notin\mC$.
\end{example}

The minimum distance and the dual distance of $\mC$ are determined by $P_{\mC}$.

\begin{lemma}\label{lem:distance-rank}
Let $\mC\subseteq\F_{q^h}^n$ be a nonzero additive code of $\F_q$-dimension $k$. Then
$$
d(\mC)=n-\max\{|A|:A\subseteq[n],\ \rho_{\mC}(A)<k\}.
$$
\end{lemma}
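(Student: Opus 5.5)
The plan is to translate the statement into a statement about the shortened subcodes $\mC(A)$, using only the definition $\rho_{\mC}(A)=k-\dim_{\F_q}\mC(A)$. The condition $\rho_{\mC}(A)<k$ is equivalent to $\dim_{\F_q}\mC(A)\geq1$, that is, to $\mC(A)\neq0$. So $\rho_{\mC}(A)<k$ holds exactly when some nonzero codeword vanishes on every coordinate in $A$. The claim then reads
$$d(\mC)=n-\max\{|A|:\mC(A)\neq0\},$$
and I will prove the two inequalities separately.

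First, a nonzero codeword of minimum weight gives a large admissible set. Let $c\in\mC\setminus\{0\}$ with $\wt(c)=d(\mC)$, and set $A:=[n]\setminus\supp(c)$. Then $c$ vanishes on $A$, so $c\in\mC(A)$ and $\mC(A)\neq0$. Hence $\rho_{\mC}(A)<k$, and $|A|=n-d(\mC)$. This shows that the maximum is at least $n-d(\mC)$.

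Conversely, an admissible set gives a light codeword. Let $A$ satisfy $\rho_{\mC}(A)<k$, and pick $0\neq c\in\mC(A)$. Then $\supp(c)\subseteq[n]\setminus A$, so $d(\mC)\leq\wt(c)\leq n-|A|$. Taking the maximum over all such $A$ gives $\max|A|\leq n-d(\mC)$.

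Since $\mC\neq0$, the set $A=\emptyset$ is admissible, because $\rho_{\mC}(\emptyset)=0<k$. So the maximum is taken over a nonempty family and is well defined. No step here is a real obstacle; the only care needed is this nonemptiness remark together with the equivalence $\rho_{\mC}(A)<k\iff\mC(A)\neq0$. \Cref{prop:representable-code-polymatroid} is not needed, although one could equivalently phrase the argument as $\sum\limits_{i\in A}U_i\neq\F_q^k$, i.e.\ all $\pi_i$ with $i\in A$ lie in a common hyperplane.
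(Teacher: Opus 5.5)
Your proof is correct and follows the paper's argument: both reduce $\rho_{\mC}(A)<k$ to $\mC(A)\neq0$ and then identify the maximum with $\max_{c\neq0}(n-\wt(c))=n-d(\mC)$. You additionally write out the two inequalities separately and check that the family is nonempty (using $k\geq1$), which the paper leaves implicit.
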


\begin{proof}
We have $\rho_{\mC}(A)<k$ if and only if $\mC(A)\neq0$, that is, if and only if some nonzero codeword vanishes on $A$. Hence the maximum is equal to  $\max\limits_{c\neq0}(n-\wt(c))=n-d(\mC)$.
\end{proof}

\begin{lemma}\label{lem:block-girth-rank}
For every additive code $\mC$,
$$
d^\perp(\mC)=\min\{|A|:A\subseteq[n],\ \rho_{\mC}(A)<h|A|\},
$$
where $\min\emptyset=\infty$. In particular, if $\mC$ is not faithful, $d^\perp(\mC)=1$.
\end{lemma}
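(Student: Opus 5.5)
The plan is to translate nonzero dual vectors into linear dependencies among the columns of the expanded generator matrix $\widetilde G=(G_1\mid\cdots\mid G_n)$. A vector $x=(x_1,\ldots,x_n)\in(\F_q^h)^n$ lies in $\widetilde\mC^\perp$ if and only if $\sum_{i=1}^n G_ix_i^\top=0$. Its folded support is $B:=\{i:x_i\neq0\}$, and $\wt_{\rm F}(x)=|B|$. So the statement reduces to the following claim: for a set $A\subseteq[n]$ there is $x\in\widetilde\mC^\perp$ with $x_i\neq0$ for all $i\in A$ and $x_i=0$ outside $A$ if and only if $\rho_{\mC}(A)<h|A|$, once we choose $A$ of minimum size. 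The tool for this is \cref{prop:representable-code-polymatroid}, which gives $\rho_{\mC}(A)=\dim\sum_{i\in A}U_i=\rank(G_i)_{i\in A}$, where $(G_i)_{i\in A}$ is the $k\times h|A|$ matrix obtained by concatenating the blocks indexed by $A$.

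First I show that $d^\perp(\mC)$ is at least the minimum. Let $x\neq0$ be in $\widetilde\mC^\perp$ with folded support $B$. Then the vector $(x_i)_{i\in B}$ is a nonzero kernel vector of the $k\times h|B|$ matrix $(G_i)_{i\in B}$. Hence the rank of this matrix is less than $h|B|$, that is, $\rho_{\mC}(B)<h|B|$. Therefore $B$ belongs to the set over which the minimum is taken, and the minimum is at most $|B|=\wt_{\rm F}(x)$.

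Next I show the reverse inequality. Let $A$ be a set of minimum size with $\rho_{\mC}(A)<h|A|$. The matrix $(G_i)_{i\in A}$ then has a nonzero kernel vector $(y_i)_{i\in A}$. Extending it by zeros outside $A$ gives $x\in\widetilde\mC^\perp\setminus\{0\}$ whose folded support is some $B\subseteq A$. The first step gives $\rho_{\mC}(B)<h|B|$. Minimality of $|A|$ then forces $B=A$, so $\wt_{\rm F}(x)=|A|$. In particular, if the set is empty then $\widetilde\mC^\perp=0$, and both sides equal $\infty$.

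For the last assertion, suppose $\mC$ is not faithful. Then some $i$ has $\rho_{\mC}(\{i\})=\dim\im\varphi_i<h$, so the singleton $A=\{i\}$ is admissible and the minimum equals $1$. The minimum is also at least $1$, since $\rho_{\mC}(\emptyset)=0$ is not less than $0$. Hence $d^\perp(\mC)=1$. The only mildly delicate point in the whole argument is the minimality step, which guarantees that the folded support of the kernel vector is exactly $A$ rather than a proper subset; it is resolved by the first inequality as above.
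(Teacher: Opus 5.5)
Your proof is correct and follows the same route as the paper: both identify nonzero vectors of $\widetilde\mC^\perp$ vanishing outside $A$ with nonzero kernel vectors of the concatenated block matrix $G_A$, whose rank is $\rho_{\mC}(A)$ by \cref{prop:representable-code-polymatroid}; you simply spell out the two inequalities that the paper leaves implicit. The minimality step you flag as delicate is not actually needed, since $\wt_{\rm F}(x)=|B|\leq|A|$ already gives $d^\perp(\mC)\leq|A|$.
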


\begin{proof}
Let $G_A$ be the submatrix of $\widetilde G$ formed by the blocks indexed by $A$. By \cref{prop:representable-code-polymatroid}, $\rank(G_A)=\rho_{\mC}(A)$. A nonzero $x\in\widetilde\mC^\perp$ with $x_i=0$ for all $i\notin A$ is the same as a nonzero vector in the kernel of $G_A$, and since $G_A$ has $h|A|$ columns, such a vector exists if and only if $\rho_{\mC}(A)<h|A|$.
\end{proof}

The two lemmas describe dual extremal problems on $P_{\mC}$: $n-d(\mC)$ is the largest size of a set $A$ with $\rho_{\mC}(A)<\rho_{\mC}([n])$, and $d^\perp(\mC)$ is the smallest size of a set $A$ with $\rho_{\mC}(A)<h|A|$. For $h=1$, where $P_{\mC}$ is the matroid of $\mC$, these are the classical facts that $n-d$ is the largest size of a hyperplane of the matroid and that $d^\perp$ is its girth. For the code of \cref{ex:small-code}, singletons have rank $2<3$ and $\rho_{\mC}(\{1,2\})=3<4$, so $d(\mC)=3-1=2$ and $d^\perp(\mC)=2$.

\subsection{The critical exponent}\label{sec:critical}

Throughout this subsection and the next, $\mC$ is a nondegenerate $[n,k/h]_q^h$ additive code and $P_{\mC}=([n],\rho_{\mC})$ is its polymatroid. The characteristic polynomial of $P_{\mC}$ is then
$$
p_{\mC}(z)=\sum\limits_{A\subseteq[n]}(-1)^{|A|}z^{k-\rho_{\mC}(A)}=\sum\limits_{A\subseteq[n]}(-1)^{|A|}z^{\dim_{\F_q}\mC(A)}.
$$

\begin{definition}\label{def:critical-code}
The \textbf{support} of an $\F_q$-subspace $\mD\subseteq\mC$ is $\supp(\mD):=\bigcup\limits_{c\in\mD}\supp(c)$. The \textbf{critical exponent} of $\mC$ is
$$
\crit(\mC):=\min\{\dim_{\F_q}\mD:\mD\subseteq\mC\text{ an $\F_q$-subspace with }\supp(\mD)=[n]\}.
$$
\end{definition}

Since the support of a subcode is the union of the supports of any set of generators, $\crit(\mC)$ is the smallest $t$ such that some codewords $c_1,\ldots,c_t$ have supports covering $[n]$. Equivalently, $\crit(\mC)$ is the least $f\geq1$ such that some $f$-dimensional subcode has full support. This should not be confused with the generalised Hamming weights of \cite[(5)]{d2026generalized}, which are defined by the \emph{smallest} support of an $f$-dimensional subcode.

We translate this into the language of projective systems. Let $G$, $G_i$, $U_i$, $\pi_i=\PG(U_i)$ and $\Lambda_a$ be as in \cref{subsec:additive}. For $c=aG$ we have $c_i=0$ if and only if $U_i\subseteq\Lambda_a$.

\begin{definition}\label{def:critical-projective-system}
Let $\mX=\{\pi_1,\ldots,\pi_n\}$ be a multiset of nonempty subspaces of $\PG(k-1,q)$. Hyperplanes $\PG(\Lambda_1),\ldots,\PG(\Lambda_t)$ \textbf{distinguish} $\mX$ if for every $i\in[n]$ some $\PG(\Lambda_j)$ does not contain $\pi_i$. The \textbf{critical exponent} $\crit(\mX)$ is the minimum $t\geq1$ for which $t$ distinguishing hyperplanes exist.
\end{definition}

For $k=1$ the empty hyperplane distinguishes $\mX$, and $\crit(\mX)=1$.

\begin{proposition}\label{prop:critical-geometric}
If $\mC$ is nondegenerate, then $\crit(\mC)=\crit(\mX(\mC))$.
\end{proposition}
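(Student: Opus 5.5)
The plan is to match $t$-tuples of codewords with full joint support to $t$-tuples of distinguishing hyperplanes, using the dictionary $c=aG\mapsto\Lambda_a$ from \cref{subsec:additive}. Fix a generator matrix $G$ with expanded blocks $G_i$ and $U_i=\colsp_{\F_q}(G_i)$, so that $\pi_i=\PG(U_i)$. As recalled before \cref{def:critical-projective-system}, for $c=aG$ we have $c_i\neq 0$ if and only if $U_i\not\subseteq\Lambda_a$. Nondegeneracy guarantees that each $\pi_i$ is nonempty, so $\mX(\mC)$ is well defined. Replacing $G$ by another generator matrix changes $\mX$ by a projective linear transformation, which preserves the critical exponent, so it is enough to work with this fixed $G$.

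\emph{Step 1: $\crit(\mX)\leq\crit(\mC)$.} Let $t=\crit(\mC)$ and choose $c_1,\ldots,c_t$ with $c_j=a_jG$ and $\bigcup_j\supp(c_j)=[n]$. Each $c_j$ may be taken nonzero, since a zero codeword could be dropped, contradicting minimality. Then $a_j\neq 0$, so $\Lambda_{a_j}$ is a vector hyperplane. For each $i$ there is a $j$ with $(c_j)_i\neq0$, so $\pi_i\not\subseteq\PG(\Lambda_{a_j})$. Hence $t$ hyperplanes distinguish $\mX$.

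\emph{Step 2: $\crit(\mC)\leq\crit(\mX)$.} Let $\PG(\Lambda_1),\ldots,\PG(\Lambda_t)$ distinguish $\mX$. Every vector hyperplane of $\F_q^k$ has the form $\Lambda_a$ for some nonzero $a$. For $k=1$ the only hyperplane is $\{0\}$, which is $\Lambda_a$ for any $a\neq0$. So we can write $\Lambda_j=\Lambda_{a_j}$ and set $c_j=a_jG$. For each $i$ some $j$ satisfies $U_i\not\subseteq\Lambda_{a_j}$, which means $(c_j)_i\neq0$. Thus the supports of $c_1,\ldots,c_t$ cover $[n]$, and they span a subcode of dimension at most $t$ with full support.

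\emph{Degenerate and edge cases.} Both minima are at least $1$. For $\crit(\mC)$ this holds because $n\geq1$ and the zero subcode has empty support; for $\crit(\mX)$ it holds by definition. The two steps together give equality.

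There is no serious obstacle here. The only points needing care are the convention for $k=1$ and the fact that the correspondence $a\mapsto\Lambda_a$ is onto the set of hyperplanes. Both follow from the Notation paragraph and the equivalence $c_i=0\iff U_i\subseteq\Lambda_a$ already established in the paper.
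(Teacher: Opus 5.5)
Your proof is correct and follows essentially the same route as the paper's: both discard zero codewords, use the dictionary $c=aG\leftrightarrow\PG(\Lambda_a)$, and apply the equivalence $c_i=0\iff\pi_i\subseteq\PG(\Lambda_a)$. The only difference is that you split the paper's single biconditional into two inequalities and state explicitly that every hyperplane has the form $\Lambda_a$ with $a\neq0$, including for $k=1$; the paper leaves this fact implicit.
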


\begin{proof}
Let $c_1,\ldots,c_t\in\mC$. Zero codewords do not contribute to the support, so we may assume that $c_j=a_jG$ with $a_j\neq0$ for $j\in[t]$, and then every $\PG(\Lambda_{a_j})$ is a hyperplane. Coordinate $i$ lies in the support of some $c_j$ if and only if some $\PG(\Lambda_{a_j})$ does not contain $\pi_i$. Hence the supports of $c_1,\ldots,c_t$ cover $[n]$ if and only if $\PG(\Lambda_{a_1}),\ldots,\PG(\Lambda_{a_t})$ distinguish $\mX(\mC)$.
\end{proof}

By \cref{prop:representable-code-polymatroid}, $P_{\mC}$ is $\F_q$-representable, so we can apply Whittle's Critical Theorem \cite[Theorem~3.1]{whittle1994critical}. We give a direct proof in our setting. For $S\subseteq[n]$ we write $P_{\mC}.S:=P_{\mC}/([n]\setminus S)$ for the contraction of $P_{\mC}$ to $S$.

\begin{theorem}[Critical Theorem]\label{thm:critical-additive}
Let $\mC$ be an $[n,k/h]_q^h$ additive code, let $t\geq1$ and $S\subseteq[n]$. Then
$$
\bigl|\{(c_1,\ldots,c_t)\in\mC^t:\supp(\langle c_1,\ldots,c_t\rangle_{\F_q})=S\}\bigr|=p_{P_{\mC}.S}(q^t).
$$
\end{theorem}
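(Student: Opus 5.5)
The plan is Möbius inversion on the Boolean lattice $2^{[n]}$, applied to a count of tuples indexed by the set of coordinates on which all entries vanish.

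For $T\subseteq[n]$ set
$$
g(T):=\bigl|\{(c_1,\ldots,c_t)\in\mC^t:\supp(\langle c_1,\ldots,c_t\rangle_{\F_q})=T\}\bigr|.
$$
Define also $f(T)$ as the number of $t$-tuples whose joint support is contained in $T$.

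The first step is to compute $f(T)$. The support of $\langle c_1,\ldots,c_t\rangle$ is $\bigcup_j\supp(c_j)$. Hence the condition "support contained in $T$" says exactly that every $c_j$ vanishes on $[n]\setminus T$, that is, every $c_j$ lies in $\mC([n]\setminus T)$. Therefore
$$
f(T)=|\mC([n]\setminus T)|^t=q^{t\dim_{\F_q}\mC([n]\setminus T)}=q^{t(k-\rho_{\mC}([n]\setminus T))}.
$$

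The second step is the inversion. We have $f(S)=\sum_{T\subseteq S}g(T)$, so part (2) of the Möbius inversion lemma, together with $\mu(T,S)=(-1)^{|S|-|T|}$ in the Boolean lattice, gives
$$
g(S)=\sum_{T\subseteq S}(-1)^{|S|-|T|}q^{t(k-\rho_{\mC}([n]\setminus T))}.
$$

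The final step is to match this with the characteristic polynomial of the contraction. Write $R:=[n]\setminus S$. The contraction $P_{\mC}.S=P_{\mC}/R$ has ground set $S$ and rank function $X\mapsto\rho_{\mC}(X\cup R)-\rho_{\mC}(R)$. In particular its total rank is $k-\rho_{\mC}(R)$. Hence
$$
p_{P_{\mC}.S}(z)=\sum_{X\subseteq S}(-1)^{|X|}z^{k-\rho_{\mC}(X\cup R)}.
$$
Substitute $X=S\setminus T$. Then $X\cup R=[n]\setminus T$ and $|X|=|S|-|T|$. Evaluating at $z=q^t$ reproduces exactly the formula for $g(S)$, which proves the theorem.

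No step is genuinely hard. The only point needing care is the bookkeeping in this last step: one must use the complement substitution $X=S\setminus T$ and check that the exponent $\rho_{\mC}(E)-\rho(X)$ of the contraction equals $k-\rho_{\mC}(X\cup R)$. One should also note that nondegeneracy is not needed here; loops are handled automatically, because a loop in $S$ makes the characteristic polynomial of the contraction vanish, matching $g(S)=0$.
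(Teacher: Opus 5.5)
Your proof is correct and is essentially the paper's argument: Möbius inversion of the counts $|\mC(\cdot)|^t$ over the Boolean lattice, followed by matching the result with the characteristic polynomial of the contraction. The paper indexes tuples by their common zero set $B\supseteq[n]\setminus S$ and inverts upward on the interval $[[n]\setminus S,[n]]$, whereas you index by the support $T\subseteq S$ and invert downward; after complementation these are the same computation, and your remark that nondegeneracy is not needed is also right.
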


\begin{proof}
Let $A:=[n]\setminus S$. For $B\supseteq A$, let $f(B)$ be the number of $t$-tuples in $\mC^t$ whose common zero set is exactly $B$, and $g(B)$ the number of those whose common zero set contains $B$. Then $g(B)=\sum\limits_{D\supseteq B}f(D)$, and $g(B)=|\mC(B)|^t=q^{t(k-\rho_{\mC}(B))}$, because a tuple has common zero set containing $B$ exactly when all its entries lie in $\mC(B)$. M\"obius inversion on the interval $[A,[n]]$ of the Boolean lattice gives
$$
f(A)=\sum\limits_{B\supseteq A}(-1)^{|B|-|A|}q^{t(k-\rho_{\mC}(B))}.
$$
The contraction $P_{\mC}/A$ has rank function $X\mapsto\rho_{\mC}(X\cup A)-\rho_{\mC}(A)$ and total rank $k-\rho_{\mC}(A)$, so the right-hand side is its characteristic polynomial evaluated at $q^t$. Finally, the common zero set of $(c_1,\ldots,c_t)$ is $A$ if and only if $\supp(\langle c_1,\ldots,c_t\rangle)=S$.
\end{proof}

\begin{corollary}\label{cor:critical-full-support}
If $\mC$ is nondegenerate, then for every $t\geq1$ the number of ordered $t$-tuples in $\mC^t$ whose supports cover $[n]$ is $p_{\mC}(q^t)$. Consequently, $\crit(\mC)=\min\{t\geq1:p_{\mC}(q^t)>0\}$.
\end{corollary}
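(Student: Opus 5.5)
The plan is to specialise \cref{thm:critical-additive} to $S=[n]$ and then read off the critical exponent. With $S=[n]$ we contract by $[n]\setminus S=\emptyset$, so $P_{\mC}.[n]=P_{\mC}/\emptyset$. Its rank function is $X\mapsto\rho_{\mC}(X)-\rho_{\mC}(\emptyset)=\rho_{\mC}(X)$, hence $P_{\mC}.[n]=P_{\mC}$ and $p_{P_{\mC}.[n]}=p_{\mC}$.

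Next I match the set being counted. The support of $\langle c_1,\ldots,c_t\rangle_{\F_q}$ equals the union $\supp(c_1)\cup\cdots\cup\supp(c_t)$. This holds because $\supp(\sum\lambda_jc_j)\subseteq\bigcup_j\supp(c_j)$, while each $c_j$ lies in the span. So the tuples with $\supp(\langle c_1,\ldots,c_t\rangle)=[n]$ are exactly the ordered $t$-tuples whose supports cover $[n]$. \Cref{thm:critical-additive} then shows that their number is $p_{\mC}(q^t)$. Nondegeneracy is not needed for the count itself. It guarantees that covering tuples can exist at all: for $t=k$, a basis of $\mC$ covers $[n]$ since no $\varphi_i$ vanishes. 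Hence the minimum below is over a nonempty set.

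For the consequence, I use the remark after \cref{def:critical-code}: $\crit(\mC)$ is the least $t\ge1$ such that some $c_1,\ldots,c_t\in\mC$ have supports covering $[n]$. Such a tuple exists if and only if the count $p_{\mC}(q^t)$ is positive. This gives $\crit(\mC)=\min\{t\geq1:p_{\mC}(q^t)>0\}$.

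There is no serious obstacle here. The only point needing care is the identification of the support of the span with the union of supports, together with the trivial contraction. Both steps are immediate.
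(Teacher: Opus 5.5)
Your proposal is correct and follows the paper's proof: specialise \cref{thm:critical-additive} to $S=[n]$, where the contraction by $\emptyset$ gives $P_{\mC}.[n]=P_{\mC}$, and then read off $\crit(\mC)$. Your two extra remarks are both right. The first identifies the support of the span with the union of the supports. The second uses nondegeneracy to show that a basis of $\mC$ covers $[n]$, so the minimum is taken over a nonempty set. The paper leaves both implicit.
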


\begin{proof}
Apply \cref{thm:critical-additive} with $S=[n]$, for which $P_{\mC}.[n]=P_{\mC}$.
\end{proof}

\begin{example}\label{ex:small-code-crit}
For the code of \cref{ex:small-code}, $p_{\mC}(z)=z^3-3z+2=(z-1)^2(z+2)$. Thus $p_{\mC}(2)=4$, which is the number of codewords of weight $3$, and $\crit(\mC)=1$.
\end{example}

The geometric formulation has an equivalent form in terms of a single subspace.

\begin{proposition}\label{prop:crit-codimension}
We have
$$
\crit(\mC)=\min\{\codim W:W\subseteq\F_q^k\text{ a subspace with }U_i\not\subseteq W\text{ for every }i\in[n]\}.
$$
\end{proposition}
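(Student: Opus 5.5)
We prove the two inequalities separately, working with the identification $c=aG\leftrightarrow a\in\F_q^k$ and the fact, recalled before \cref{def:critical-projective-system}, that for $c=aG$ we have $c_i=0$ if and only if $U_i\subseteq\Lambda_a=\langle a\rangle^\perp$. Let $m$ denote the minimum on the right-hand side. The main tool is duality of subspaces of $\F_q^k$ under the standard bilinear form: for vectors $a_1,\ldots,a_t$, the intersection $\Lambda_{a_1}\cap\cdots\cap\Lambda_{a_t}$ is $\langle a_1,\ldots,a_t\rangle^\perp$, whose codimension is $\dim\langle a_1,\ldots,a_t\rangle\le t$.

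For $m\leq\crit(\mC)$, take codewords $c_j=a_jG$, $j\in[t]$, with $t=\crit(\mC)$ and supports covering $[n]$. Put $W:=\bigcap_{j}\Lambda_{a_j}$. If some $U_i\subseteq W$, then $U_i\subseteq\Lambda_{a_j}$ for all $j$, so every $c_j$ vanishes at $i$, which is a contradiction. Hence $W$ is admissible and $\codim W\leq t$.

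For $\crit(\mC)\leq m$, take an admissible $W$ of codimension $s=m$. Choose a basis $a_1,\ldots,a_s$ of $W^\perp$, so that $W=\bigcap_j\Lambda_{a_j}$. For each $i$ we have $U_i\not\subseteq W$, so some $j$ has $U_i\not\subseteq\Lambda_{a_j}$, that is, $(a_jG)_i\neq0$. Therefore the codewords $a_1G,\ldots,a_sG$ cover $[n]$, and $\crit(\mC)\leq s$.

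The only delicate point is the degenerate case $s=0$, i.e.\ $W=\F_q^k$, since $\crit$ is defined as a minimum over $t\ge1$. Here $W=\F_q^k$ is admissible only if $n=0$, or, for nondegenerate codes with $n\ge1$, never, because $U_i\subseteq\F_q^k$ always. So $m\ge1$ whenever $n\ge1$, and the degenerate case causes no mismatch. In the same way, the convention for $k=1$ (where $W=\{0\}$ has codimension $1$ and $\crit=1$) is consistent with the argument. Alternatively, one could deduce the statement directly from \cref{prop:critical-geometric} together with the same hyperplane–intersection duality.
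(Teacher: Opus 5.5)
Your proof is correct and follows the paper's argument. The paper also intersects the hyperplanes coming from a covering family to get an admissible $W$ of codimension at most $t$, and conversely writes an admissible $W$ of codimension $s$ as the intersection of the kernels of a basis of its annihilator; the only difference is that the paper phrases this through distinguishing hyperplanes of $\mX(\mC)$ and then invokes \cref{prop:critical-geometric}, whereas you inline that proposition by working directly with the vectors $a_j$ and the codewords $a_jG$.
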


\begin{proof}
For a subspace $W\subseteq\F_q^k$ we have $\pi_i\subseteq\PG(W)$ if and only if $U_i\subseteq W$. If the hyperplanes $\PG(\Lambda_1),\ldots,\PG(\Lambda_t)$ distinguish $\mX(\mC)$, then no $\pi_i$ is contained in $\PG(\Lambda_1\cap\cdots\cap\Lambda_t)$, so $W:=\Lambda_1\cap\cdots\cap\Lambda_t$ contains no $U_i$, and $\codim W\leq t$. Conversely, let $W$ have codimension $s$ and contain no $U_i$. Since every $U_i$ is nonzero, $W\neq\F_q^k$, so $s\geq1$. Write $W=\Lambda_1\cap\cdots\cap\Lambda_s$ with vector hyperplanes $\Lambda_j$, for instance the kernels of a basis of the annihilator of $W$. No $U_i$ is contained in all the $\Lambda_j$, so for every $i$ some $\PG(\Lambda_j)$ does not contain $\pi_i$. Hence $\PG(\Lambda_1),\ldots,\PG(\Lambda_s)$ distinguish $\mX(\mC)$, and $\crit(\mX(\mC))\leq s$. The claim follows from \cref{prop:critical-geometric}.
\end{proof}

Following \cite[Definition~1.1]{d2026generalized}, say that $\mX(\mC)$ \textbf{blocks} a subspace $W\subseteq\F_q^k$ if some $U_i$ is contained in $W$. Every subspace of an unblocked subspace is also unblocked. Thus, if the minimum codimension of an unblocked subspace is $t$, every subspace of codimension less than $t$ is blocked. By \cref{prop:crit-codimension}, this gives
$$
\crit(\mC)=1+\max\{f\geq0:\mX(\mC)\text{ blocks every subspace of codimension $f$}\}.
$$

For $i\in[n]$ let $\rho_i:=\rho_{\mC}(\{i\})=\dim_{\F_q}\im\varphi_i$. Since $\mC$ is nondegenerate, we have $1\leq\rho_i\leq h$. Let $\rho_{\min}:=\min\limits_i\rho_i$ and $w_{\max}(\mC):=\max\limits_{c\in\mC}\wt(c)$.

\begin{proposition}\label{prop:crit-general-bounds}
Let $\mC$ be a nondegenerate additive code of $\F_q$-dimension $k$.
Let $t_0:=\min\{t\geq1:\sum\limits_{i=1}^nq^{-t\rho_i}<1\}$.
Then
$$
\begin{aligned}
\left\lceil\frac{n}{w_{\max}(\mC)}\right\rceil
\leq\crit(\mC)
&\leq\min\{k-\rho_{\min}+1,t_0\}\\
&\leq\min\left\{k-\rho_{\min}+1,
\left\lfloor\frac{\log_q n}{\rho_{\min}}\right\rfloor+1\right\}.
\end{aligned}
$$
\end{proposition}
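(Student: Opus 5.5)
The plan is to prove the four inequalities separately, using the subcode description of $\crit(\mC)$ from \cref{def:critical-code}, the codimension description of \cref{prop:crit-codimension}, and a counting argument.

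\emph{Lower bound.} Let $t=\crit(\mC)$ and pick codewords $c_1,\ldots,c_t$ whose supports cover $[n]$. Then
$$n\leq\sum_j\wt(c_j)\leq t\,w_{\max}(\mC),$$
and integrality of $t$ gives $t\geq\lceil n/w_{\max}(\mC)\rceil$.

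\emph{Bound $k-\rho_{\min}+1$.} By \cref{prop:crit-codimension} it suffices to exhibit a subspace $W$ of codimension $k-\rho_{\min}+1$ containing no $U_i$. Any $W$ with $\dim W=\rho_{\min}-1$ works, since every $U_i$ has dimension $\rho_i\geq\rho_{\min}>\dim W$. If $\rho_{\min}=1$ we take $W=0$, which has codimension $k$ and contains no nonzero $U_i$ because $\mC$ is nondegenerate.

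\emph{Bound $t_0$.} This is a first-moment argument via \cref{cor:critical-full-support}. Choose $(a_1,\ldots,a_t)\in(\F_q^k)^t$ uniformly at random, or equivalently $t$ codewords $c_j=a_jG$. Coordinate $i$ is uncovered exactly when every $a_j$ lies in $U_i^\perp$, which has probability $q^{-t\rho_i}$. By the union bound, the probability that some coordinate is uncovered is at most $\sum_i q^{-t\rho_i}$. If this sum is $<1$, some $t$-tuple covers $[n]$, so $\crit(\mC)\leq t$; taking $t=t_0$ gives the bound. Alternatively one can count: the tuples failing to cover number at most $\sum_i q^{t(k-\rho_i)}<q^{tk}$, so $p_{\mC}(q^t)>0$.

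\emph{Last inequality.} We need $t_0\leq\lfloor\log_q n/\rho_{\min}\rfloor+1$. Set $t=\lfloor\log_q n/\rho_{\min}\rfloor+1$. Then $t\rho_{\min}>\log_q n$, so $q^{t\rho_{\min}}>n$ and
$$\sum_i q^{-t\rho_i}\leq n\,q^{-t\rho_{\min}}<1.$$
Since $t\geq1$, the definition of $t_0$ as a minimum gives $t_0\leq t$. Taking minima with the common term $k-\rho_{\min}+1$ preserves the inequality.

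None of the steps is a serious obstacle. The only care needed is in the probabilistic step, namely checking that the event ``$c_i=0$'' is exactly $a\in U_i^\perp$, a subspace of codimension $\rho_i$, which follows from the proof of \cref{prop:representable-code-polymatroid}. One should also note that $t_0$ is finite, which the last computation shows.
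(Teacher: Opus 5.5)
Your proposal is correct and follows the paper's proof essentially step for step. The paper uses the same covering count $n\leq t\,w_{\max}(\mC)$, the same subspace of dimension $\rho_{\min}-1$ together with \cref{prop:crit-codimension}, the same union-bound count $\sum_i|\mC(\{i\})|^t=q^{tk}\sum_iq^{-t\rho_i}<q^{tk}$ for $t_0$ (your probabilistic phrasing is just this count normalised), and the same estimate $\sum_iq^{-t\rho_i}\leq nq^{-t\rho_{\min}}<1$ for the last inequality.
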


\begin{proof}
If $c_1,\ldots,c_t$ is a basis of a subcode with full support,
then
$n\leq\sum\limits_{j=1}^t\wt(c_j)\leq t\,w_{\max}(\mC)$.
This gives the lower bound.
Every $U_i$ has dimension $\rho_i\geq\rho_{\min}$.
A subspace $W\subseteq\F_q^k$ of dimension
$\rho_{\min}-1$ therefore contains none of the $U_i$.
By \cref{prop:crit-codimension},
$\crit(\mC)\leq\codim W=k-\rho_{\min}+1$.
For the other upper bound, fix $t\geq1$. A tuple
$(c_1,\ldots,c_t)\in\mC^t$ fails to have full joint
support only if all its entries vanish at some coordinate
$i$. Since $|\mC(\{i\})|=q^{k-\rho_i}$, the number of
such tuples is at most
$$
\sum\limits_{i=1}^n|\mC(\{i\})|^t
=q^{tk}\sum\limits_{i=1}^nq^{-t\rho_i}.
$$
If $\sum\limits_{i=1}^nq^{-t\rho_i}<1$, this is smaller than
$|\mC^t|=q^{tk}$. Hence some $t$-tuple has full joint
support, and its span has dimension at most $t$.
Finally,
$\sum\limits_{i=1}^nq^{-t\rho_i}\leq nq^{-t\rho_{\min}}<1$
whenever $t>\log_q(n)/\rho_{\min}$.
\end{proof}

\begin{remark}\label{rem:wmax}
The lower bound of \cref{prop:crit-general-bounds} only detects whether $\mC$ has a codeword of full weight. Indeed, a uniformly random codeword $c\in\mC$ satisfies $c_i\neq0$ with probability $1-|\mC(\{i\})|/|\mC|=1-q^{-\rho_i}\geq1-1/q$, so the average weight of a codeword is at least $n(1-1/q)\geq n/2$. Hence $w_{\max}(\mC)\geq n/2$, and $\lceil n/w_{\max}(\mC)\rceil=\min\{\crit(\mC),2\}$. Better lower bounds for hypergraphic codes are given in \cref{cor:crit-hypergraph-bounds}.
\end{remark}

\begin{example}\label{ex:crit-general-sharp}
The bound $\crit(\mC)\leq k-h+1$ for faithful codes is sharp.
Let $k\geq h$, and take as coordinate subspaces all the
$h$-dimensional subspaces of $\F_q^k$. Every subspace of
dimension at least $h$ contains one of these coordinate
subspaces, whereas a subspace of dimension $h-1$ contains
none. Hence \cref{prop:crit-codimension} gives
$\crit(\mC)=k-h+1$.
\end{example}

\subsection{A Kung-type bound}\label{subsec:kung-bound}

In \cite{kung1996critical}, Kung bounded the critical exponent of linear codes in terms of the girth of the associated matroid. Let $M$ be a simple matroid of rank $k$ represented over $\F_q$ by a set $S$ of points of $\PG(k-1,q)$. By the Critical Theorem of Crapo and Rota, $\crit(S)$ depends only on $M$ and $q$, and we denote it by $\crit(M;q)$. Kung proved that $\crit(M;q)\leq k-g(M)+3$ in \cite{kung1996critical}. We extend this bound to additive codes. The girth is replaced by the dual distance $d^\perp(\mC)$. By \cref{lem:block-girth-rank}, it is the smallest size of a set $A$ of blocks for which the concatenated matrix $G_A$ has rank less than $h|A|$.

\begin{theorem}[Kung-type bound]\label{thm:kung-block}
Let $\mC\subseteq\F_{q^h}^n$ be a faithful additive code of $\F_q$-dimension~$k$, and let $d^\perp:=d^\perp(\mC)$. If $3\leq d^\perp<\infty$, then $\crit(\mC)\leq k-h(d^\perp-1)+2$.
\end{theorem}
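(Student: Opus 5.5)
The plan is to use \cref{prop:crit-codimension}. It suffices to exhibit a subspace $W\subseteq\F_q^k$ with $\dim W=h(d^\perp-1)-2$ that contains none of the coordinate subspaces $U_i$. Then $\codim W=k-h(d^\perp-1)+2$, which is the claimed bound. Write $d:=d^\perp$.

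\emph{Step 1: a direct sum of $d-1$ blocks.} Since $\mC$ is faithful, $\dim U_i=h$ for every $i$. By \cref{lem:block-girth-rank}, every set $A$ of at most $d-1$ coordinates satisfies $\rho_{\mC}(A)=h|A|$, so the corresponding $U_i$ form a direct sum. Since $d<\infty$, we have $d-1<n$, and we may pick $d-1$ coordinates, say $1,\ldots,d-1$. We set
$$
V:=U_1\oplus\cdots\oplus U_{d-1},\qquad \dim V=h(d-1)\leq k .
$$
Every $x\in V$ decomposes uniquely as $x=x_1+\cdots+x_{d-1}$ with $x_i\in U_i$. We write $\mathrm{pr}_i:V\to U_i$ for the projections.

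\emph{Step 2: the two linear forms.} Because $d\geq3$, there are at least two summands. Choose nonzero linear forms $\varphi$ on $U_1$ and $\psi_i$ on $U_i$ for $2\leq i\leq d-1$. On $V$ define
$$
f_1(x):=\varphi(x_1),\qquad f_2(x):=\sum_{i=2}^{d-1}\psi_i(x_i).
$$
These two forms are linearly independent, since they depend on disjoint summands. Put $W:=\ker f_1\cap\ker f_2\subseteq V$, so that $\dim W=h(d-1)-2$.

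\emph{Step 3: no $U_j$ lies in $W$.} We check three kinds of coordinates $j$.
\begin{enumerate}
\item For $j=1$, $f_1$ does not vanish on $U_1$.
\item For $2\leq j\leq d-1$, $f_2$ restricted to $U_j$ equals $\psi_j\neq0$.
\item For $j\geq d$, if $U_j\not\subseteq V$ then certainly $U_j\not\subseteq W$.
\end{enumerate}
This leaves the main point: a coordinate $j\geq d$ with $U_j\subseteq V$. Here the hypothesis $d\geq3$ enters again. The $d-1$ blocks $U_j,U_2,\ldots,U_{d-1}$ are independent by Step 1, so $U_j\cap(U_2\oplus\cdots\oplus U_{d-1})=0$. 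Hence $\mathrm{pr}_1|_{U_j}:U_j\to U_1$ is injective, and therefore an isomorphism since both spaces have dimension $h$. Consequently $f_1|_{U_j}=\varphi\circ\mathrm{pr}_1|_{U_j}$ is a nonzero form, so $U_j\not\subseteq\ker f_1\supseteq W$.

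Thus $W$ contains no $U_i$, and \cref{prop:crit-codimension} gives $\crit(\mC)\leq k-h(d-1)+2$. The only delicate step is the last case of Step 3, namely the blocks lying inside $V$. I expect it to be the main obstacle, and it is resolved by the injectivity of the projection onto a single summand. For $h=1$ the argument reduces to Kung's bound.
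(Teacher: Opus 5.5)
Your proof is correct and is essentially the paper's argument. Your subspace $W=\ker\varphi\oplus\bigl(\ker f_2\cap(U_2\oplus\cdots\oplus U_{d-1})\bigr)$ is exactly the paper's $W_1=W_0+T$ with $B=\{2,\ldots,d-1\}$ and $j_0=1$. The difference is only in checking blocks outside $B$: you use $f_1$ and the projection $\mathrm{pr}_1$, while the paper uses $U_j\cap W_0=0$ and $\dim W_1/W_0=h-1<h$, which avoids your case split on whether $U_j\subseteq V$.
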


\begin{proof}
By \cref{prop:crit-codimension}, it suffices to find a subspace of $\F_q^k$ of dimension $h(d^\perp-1)-2$ that contains none of $U_1,\ldots,U_n$. Since $d^\perp<\infty$, \cref{lem:block-girth-rank} gives $n\geq d^\perp$. Choose $B\subseteq[n]$ with $|B|=d^\perp-2$, which is nonempty since $d^\perp\geq3$, and choose $j_0\in[n]\setminus B$. For every $j\notin B$ the set $B\cup\{j\}$ has $d^\perp-1<d^\perp$ elements, so \cref{lem:block-girth-rank,prop:representable-code-polymatroid} give
$$
\dim\Bigl(U_j+\sum\limits_{i\in B}U_i\Bigr)=\rho_{\mC}(B\cup\{j\})=h(d^\perp-1)=\dim U_j+\sum\limits_{i\in B}\dim U_i.
$$
Hence the sum $W:=\sum\limits_{i\in B}U_i=\bigoplus\limits_{i\in B}U_i$ is direct, $\dim W=h(d^\perp-2)$, and $U_j\cap W=0$ for every $j\notin B$. Choose a nonzero linear form $\ell_i$ on $U_i$ for each $i\in B$. Since the sum is direct, there is a unique linear form $\ell$ on $W$ whose restriction to $U_i$ is $\ell_i$ for every $i\in B$. As $B\neq\emptyset$, $\ell\neq0$, so $W_0:=\ker\ell$ has dimension $h(d^\perp-2)-1$. Let $T$ be a hyperplane of $U_{j_0}$, and set $W_1:=W_0+T$. Since $T\subseteq U_{j_0}$ and $U_{j_0}\cap W=0$, the sum $W_0+T$ is direct, so $\dim W_1=h(d^\perp-1)-2$, and $W_1\cap W=W_0+(T\cap W)=W_0$.

Let $i\in B$. If $U_i\subseteq W_1$, then $U_i\subseteq W_1\cap W=W_0=\ker\ell$, which contradicts $\ell|_{U_i}=\ell_i\neq0$. Let $j\notin B$. Then $U_j\cap W_0\subseteq U_j\cap W=0$, so the projection $W_1\to W_1/W_0$ is injective on $U_j\cap W_1$. Since $\dim W_1/W_0=h-1<\dim U_j$, we get $U_j\not\subseteq W_1$. Hence $W_1$ contains none of $U_1,\ldots,U_n$, and \cref{prop:crit-codimension} gives $\crit(\mC)\leq\codim W_1=k-h(d^\perp-1)+2$.
\end{proof}

For a faithful code, $\rho_{\min}=h$, so \cref{prop:crit-general-bounds} gives $\crit(\mC)\leq k-h+1$.
If $d^\perp(\mC)\geq3$, \cref{thm:kung-block} improves this by $h(d^\perp(\mC)-2)-1$, which is positive
unless $h=1$ and $d^\perp(\mC)=3$. By \cref{lem:block-girth-rank}, any $d^\perp(\mC)-1$ blocks have rank
$h(d^\perp(\mC)-1)$, so $k\geq h(d^\perp(\mC)-1)$ and the bound of \cref{thm:kung-block} is always at
least~$2$. It is attained for every $h$ and $q$ by the codes of spreads (see \cref{ex:spread} below), and for
$h\geq2$ and $d^\perp(\mC)=3$ these are the only codes attaining it.

\begin{example}\label{ex:spread}
Let $\mathcal S$ be a spread of $\F_q^{2h}$, that is, a set of $q^h+1$ subspaces of dimension $h$ that pairwise intersect trivially, for instance a Desarguesian spread. Let $\mC$ be the faithful $[q^h+1,2,q^h]_q^h$ code with $\mX(\mC)=\{\PG(U):U\in\mathcal S\}$; for a Desarguesian spread, $\mC$ is the $\F_{q^h}$-linear $[q^h+1,2,q^h]_{q^h}$ code whose columns represent the points of $\PG(1,q^h)$. Any two elements of $\mathcal S$ span $\F_q^{2h}$, so \cref{lem:block-girth-rank} gives $d^\perp(\mC)=3$. A hyperplane $\Lambda$ of $\F_q^{2h}$ meets every $U\in\mathcal S$ in dimension $h$ or $h-1$. Since the elements of $\mathcal S$ partition the nonzero vectors, counting the points of $\PG(\Lambda)$ shows that exactly one element of $\mathcal S$ is contained in $\Lambda$. Hence every hyperplane is blocked, so $\crit(\mC)\geq2$ by \cref{prop:crit-codimension}, and \cref{thm:kung-block} gives $\crit(\mC)\leq2h-2h+2=2$. Thus $\crit(\mC)=2$, whereas $k-h+1=h+1$.
\end{example}
 
\begin{remark}\label{rem:kung-h1}
Let $h=1$, so that $\mC$ is a linear $[n,k]_q$ code without zero coordinates, and let $M$ be the matroid on $[n]$ represented by the columns of a generator matrix $G$. Then $P_{\mC}=M$ by \cref{prop:representable-code-polymatroid}, and \cref{lem:block-girth-rank} shows that $d^\perp(\mC)$ is the minimum size of a set of linearly dependent columns of $G$, that is, $d^\perp(\mC)=g(M)$. The condition $d^\perp(\mC)\geq3$ means that $M$ is simple, and \cref{thm:kung-block} becomes Kung's bound $\crit(M;q)\leq k-g(M)+3$.
\end{remark}
 
\begin{remark}\label{rem:kung-linear}
Let $\mC$ be a nondegenerate $\F_{q^h}$-linear code of $\F_{q^h}$-dimension $k'$, so that $k=hk'$. Every coordinate map of $\mC$ is $\F_{q^h}$-linear and nonzero, hence surjective, so $\mC$ is faithful.
The support of a subcode is the union of the supports of any generating set. Hence an $\F_{q^h}$-basis of an
$\F_{q^h}$-subcode with full support spans over $\F_q$ a subcode with full support, and conversely the
$\F_{q^h}$-span of an $\F_q$-subcode with full support has $\F_{q^h}$-dimension at most its
$\F_q$-dimension. Thus $\crit(\mC)$ is equal to the critical exponent of $\mC$ as an $\F_{q^h}$-linear code.
Similarly, the $\F_q$-rank of a set of blocks is $h$ times the $\F_{q^h}$-rank of the corresponding
columns, so by \cref{lem:block-girth-rank} $d^\perp(\mC)$ is the dual distance of $\mC$ as an
$\F_{q^h}$-linear code. If $3\leq d^\perp:=d^\perp(\mC)<\infty$, Kung's bound over $\F_{q^h}$ gives
$\crit(\mC)\leq k'-d^\perp+3$, whereas \cref{thm:kung-block} gives $\crit(\mC)\leq h(k'-d^\perp+1)+2$. The difference between the two bounds is $(h-1)(k'-d^\perp+1)$, which is nonnegative by \eqref{eq:dual-singleton}. So for $\F_{q^h}$-linear codes \cref{thm:kung-block} is implied by Kung's bound. For $h=1$ the two bounds coincide, and for $h>1$ they agree exactly when $d^\perp=k'+1$, that is, when $\mC$ is MDS.
\end{remark}
 
\begin{remark}\label{rem:kung-extremal}
Let $\mC$ be faithful with $h\geq2$ and $d^\perp(\mC)=3$. Then $\crit(\mC)=k-2h+2$ if and only if
$k=2h$ and $U_1,\ldots,U_n$ form a spread of $\F_q^{2h}$, as in \cref{ex:spread}. Indeed, by \cref{lem:block-girth-rank}, any two blocks span a subspace of dimension $2h$, so $k\geq2h$ and the $U_i$ pairwise intersect trivially. In particular $n\leq(q^k-1)/(q^h-1)$. Suppose $\crit(\mC)=k-2h+2$.
By \cref{prop:crit-codimension}, every subspace $W\subseteq\F_q^k$ of dimension $2h-1$ contains some $U_i$,
and it contains at most one, since two of them span a subspace of dimension $2h$. Write $\binom{a}{b}_q$ for the number of $b$-dimensional subspaces of $\F_q^a$. Double
counting the pairs $(U_i,W)$ with $U_i\subseteq W$ and $\dim W=2h-1$ gives
$$
\binom{k}{2h-1}_q=n\binom{k-h}{h-1}_q\leq\frac{q^k-1}{q^h-1}\binom{k-h}{h-1}_q .
$$
Since $\binom{k}{2h-1}_q\big/\binom{k-h}{h-1}_q=\prod_{j=k-h+1}^{k}(q^j-1)\big/\prod_{j=h}^{2h-1}(q^j-1)$,
this is equivalent to
$$
\prod_{j=k-h+1}^{k-1}(q^j-1)\leq\prod_{j=h+1}^{2h-1}(q^j-1).
$$
Both products have $h-1\geq1$ factors. If $k\geq2h+1$, each factor on the left is larger than the
corresponding factor on the right, a contradiction. Hence $k=2h$, equality holds throughout, and
$n=q^h+1$, so the $U_i$ form a spread. The converse is \cref{ex:spread}. For $h=1$, the first step of the argument shows that a simple matroid $M$ of rank $k\geq2$ represented over $\F_q$ by a set $S$ of points satisfies $\crit(M;q)=k$ if and only if every point of $\PG(k-1,q)$ lies in $S$, that is, $S=\PG(k-1,q)$. Thus Kung's bound with $g(M)=3$ is attained exactly by $\PG(k-1,q)$, for every $k\geq2$; for $k=1$, $\PG(0,q)$ is a single point and has no circuits.
\end{remark}

\section{Hypergraphic polymatroids and additive codes}\label{sec:hypergraphic}

In \cref{sec:polymatroid} the critical exponent, the minimum distance and the dual distance of an additive code were all expressed through its polymatroid $P_{\mC}$. In this section we look at a class of additive codes for which $P_{\mC}$ has a combinatorial description, namely the codes whose polymatroid is the hypergraphic polymatroid of \cite{whittle1992hypergraph}. For these codes the invariants of \cref{sec:polymatroid} become invariants of a hypergraph, so that results on hypergraphs give results on codes, and vice versa.

Throughout, $H=(V,\mE,\psi_H)$ is a hypergraph with $N$ vertices and $m$ edges, and we use the notation of \cref{subsec:hypergraphs}. In particular, for $A\subseteq\mE$, $\kappa(A)$ is the number of connected components of the spanning subhypergraph $H_A=(V,A,\psi_H|_A)$, isolated vertices included.

\subsection{The hypergraphic polymatroid and its representations}\label{subsec:canonical-representation}

We recall the hypergraphic polymatroid $f_H$, show that it is representable over every field by an explicit family of subspaces, and use it to define hypergraphic codes.

\begin{definition}\label{def:hypergraphic-function}
The \textbf{hypergraphic function} of $H$ is $f_H:2^{\mE}\to\mathbb Z_{\geq0}$, $f_H(A):=N-\kappa(A)$.
\end{definition}

If $C_1,\ldots,C_s$ are the vertex sets of the components of $H_A$, then $f_H(A)=\sum\limits_{j=1}^s(|C_j|-1)$. Isolated vertices contribute $0$. In particular $f_H(\emptyset)=0$ and $f_H(\{e\})=|e|-1$ for every edge $e$. For a graph $\mG$, $f_{\mG}$ is the rank function of the cycle matroid $M(\mG)$. Whittle showed that $f_H$ is the rank function of a polymatroid, namely the \emph{Dilworth truncation} of the function $A\mapsto|V(A)|$; see \cite[Theorem~5.1]{whittle1992hypergraph} and also \cite{lovasz1977flats}. This also follows from \cref{thm:canonical-hypergraphic-representation} below, since every function of the form $A\mapsto\dim\sum\limits_{e\in A}U_e$ is the rank function of a polymatroid.

Let $\mathbb K$ be a field, and let $\mathbb K^V$ be the $\mathbb K$-vector space of functions $V\to\mathbb K$, that is, of vectors $x=(x_v)_{v\in V}$ with coordinates indexed by the vertices. It is isomorphic to $\mathbb K^N$. Let $(\varepsilon_v)_{v\in V}$ be its standard basis, where $\varepsilon_v$ has entry $1$ at $v$ and $0$ elsewhere, and let
$$
Z:=\left\{x\in\mathbb K^V:\sum\limits_{v\in V}x_v=0\right\}.
$$
For every edge $e$ choose $v_e\in e$ and set
$$
U_e:=\langle\varepsilon_v-\varepsilon_{v_e}:v\in e\setminus\{v_e\}\rangle_{\mathbb K}=\langle\varepsilon_v:v\in e\rangle_{\mathbb K}\cap Z,
$$
so that $\dim U_e=|e|-1$.

\begin{theorem}\label{thm:canonical-hypergraphic-representation}
For every $A\subseteq \mE$ we have $\dim_{\mathbb K}\bigl(\sum\limits_{e\in A}U_e\bigr)=f_H(A)$. Consequently, $f_H$ is representable over every field $\mathbb K$. In particular, it is $\F_q$-representable for every prime power $q$.
\end{theorem}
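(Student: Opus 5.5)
We will identify $\sum_{e\in A}U_e$ explicitly. Let $C_1,\ldots,C_s$ be the vertex sets of the connected components of $H_A$, isolated vertices included, so that $s=\kappa(A)$. For each $j$ put
$$
Z_j:=\langle\varepsilon_v:v\in C_j\rangle_{\mathbb K}\cap Z=\langle\varepsilon_v-\varepsilon_u:u,v\in C_j\rangle_{\mathbb K},
$$
which has dimension $|C_j|-1$. The goal is to prove $\sum_{e\in A}U_e=\bigoplus_{j=1}^sZ_j$. Granting this, the sum is direct because the $Z_j$ are supported on the pairwise disjoint coordinate sets $C_j$. Hence the dimension is $\sum_j(|C_j|-1)=N-\kappa(A)=f_H(A)$.

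The inclusion $\subseteq$ is immediate. Each $e\in A$ lies inside a single component $C_j$, so every generator $\varepsilon_v-\varepsilon_{v_e}$ of $U_e$ is a difference of two basis vectors indexed by $C_j$, and it therefore lies in $Z_j$.

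For the reverse inclusion it suffices to show that $\varepsilon_u-\varepsilon_w\in\sum_{e\in A}U_e$ for all $u,w$ in a common component $C_j$, since these vectors span $Z_j$. First, if $u,w\in e$ for an edge $e\in A$, then $\varepsilon_u-\varepsilon_w=(\varepsilon_u-\varepsilon_{v_e})-(\varepsilon_w-\varepsilon_{v_e})\in U_e$. In general, $u$ and $w$ are joined by a walk $u=u_0,e_1,u_1,\ldots,e_\ell,u_\ell=w$ with every $e_i\in A$. The telescoping sum
$$
\varepsilon_u-\varepsilon_w=\sum_{i=1}^\ell(\varepsilon_{u_{i-1}}-\varepsilon_{u_i})
$$
then expresses $\varepsilon_u-\varepsilon_w$ as a sum of terms in $U_{e_i}$, as required.

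Finally, the consequences follow by taking $W=\mathbb K^V$ in \cref{def:representable-polymatroid}. This shows that $(U_e)_{e\in\mE}$ is a $\mathbb K$-representation of $f_H$, and choosing $\mathbb K=\F_q$ gives $\F_q$-representability. Nothing here is a real obstacle. The only points needing care are that isolated vertices of $H_A$ count as components with $Z_j=0$, which is consistent with contributing $|C_j|-1=0$, and that the identity $U_e=\langle\varepsilon_v:v\in e\rangle\cap Z$ together with $\dim U_e=|e|-1$ holds over any field, because the spanning vectors are visibly linearly independent and lie in that intersection, which has dimension $|e|-1$.
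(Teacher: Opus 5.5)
Your proposal is correct and follows essentially the same route as the paper: you decompose $\sum_{e\in A}U_e$ as the direct sum of the zero-sum spaces on the components of $H_A$, and you get the reverse inclusion by telescoping along walks. The only cosmetic difference is that the paper telescopes from a fixed base vertex $u_0\in C_j$, whereas you use arbitrary pairs $u,w\in C_j$.
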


\begin{proof}
Let $C_1,\ldots,C_s$ be the vertex sets of the components of $H_A$, so that $s=\kappa(A)$. For each~$j$ let $Z_{C_j}\subseteq\mathbb K^V$ be the space of vectors supported on $C_j$ with coordinate sum zero, which has dimension $|C_j|-1$. These spaces have pairwise disjoint supports, so their sum is direct. Every edge $e\in A$ is contained in a single component $C_j$, and then $U_e\subseteq Z_{C_j}$. Hence $\sum\limits_{e\in A}U_e\subseteq\bigoplus\limits_jZ_{C_j}$. Conversely, $U_e$ contains $\varepsilon_a-\varepsilon_b$ for all $a,b\in e$. Fix $j$ and $u_0\in C_j$. For every $u\in C_j$ there is a walk $u_0,e_1,u_1,\ldots,e_\ell,u_\ell=u$ with edges in $A$. Since $u_{i-1},u_i\in e_i$, we have $\varepsilon_{u_i}-\varepsilon_{u_{i-1}}\in U_{e_i}$ for every $i$, and therefore
$$
\varepsilon_u-\varepsilon_{u_0}=\sum\limits_{i=1}^\ell(\varepsilon_{u_i}-\varepsilon_{u_{i-1}})\in\sum\limits_{e\in A}U_e.
$$
The vectors $\varepsilon_u-\varepsilon_{u_0}$, $u\in C_j$, span $Z_{C_j}$. Hence $\sum\limits_{e\in A}U_e=\bigoplus\limits_jZ_{C_j}$, which has dimension $$\sum\limits_j(|C_j|-1)=N-\kappa(A)=f_H(A).$$
\end{proof}

The space $\langle\varepsilon_v:v\in e\rangle$ has dimension $|e|$, and $Z$ is a hyperplane containing none of the vectors~$\varepsilon_v$. Intersecting with $Z$ decreases the dimension of each of these spaces by one, and by \cref{thm:canonical-hypergraphic-representation} it gives a representation of $f_H$. This is exactly the hyperplane-section construction of Dilworth truncations in \cite{lovasz1977flats} and \cite[Sections~5-6]{whittle1992hypergraph}.

\Cref{thm:canonical-hypergraphic-representation} motivates the following definition.

\begin{definition}\label{def:hypergraphic-additive}
A nondegenerate additive code $\mC\subseteq\F_{q^h}^n$ is \textbf{hypergraphic} if there is a hypergraph $H=(V,[n],\psi_H)$ with $\rho_{\mC}=f_H$. Any such $H$ is called an \textbf{underlying hypergraph} of~$\mC$.
\end{definition}

If $H$ underlies $\mC$, then $\rho_{\mC}(\{i\})=f_H(\{i\})=|\psi_H(i)|-1$, and this is at least $1$ because $\mC$ is nondegenerate. Hence every edge of an underlying hypergraph of $\mC$ has at least two vertices.

\begin{example}\label{ex:small-code-hypergraphic}
The code $\mC$ of \cref{ex:small-code} is hypergraphic, with underlying hypergraph $H_0$ of \cref{ex:small-hypergraph}, where the edge $e_i$ corresponds to the coordinate $i\in[3]$. Indeed, a single edge $e_i$ has $3$ vertices, so $f_{H_0}(\{e_i\})=3-1=2$. Any two edges of $H_0$ have union $V=[4]$ and share the vertex $1$, so $f_{H_0}(A)=4-1=3$ for $|A|\geq2$. These values agree with $\rho_{\mC}$ computed in \cref{ex:small-code}.
\end{example}

In the language of polymatroids, Whittle showed that the hypergraphic polymatroids are exactly those obtained from the cycle matroid $M(K_N)$ of a complete graph by assigning to each element a nonempty connected flat \cite[Proposition~6.5]{whittle1992hypergraph}. Applied to $P_{\mC}$, this gives the following characterisation of hypergraphic codes.

\begin{theorem}[{\cite[Proposition~6.5]{whittle1992hypergraph}}]\label{thm:hypergraphic-characterisation}
A nondegenerate additive code $\mC\subseteq\F_{q^h}^n$ is hypergraphic if and only if there are an integer $N\geq2$ and nonempty connected flats $F_1,\ldots,F_n$ of the cycle matroid $M(K_N)$ such that $\rho_{\mC}(A)=r_{M(K_N)}\bigl(\bigcup_{i\in A}F_i\bigr)$ for every $A\subseteq[n]$. In this case $F_i=\binom{e_i}{2}$ for subsets $e_i\subseteq[N]$ with $|e_i|\geq2$, and $([N],[n],i\mapsto e_i)$ is an underlying hypergraph of $\mC$.
\end{theorem}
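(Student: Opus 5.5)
The plan is to reduce the statement to the definition of a hypergraphic code and to a direct computation of the cycle-matroid rank of a union of complete-graph flats. First I will describe the nonempty connected flats of $M(K_N)$. A flat $F$ of $M(K_N)$ is a union of edge sets $\binom{Y_1}{2}\cup\cdots\cup\binom{Y_s}{2}$ for disjoint vertex sets $Y_j$. Indeed, the closure of a set of edges adds every edge joining two vertices of the same component. The restriction $M(K_N)|F$ is the cycle matroid of the graph with edge set $F$. This matroid is connected exactly when the graph has a single nontrivial component that is $2$-connected, or has one edge. Since $\binom{Y_j}{2}$ for different $j$ lie in different blocks, a nonempty connected flat is exactly $\binom{Y}{2}$ with $|Y|\geq2$. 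Here I use that $K_{|Y|}$ is $2$-connected for $|Y|\geq3$, so any two edges lie on a common cycle, and that a single edge is trivially connected. This gives the second sentence of the statement, with $e_i:=Y$ for $F_i$.

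Next, for subsets $e_i\subseteq[N]$ with $|e_i|\geq2$, I will show that $r_{M(K_N)}\bigl(\bigcup_{i\in A}\binom{e_i}{2}\bigr)=N-\kappa(A)=f_H(A)$ for $H=([N],[n],i\mapsto e_i)$. By the cycle-matroid rank formula, the left side is $N$ minus the number of components of the graph $([N],\bigcup_{i\in A}\binom{e_i}{2})$. Two vertices are joined by a walk in this graph if and only if they are joined by a walk in $H_A$. A graph edge $\{u,v\}\subseteq e_i$ is a hyperedge step through $e_i$. Conversely, a hyperedge step from $u$ to $v$ inside $e_i$ is the graph edge $\{u,v\}$, or a trivial step when $u=v$. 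So the two partitions into components coincide, isolated vertices included.

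With this identity both directions follow. For ($\Leftarrow$), the identity turns the hypothesis into $\rho_{\mC}=f_H$, and $\mC$ is nondegenerate by assumption, so $\mC$ is hypergraphic with underlying hypergraph $H$. For ($\Rightarrow$), let $H=(V,[n],\psi_H)$ with $\rho_{\mC}=f_H$. As noted after \cref{def:hypergraphic-additive}, every edge has at least two vertices, so $N=|V|\geq2$. I identify $V$ with $[N]$ and set $F_i:=\binom{\psi_H(i)}{2}$, which is a nonempty connected flat by the first step. The identity then gives $\rho_{\mC}(A)=r_{M(K_N)}\bigl(\bigcup_{i\in A}F_i\bigr)$.

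The main obstacle is the flat and connectivity characterisation in the first step. In particular, I must justify that a flat whose graph has two nontrivial components is disconnected, because its edges in different components lie in no common circuit. I must also justify that $\binom{Y}{2}$ is closed: any edge leaving $Y$ increases the number of vertices spanned and hence the rank. Everything else is bookkeeping with components. This step can alternatively be quoted from \cite[Proposition~6.5]{whittle1992hypergraph}.
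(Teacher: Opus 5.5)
Your proof is correct and follows the same line as the paper. The paper cites Whittle for the equivalence and justifies the final sentence exactly as in your first step: flats of $M(K_N)$ are unions of the cliques $\binom{Y}{2}$ on the blocks of a partition of $[N]$, and such a flat is nonempty and connected precisely when it has a single nontrivial block. Your identity $r_{M(K_N)}\bigl(\bigcup_{i\in A}\binom{e_i}{2}\bigr)=N-\kappa(A)$, obtained by matching components, supplies in an elementary way the part the paper leaves to the citation, so your version is self-contained.
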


The last statement follows from the description of the flats of $M(K_N)$ as unions of the edge sets of complete graphs on the blocks of a partition of $[N]$; see \cite[Section~1.7]{oxley2011matroid}. Such a flat is nonempty and connected exactly when one block has at least two elements and all other blocks are singletons.

\begin{remark}\label{rem:not-hypergraphic}
Not every additive code is hypergraphic. For $h=1$, an underlying hypergraph is a graph, so a linear code is hypergraphic if and only if its matroid is graphic. For instance, the $[4,2,3]_3$ code, whose matroid is $U_{2,4}$, is not hypergraphic. For $h\geq2$, the classification in \cref{subsec:structural-nonexistence} shows that many faithful QMDS codes are not hypergraphic.
\end{remark}

\begin{corollary}\label{cor:uniformity-fullrank}
A hypergraphic code $\mC\subseteq\F_{q^h}^n$ is faithful if and only if some, or equivalently every, underlying hypergraph of $\mC$ is $(h+1)$-uniform.
\end{corollary}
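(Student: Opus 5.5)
The plan is to compare the rank of a single coordinate in the two descriptions. By the theorem on $P_{\mC}$ at the start of \cref{sec:polymatroid}, we have $\rho_{\mC}(\{i\})=\dim_{\F_q}\im\varphi_i\leq h$. Moreover, $\mC$ is faithful if and only if $\rho_{\mC}(\{i\})=h$ for every $i\in[n]$. Let $H=(V,[n],\psi_H)$ be any underlying hypergraph of $\mC$. Then, by \cref{def:hypergraphic-additive} and \cref{def:hypergraphic-function},
$$\rho_{\mC}(\{i\})=f_H(\{i\})=N-\kappa(\{i\}).$$
In the spanning subhypergraph $H_{\{i\}}$, the edge $i$ gives one component with vertex set $\psi_H(i)$, and every other vertex is isolated. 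Hence $\kappa(\{i\})=N-|\psi_H(i)|+1$, and therefore $\rho_{\mC}(\{i\})=|\psi_H(i)|-1$, as already noted after \cref{def:hypergraphic-additive}.

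The two directions then follow in one line. Suppose $\mC$ is faithful. Then $|\psi_H(i)|-1=h$ for all $i$, so every edge has exactly $h+1$ vertices, and $H$ is $(h+1)$-uniform. Conversely, if $H$ is $(h+1)$-uniform, then $\rho_{\mC}(\{i\})=h$ for every $i$, so each $\varphi_i$ is surjective and $\mC$ is faithful.

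For the phrase ``some, or equivalently every'', the argument above applies to an arbitrary underlying hypergraph. So faithfulness implies that every underlying hypergraph is $(h+1)$-uniform. Trivially, every implies some, since $\mC$ is hypergraphic and so at least one underlying hypergraph exists. Finally, if some underlying hypergraph is $(h+1)$-uniform, then $\mC$ is faithful by the converse direction. This closes the cycle of implications.

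I expect no real obstacle here. The only point needing care is the count $\kappa(\{i\})=N-|\psi_H(i)|+1$: the edge $i$ gives one component and the isolated vertices give the rest. The key observation is that $\rho_{\mC}(\{i\})$ is intrinsic to $\mC$, which makes the statement independent of the choice of underlying hypergraph.
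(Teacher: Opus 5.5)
Your proposal is correct and follows the paper's proof: you identify $\rho_{\mC}(\{i\})=|\psi_H(i)|-1$ for an arbitrary underlying hypergraph and compare it with the characterisation of faithfulness as $\rho_{\mC}(\{i\})=h$ for all $i$. Your explicit count $\kappa(\{i\})=N-|\psi_H(i)|+1$ and your handling of ``some, or equivalently every'' fill in details that the paper leaves implicit.
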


\begin{proof}
For every underlying hypergraph $H$ we have $\rho_{\mC}(\{i\})=|\psi_H(i)|-1$ for every $i$, and $\mC$ is faithful if and only if $\rho_{\mC}(\{i\})=h$ for every $i$.
\end{proof}

\begin{remark}\label{rem:recover-hypergraph}
An underlying hypergraph need not be unique. For $h=1$, by Whitney's $2$-isomorphism theorem, two graphs have the same cycle matroid if and only if they are related by vertex identification, vertex cleaving and twisting \cite[Section~5.3]{oxley2011matroid}. For instance, all forests with~$m$ edges have the same cycle matroid. For this reason we state our results for every underlying hypergraph. Bounds involving parameters that depend on the choice of $H$, such as $\Delta(H)$ or $\alpha_{\mathrm w}(H)$, can then be applied to the underlying hypergraph that gives the best bound.
\end{remark}

\begin{remark}\label{rem:connected-wlog}
Every hypergraphic code has a connected underlying hypergraph. Indeed, let $H$ underlie $\mC$, and let $u$ and $w$ be vertices in different components of $H$. Let $H'$ be obtained from $H$ by identifying $u$ and $w$. For every $A\subseteq\mE$, the vertices $u$ and $w$ lie in different components of $H_A$, so $H'_A$ has exactly $\kappa(A)-1$ components, and $f_{H'}(A)=(N-1)-(\kappa(A)-1)=f_H(A)$. Moreover, $H'$ has the same edge sizes as $H$, and it has the same Berge cycles, since identifying two nodes in different components of the incidence graph creates no cycle. Repeating this, we obtain a connected underlying hypergraph of $\mC$. Hence  we can assume that the underlying hypergraph is connected.
\end{remark}

\subsection{The additive code of a uniform hypergraph}\label{subsec:canonical-code}

We show that every uniform hypergraph is an underlying hypergraph of a faithful additive code, obtained by evaluating differences of vertex labels on the edges.

Let $H=(V,\mE,\psi_H)$ be $r$-uniform with $r\geq2$, and set $h:=r-1$. For every edge fix an ordering $e=(v_{e,1},\ldots,v_{e,r})$ and let $B_e\in\Mat_{N,h}(\F_q)$ be the matrix with columns $\varepsilon_{v_{e,j}}-\varepsilon_{v_{e,1}}$, $2\leq j\leq r$, so that $\colsp_{\F_q}(B_e)=U_e$. Fix an $\F_q$-linear bijection $\iota:\F_q^h\to\F_{q^h}$.

\begin{definition}\label{def:hypergraphic-code}
For a vertex labelling $a:V\to\F_q$, viewed as a row vector in $\F_q^V$, let
$$\Phi_H(a):=\bigl(\iota(aB_e)\bigr)_{e\in \mE}\in\F_{q^h}^m,$$
where
$$aB_e=\bigl(a(v_{e,j})-a(v_{e,1})\bigr)_{2\leq j\leq r}\in\F_q^h.
$$
The \textbf{hypergraphic code of $H$} is $\mC_H:=\im(\Phi_H)$.
\end{definition}

Another ordering of an edge, or another bijection $\iota$, changes $\mC_H$ by an $\F_q$-linear bijection of the alphabet in the corresponding coordinate, so all choices give equivalent codes.

\begin{theorem}\label{thm:hypergraphic-rank}
For every $A\subseteq \mE$ we have $\rho_{\mC_H}(A)=f_H(A)$. In particular, $\mC_H$ is a faithful hypergraphic code with underlying hypergraph $H$, and $\dim_{\F_q}\mC_H=N-\kappa(H)$.
\end{theorem}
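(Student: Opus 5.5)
The plan is to identify a generator matrix, or more precisely the expanded blocks, of $\mC_H$ with the matrices $B_e$, and then invoke \cref{prop:representable-code-polymatroid} together with \cref{thm:canonical-hypergraphic-representation}. The map $\Phi_H:\F_q^V\to\F_{q^h}^m$ is $\F_q$-linear. Its $e$-th coordinate, read through $\iota^{-1}$, is $a\mapsto aB_e$. Hence, for $A\subseteq\mE$, a labelling $a$ satisfies $\Phi_H(a)\in\F_{q^h}^m(A)$ if and only if $aB_e=0$ for all $e\in A$. This is equivalent to $a\in\bigl(\sum_{e\in A}U_e\bigr)^\perp$, where $\perp$ is taken with respect to the standard pairing on $\F_q^V$.

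The one subtlety is that $\Phi_H$ is not injective. It is not a generator-matrix map $\F_q^k\to\mC$ with $k=\dim\mC_H$, so I would compute dimensions directly rather than quote \cref{prop:representable-code-polymatroid} verbatim. The kernel is $\ker\Phi_H=\bigl(\sum_{e\in\mE}U_e\bigr)^\perp$. By \cref{thm:canonical-hypergraphic-representation} the sum $\sum_{e\in\mE}U_e$ has dimension $f_H(\mE)=N-\kappa(H)$, so $\dim\ker\Phi_H=\kappa(H)$ and $\dim_{\F_q}\mC_H=N-\kappa(H)$. (Concretely, the kernel consists of the labellings constant on components.) Moreover, $\mC_H(A)=\Phi_H\bigl((\sum_{e\in A}U_e)^\perp\bigr)$. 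This preimage contains $\ker\Phi_H$, since $\sum_{e\in A}U_e\subseteq\sum_{e\in\mE}U_e$. Therefore
\[
\dim\mC_H(A)=\bigl(N-f_H(A)\bigr)-\kappa(H),
\]
again using \cref{thm:canonical-hypergraphic-representation}. Subtracting from $k=N-\kappa(H)$ gives $\rho_{\mC_H}(A)=f_H(A)$.

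The remaining claims then follow quickly. Nondegeneracy holds because $\rho_{\mC_H}(\{e\})=|e|-1=h\geq1$. So $\mC_H$ is hypergraphic with underlying hypergraph $H$ by \cref{def:hypergraphic-additive}, and faithful because every edge has size $r=h+1$ (\cref{cor:uniformity-fullrank}, or directly since $\rho_{\mC_H}(\{e\})=h$). The only point needing care is the kernel bookkeeping above. If preferred, one can instead restrict $\Phi_H$ to a complement of its kernel to obtain a genuine generator matrix $G$ whose blocks have column spaces equal to the projections of the $U_e$; this is equivalent, but the direct dimension count is cleaner.
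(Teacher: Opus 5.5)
Your proof is correct and follows essentially the paper's route: both reduce $\rho_{\mC_H}(A)$ to $\dim\sum_{e\in A}U_e$ and then apply \cref{thm:canonical-hypergraphic-representation}. The only difference is in the linear-algebra bookkeeping. The paper computes $\rho_{\mC_H}(A)$ directly as the dimension of the projection of $\mC_H$ onto the coordinates in $A$; that projection has kernel $\mC_H(A)$ and image of dimension $\rank(B_A)$. This sidesteps the non-injectivity of $\Phi_H$, which you instead handle by noting that $\ker\Phi_H\subseteq\bigl(\sum_{e\in A}U_e\bigr)^\perp$.
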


\begin{proof}
Let $B:=(B_e)_{e\in \mE}\in\Mat_{N,mh}(\F_q)$ and $B_A:=(B_e)_{e\in A}$. Identifying each coordinate with $\F_q^h$ through $\iota$, we have $\mC_H=\{aB:a\in\F_q^V\}$. The kernel of the projection of $\mC_H$ onto the coordinates in $A$ is $\mC_H(A)$, so $\rho_{\mC_H}(A)=\rank(B_A)=\dim\sum\limits_{e\in A}U_e$, which is equal to  $f_H(A)$ by \cref{thm:canonical-hypergraphic-representation}. Taking $A=\{e\}$ gives $\rho_{\mC_H}(\{e\})=h$, and taking $A=\mE$ gives the dimension.
\end{proof}

The uniformity assumption is needed only for the alphabet. An additive code has the same alphabet $\F_{q^h}$ in every coordinate, whereas the space $U_e$ attached to an edge $e$ has dimension $|e|-1$. Hence all edges must have the same size for all coordinates of $\mC_H$ to be identified with $\F_{q^h}$ through $\iota$. If $H$ is not uniform but every edge has at least two vertices, one can take $h:=\max\limits_{e\in\mE}(|e|-1)$ and add $h-|e|+1$ zero columns to each $B_e$. The resulting code still satisfies $\rho_{\mC_H}=f_H$, so it is hypergraphic with underlying hypergraph $H$, but it is not faithful. \Cref{thm:chromatic-critical} below applies to it, whereas the results that assume faithfulness do not.

\begin{example}\label{ex:fano-code}
For the Fano plane $H$ of \cref{ex:fano}, $\mC_H$ is a faithful additive code in $\F_{q^2}^7$ of $\F_q$-dimension $6$, that is, a $[7,3]_q^2$ code, for every prime power $q$.
\end{example}

\subsection{Weak colourings and the critical exponent}\label{subsec:weak-coloring}

We show that the critical exponent of a hypergraphic code is determined by the weak chromatic number of an underlying hypergraph.

The following identity goes back to Helgason \cite{helgason1974} and appears as \cite[Theorem~4.1]{whittle1992hypergraph}, and the critical-exponent formulation is \cite[Theorem~7.1]{whittle1992hypergraph}. We include the proof for completeness.

\begin{theorem}[Helgason-Whittle]\label{thm:chromatic-critical}
Let $\mC$ be a hypergraphic additive code with underlying hypergraph $H$. Then $\chi_H^{\mathrm w}(z)=z^{\kappa(H)}p_{\mC}(z)$ and $\crit(\mC)=\lceil\log_q\chi_{\mathrm w}(H)\rceil$.
\end{theorem}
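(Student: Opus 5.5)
We prove the polynomial identity by inclusion-exclusion over monochromatic edges, and then read off the critical exponent from \cref{cor:critical-full-support}. Since both sides are polynomials in $z$, it suffices to check the identity at every positive integer $s$.

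Fix $s\geq1$. For $A\subseteq[n]$, let $M_A$ be the set of colourings $\gamma:V\to[s]$ that are constant on every edge of $A$. A colouring is constant on every edge of $A$ if and only if it is constant on every connected component of $H_A$: one direction follows along walks, and the other is immediate. Hence $|M_A|=s^{\kappa(A)}$. Inclusion-exclusion over the set of monochromatic edges then gives
$$
\chi_H^{\mathrm w}(s)=\sum\limits_{A\subseteq[n]}(-1)^{|A|}s^{\kappa(A)}.
$$
Now substitute $\kappa(A)=N-f_H(A)=N-\rho_{\mC}(A)$ and $\kappa(H)=N-\rho_{\mC}([n])=N-k$. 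The exponent becomes $\kappa(H)+k-\rho_{\mC}(A)$, so the sum equals $s^{\kappa(H)}p_{\mC}(s)$. The two polynomials agree at infinitely many points, hence they are equal.

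For the critical exponent, note that $\mC$ is nondegenerate, so \cref{cor:critical-full-support} gives $\crit(\mC)=\min\{t\geq1:p_{\mC}(q^t)>0\}$. Since $q^{t\kappa(H)}>0$, the condition $p_{\mC}(q^t)>0$ is equivalent to $\chi_H^{\mathrm w}(q^t)>0$, that is, to the existence of a weak colouring with $q^t$ colours. This holds if and only if $q^t\geq\chi_{\mathrm w}(H)$, because a colouring with fewer colours is also a colouring with more colours. The number $\chi_{\mathrm w}(H)$ is finite, since every edge has at least two vertices (noted after \cref{def:hypergraphic-additive}).

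So $\crit(\mC)$ is the least $t\geq1$ with $t\geq\log_q\chi_{\mathrm w}(H)$. It remains to check that this equals $\lceil\log_q\chi_{\mathrm w}(H)\rceil$, which needs $\lceil\log_q\chi_{\mathrm w}(H)\rceil\geq1$. This holds whenever some edge exists, because then $\chi_{\mathrm w}(H)\geq2$. The degenerate case with no coordinates, $n=0$, is excluded implicitly.

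The only subtle point is this boundary bookkeeping: the requirement $t\geq1$ against the possibility $\chi_{\mathrm w}=1$, together with the polynomial-identity extension from integer evaluations. Neither step should present a real obstacle.
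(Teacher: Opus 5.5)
Your proposal is correct and follows essentially the same route as the paper. It counts the $s^{\kappa(A)}$ colourings constant on the components of $H_A$, applies inclusion-exclusion, substitutes $\kappa(A)=\kappa(H)+k-\rho_{\mC}(A)$, and then uses \cref{cor:critical-full-support} together with the fact that a weak colouring with $s$ colours is also one with more colours. Your extra check that $\chi_{\mathrm w}(H)\geq2$, so that the constraint $t\geq1$ is harmless, is left implicit in the paper; it holds because $n\geq1$ by the standing conventions and every edge has at least two vertices.
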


\begin{proof}
For $A\subseteq \mE$, the maps $V\to[z]$ that are constant on every edge of $A$ are those that are constant on the components of $H_A$. There are $z^{\kappa(A)}$ of them. Inclusion-exclusion gives $\chi_H^{\mathrm w}(z)=\sum\limits_{A\subseteq \mE}(-1)^{|A|}z^{\kappa(A)}$, which also shows that $\chi_H^{\mathrm w}$ is a polynomial. Since $\rho_{\mC}=f_H$, we have $k-\rho_{\mC}(A)=f_H(\mE)-f_H(A)=\kappa(A)-\kappa(H)$, whence $p_{\mC}(z)=z^{-\kappa(H)}\chi_H^{\mathrm w}(z)$. By \cref{cor:critical-full-support}, $\crit(\mC)$ is the least $t\geq1$ such that $H$ has a weak colouring with $q^t$ colours. Every edge has at least two vertices, so $\chi_{\mathrm w}(H)$ is finite, and a weak colouring with $s$ colours is also one with $s'\geq s$ colours, so this least $t$ is $\lceil\log_q\chi_{\mathrm w}(H)\rceil$.
\end{proof}

\begin{example}\label{ex:fano-crit}
For the Fano plane, \cref{ex:fano} and \cref{thm:chromatic-critical} give $p_{\mC_H}(z)=(z-1)(z-2)(z^4+3z^3-6z+3)$. Hence $\crit(\mC_H)=2$ for $q=2$ and $\crit(\mC_H)=1$ for $q\geq3$. For the code of \cref{ex:small-code}, $\chi^{\mathrm w}_{H_0}(z)=z(z-1)^2(z+2)$.
\end{example}

For the hypergraphic code of a uniform hypergraph, the identity can be seen directly from vertex labellings.

\begin{proposition}\label{prop:colorings-full-support}
Let $H$ be $r$-uniform, fix $t\geq1$, and identify a set of $q^t$ colours with $\F_q^t$. Consider the map $(a_1,\ldots,a_t)\mapsto(\Phi_H(a_1),\ldots,\Phi_H(a_t))$ from $t$-tuples of vertex labellings $a_j:V\to\F_q$ to $\mC_H^t$.
\begin{enumerate}[label=\textup{(\roman*)}]
\item The labelling $\gamma(v):=(a_1(v),\ldots,a_t(v))$ is a weak colouring of $H$ if and only if the image $t$-tuple has full joint support.
\item Every $t$-tuple of codewords has exactly $q^{t\kappa(H)}$ preimages.
\end{enumerate}
Consequently, $\chi_H^{\mathrm w}(q^t)=q^{t\kappa(H)}p_{\mC_H}(q^t)$.
\end{proposition}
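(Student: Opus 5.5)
The plan is to work coordinatewise, using the explicit form of $\Phi_H$ from \cref{def:hypergraphic-code}. For (i), fix an edge $e=(v_{e,1},\ldots,v_{e,r})$. The $e$-th entry of $\Phi_H(a_j)$ is $\iota(a_jB_e)$, and $\iota$ is a bijection. Hence this entry vanishes if and only if $a_j(v_{e,i})=a_j(v_{e,1})$ for all $2\leq i\leq r$, that is, if and only if $a_j$ is constant on $e$. The coordinate $e$ lies outside the joint support of the image tuple exactly when every $a_j$ is constant on $e$. This happens exactly when $\gamma=(a_1,\ldots,a_t)$ is constant on $e$, i.e. when $e$ is monochromatic for $\gamma$. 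Therefore full joint support is equivalent to having no monochromatic edge, which proves (i).

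For (ii), I will note that the map is the $t$-fold product of the $\F_q$-linear surjection $\Phi_H:\F_q^V\to\mC_H$. Each fibre of this product is therefore a coset of $(\ker\Phi_H)^t$, so it suffices to compute $\dim\ker\Phi_H$. By \cref{thm:hypergraphic-rank}, $\dim\mC_H=N-\kappa(H)$, so rank–nullity gives $\dim\ker\Phi_H=\kappa(H)$. One can also see this directly: the kernel consists of the labellings that are constant on every edge, hence constant on the components of $H$. Thus every $t$-tuple of codewords has exactly $q^{t\kappa(H)}$ preimages.

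Finally I combine the two parts. Identifying colourings $V\to\F_q^t$ with $t$-tuples $(a_1,\ldots,a_t)$ of labellings is a bijection. By (i), $\chi_H^{\mathrm w}(q^t)$ counts the labelling tuples whose image tuple has full joint support. By (ii), this number equals $q^{t\kappa(H)}$ times the number of $t$-tuples in $\mC_H^t$ with full joint support. \Cref{cor:critical-full-support} identifies the latter number with $p_{\mC_H}(q^t)$; it applies because $\mC_H$ is nondegenerate, being faithful by \cref{thm:hypergraphic-rank}. I expect no genuine obstacle here. The only care needed is to check in (i) that vanishing of $aB_e$ means constancy on all of $e$, using the choice of columns $\varepsilon_{v_{e,j}}-\varepsilon_{v_{e,1}}$.
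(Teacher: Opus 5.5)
Your proposal is correct and follows the paper's proof essentially step for step. For (i) you use that the $e$-th entry of $\Phi_H(a_j)$ vanishes exactly when $a_j$ is constant on $e$; for (ii) you use that $\ker\Phi_H$ has dimension $\kappa(H)$; and you get the final identity from \cref{cor:critical-full-support}. Your additional remarks, that the fibres are cosets of $(\ker\Phi_H)^t$ and that $\mC_H$ is nondegenerate so the corollary applies, only make explicit what the paper leaves implicit.
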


\begin{proof}
The $e$-th coordinate of $\Phi_H(a_j)$ vanishes if and only if $a_j$ is constant on $e$. Hence all $t$ codewords vanish at $e$ if and only if $\gamma$ is constant on $e$, which proves (i). The kernel of $\Phi_H$ consists of the labellings that are constant on the components of $H$, so it has dimension $\kappa(H)$, which proves (ii). The final identity follows from \cref{cor:critical-full-support}.
\end{proof}

\subsection{Berge girth and the Kung-type bound}\label{subsec:berge-kung}

We show that the dual distance of a faithful hypergraphic code is the Berge girth of every underlying hypergraph, and we use this to specialise the Kung-type bound of \cref{thm:kung-block}.

\begin{theorem}\label{thm:dual-berge-girth}
Let $\mC$ be a faithful hypergraphic additive code and let $H$ be any underlying hypergraph. Then $d^\perp(\mC)=g_{\rm B}(H)$.
\end{theorem}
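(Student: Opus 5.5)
By \cref{lem:block-girth-rank}, $d^\perp(\mC)$ is the least $|A|$ with $\rho_{\mC}(A)<h|A|$. By \cref{cor:uniformity-fullrank}, $H$ is $(h+1)$-uniform, and $\rho_{\mC}=f_H$. So the statement reduces to a purely combinatorial claim: for an $(h+1)$-uniform hypergraph, the least size of an edge set $A$ with $f_H(A)<h|A|$ equals $g_{\rm B}(H)$, with both sides equal to $\infty$ simultaneously.

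The key tool is an incidence-graph count. Let $I_A$ be the subgraph of the incidence graph induced on $V(A)\sqcup A$. For a component of $H_A$ with vertex set $C$ and edge set $A_C$ (ignoring isolated vertices), the corresponding component of $I_A$ has $|C|+|A_C|$ vertices and $(h+1)|A_C|$ edges. It is connected, so it is a tree if and only if $(h+1)|A_C|=|C|+|A_C|-1$, that is, $|C|-1=h|A_C|$. In general $|C|-1\le h|A_C|$. Summing over components, and using $f_H(A)=\sum_C(|C|-1)$, I obtain $f_H(A)\le h|A|$, with equality if and only if $I_A$ is a forest.

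Hence $f_H(A)<h|A|$ holds exactly when $I_A$ contains a cycle, which exists exactly when some Berge cycle uses only edges of $A$. It remains to compare sizes.
\begin{enumerate}
\item If $H$ has a Berge cycle $e_1,v_1,\ldots,e_\ell,v_\ell$ with $\ell=g_{\rm B}(H)$, then $A=\{e_1,\ldots,e_\ell\}$ has $|A|=\ell$ and $I_A$ contains the cycle, so $d^\perp(\mC)\le g_{\rm B}(H)$.
\item Conversely, if $f_H(A)<h|A|$, then $I_A$ has a cycle. This cycle has length $2\ell$ and passes through $\ell$ distinct edges of $A$, so it is a Berge cycle of length $\ell\le|A|$. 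Hence $g_{\rm B}(H)\le d^\perp(\mC)$.
\end{enumerate}
If no Berge cycle exists, then every $A$ gives a forest, so both sides are $\infty$. A Berge cycle of length $\ell\ge2$ is exactly a cycle of length $2\ell$ in the incidence graph, and since the incidence graph is simple, every cycle has length at least $4$. So the correspondence between cycles and Berge cycles is clean.

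The main point requiring care is the forest/tree counting, in particular the treatment of isolated vertices of $H_A$. These contribute $0$ to $f_H(A)$ and are not vertices of $I_A$, so they do not affect the count. Everything else is a routine application of \cref{lem:block-girth-rank}.
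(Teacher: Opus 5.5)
Your proof is correct and follows the paper's argument essentially step for step: reduce via \cref{lem:block-girth-rank} and $(h+1)$-uniformity, then count nodes and edges in each component of the incidence graph of $H_A$ to get $f_H(A)\le h|A|$, with equality exactly when no Berge cycle has all its edges in $A$. The differences are cosmetic: you drop isolated vertices from $I_A$ rather than treating them as trivial components with $a=0$, $v=1$, and you write out the two size comparisons that the paper compresses into a single line.
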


\begin{proof}
By \cref{cor:uniformity-fullrank}, $H$ is $(h+1)$-uniform. For $A\subseteq \mE$ let $I_A$ be the incidence graph of $H_A$, with node set $V\sqcup A$. Its cycles of length $2\ell$ are exactly the Berge cycles of $H$ with $\ell$ edges, all in $A$, and its components correspond to the components of $H_A$. Consider a component of $I_A$ with $a$ nodes in $A$ and $v$ nodes in $V$. Every node in $A$ has degree $h+1$, so the component has $(h+1)a$ edges and $a+v$ nodes. If it is a tree, then $(h+1)a=a+v-1$, and hence $v-1=ha$. Otherwise it contains a cycle, so $(h+1)a\geq a+v$, and hence $v-1\leq ha-1$. Isolated vertices of $H_A$ are components with $a=0$ and $v=1$, for which $v-1=ha=0$. Summing $v-1$ and $ha$ over all components gives
$$
f_H(A)=N-\kappa(A)\leq h|A|,
$$
with equality if and only if no Berge cycle has all its edges in $A$. Since $\rho_{\mC}=f_H$, \cref{lem:block-girth-rank} gives $d^\perp(\mC)=\min\{|A|:A\text{ contains the edges of a Berge cycle}\}=g_{\rm B}(H)$.
\end{proof}

For a faithful hypergraphic code with a connected underlying hypergraph that has a Berge cycle, the dual Singleton bound \eqref{eq:dual-singleton} and \cref{thm:dual-berge-girth} give $g_{\rm B}(H)\leq\lfloor(N-1)/h\rfloor+1$. This can also be seen directly: a set $A$ of $g_{\rm B}(H)-1$ edges contains no Berge cycle, so $h|A|=f_H(A)\leq N-1$ by the proof of \cref{thm:dual-berge-girth}. For the Fano plane the bound reads $3\leq4$.

\begin{corollary}\label{cor:kung-hypergraph}
Let $\mC$ be a faithful hypergraphic additive code with a connected underlying hypergraph $H$ on $N$ vertices. If $H$ is linear and has a Berge cycle, that is, $3\leq g_{\rm B}(H)<\infty$, then
$$
\crit(\mC)=\lceil\log_q\chi_{\mathrm w}(H)\rceil\leq N-h\bigl(g_{\rm B}(H)-1\bigr)+1,$$
equivalently,
$$\chi_{\mathrm w}(H)\leq q^{N-h(g_{\rm B}(H)-1)+1}.
$$
\end{corollary}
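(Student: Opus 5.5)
The corollary is a direct combination of three earlier results: \cref{thm:chromatic-critical} for the equality, \cref{thm:dual-berge-girth} to translate the dual distance into the Berge girth, and \cref{thm:kung-block} for the inequality.

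The equality $\crit(\mC)=\lceil\log_q\chi_{\mathrm w}(H)\rceil$ is exactly \cref{thm:chromatic-critical}, which holds for every underlying hypergraph. For the inequality we need the $\F_q$-dimension $k$ and the dual distance in terms of $H$.

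\begin{enumerate}[label=\textup{(\alph*)}]
\item \emph{Dimension.} Since $\rho_{\mC}=f_H$, we have $k=\rho_{\mC}([n])=f_H(\mE)=N-\kappa(H)$. Connectedness of $H$ gives $k=N-1$.
\item \emph{Dual distance.} Since $\mC$ is faithful, \cref{thm:dual-berge-girth} gives $d^\perp(\mC)=g_{\rm B}(H)$. The hypothesis $3\leq g_{\rm B}(H)<\infty$ is therefore exactly the hypothesis $3\leq d^\perp<\infty$ of \cref{thm:kung-block}. We also recall that linearity of $H$ is equivalent to $g_{\rm B}(H)\geq3$ (\cref{subsec:hypergraphs}), and that having a Berge cycle means $g_{\rm B}(H)<\infty$.
\item \emph{Kung-type bound.} \cref{thm:kung-block} then yields
$$\crit(\mC)\leq k-h(d^\perp-1)+2=(N-1)-h\bigl(g_{\rm B}(H)-1\bigr)+2=N-h\bigl(g_{\rm B}(H)-1\bigr)+1.$$
\end{enumerate}

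For the equivalent form, write $T:=N-h(g_{\rm B}(H)-1)+1$. This is an integer, and $\lceil\log_q x\rceil\leq T$ holds if and only if $x\leq q^T$ for $x\geq1$. Moreover $\chi_{\mathrm w}(H)\geq1$ is finite, because every edge has $h+1\geq2$ vertices. So the bound on $\crit(\mC)$ is equivalent to $\chi_{\mathrm w}(H)\leq q^{T}$.

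There is no real obstacle here. The only points needing care are using connectedness to get $\kappa(H)=1$, and checking that the hypotheses of \cref{thm:kung-block}, namely faithfulness and $3\leq d^\perp<\infty$, are supplied by \cref{thm:dual-berge-girth} together with linearity.
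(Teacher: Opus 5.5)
Your proposal is correct and follows the paper's proof exactly: $k=f_H(\mE)=N-1$ by connectedness, $d^\perp(\mC)=g_{\rm B}(H)$ by \cref{thm:dual-berge-girth}, \cref{thm:kung-block} with $k=N-1$ for the inequality, and \cref{thm:chromatic-critical} for the equality. Your extra check of the equivalent form, that $\lceil\log_q x\rceil\leq T$ if and only if $x\leq q^T$ for an integer $T$ and $x\geq1$, is a correct detail that the paper leaves implicit.
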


\begin{proof}
Since $H$ is connected, $\dim_{\F_q}\mC=f_H(\mE)=N-1$. By \cref{thm:dual-berge-girth}, $d^\perp(\mC)=g_{\rm B}(H)$, and \cref{thm:kung-block} with $k=N-1$ gives the bound. The equality is \cref{thm:chromatic-critical}.
\end{proof}

For the Fano plane, $g_{\rm B}(H)=3$ and \cref{cor:kung-hypergraph} gives $\crit(\mC_H)\leq4$, while $\crit(\mC_H)\leq2$ by \cref{ex:fano-crit}. For hypergraphic codes, \cref{cor:kung-hypergraph} is usually much weaker than the bounds of \cref{cor:crit-hypergraph-bounds} below; see \cref{tab:crit-hypergraphic}. For the code of \cref{ex:small-code}, $d^\perp(\mC)=g_{\rm B}(H_0)=2$, in accordance with \cref{thm:dual-berge-girth}.

\begin{corollary}\label{cor:crit-hypergraph-bounds}
Let $\mC$ be a hypergraphic additive code with underlying hypergraph $H$. Then
$$
\Bigl\lceil\log_q\Bigl\lceil\frac{N}{\alpha_{\mathrm w}(H)}\Bigr\rceil\Bigr\rceil\leq\crit(\mC)\leq\lceil\log_q(\Delta(H)+1)\rceil.
$$
If moreover $\mC$ is faithful, then $\crit(\mC)\leq\min\{\dim_{\F_q}\mC-h+1,\lfloor(\log_q m)/h\rfloor+1\}$.
\end{corollary}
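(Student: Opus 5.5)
Everything reduces to \cref{thm:chromatic-critical}: $\crit(\mC)=\lceil\log_q\chi_{\mathrm w}(H)\rceil$, and $x\mapsto\lceil\log_q x\rceil$ is nondecreasing. So the two hypergraph bounds follow from the colouring bounds
$$\lceil N/\alpha_{\mathrm w}(H)\rceil\leq\chi_{\mathrm w}(H)\leq\Delta(H)+1.$$

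For the lower bound, take a weak colouring with $\chi_{\mathrm w}(H)$ colours. No edge is monochromatic, so each colour class contains no edge. Hence each class has size at most $\alpha_{\mathrm w}(H)$, which gives $N\leq\chi_{\mathrm w}(H)\,\alpha_{\mathrm w}(H)$. Since $\chi_{\mathrm w}(H)$ is an integer, we get $\chi_{\mathrm w}(H)\geq\lceil N/\alpha_{\mathrm w}(H)\rceil$. Here $\alpha_{\mathrm w}(H)\geq1$ because every edge has at least two vertices, as noted after \cref{def:hypergraphic-additive}, so singletons contain no edge.

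For the upper bound I will colour greedily with $\Delta(H)+1$ colours. Order the vertices $v_1,\dots,v_N$ and colour them in turn. When colouring $v$, an edge $e\ni v$ can only become monochromatic if $v$ is its last vertex to be coloured and all other vertices of $e$ already share one colour $c_e$. Each such edge forbids at most one colour, and at most $\deg(v)\leq\Delta(H)$ edges contain $v$. So at least one of the $\Delta(H)+1$ colours is allowed for $v$.

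The resulting colouring is weak. Each edge has at least two vertices, and its last-coloured vertex was chosen to avoid monochromaticity, with all its other vertices already coloured at that moment. This greedy step is the only part that needs care: the forbidden colours must be counted per edge rather than per neighbour, and that count is what gives $\Delta(H)+1$ instead of the maximum degree of the $2$-section plus one.

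For the faithful statement, apply \cref{prop:crit-general-bounds}. Faithfulness gives $\rho_i=h$ for all $i$, so $\rho_{\min}=h$. The code length is $n=m$, the number of edges, and $k=\dim_{\F_q}\mC$. The last line of that proposition then reads $\crit(\mC)\leq\min\{k-h+1,\lfloor(\log_q m)/h\rfloor+1\}$, which is exactly the claim. \Cref{prop:crit-general-bounds} assumes nondegeneracy, and this holds by the definition of a hypergraphic code.
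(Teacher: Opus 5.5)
Your proof is correct and follows the paper's argument essentially step for step. The lower bound uses $N\leq\chi_{\mathrm w}(H)\,\alpha_{\mathrm w}(H)$ from the colour classes, the upper bound uses the greedy colouring in which each edge through $v$ forbids at most one colour, both are transferred via the Helgason--Whittle formula $\crit(\mC)=\lceil\log_q\chi_{\mathrm w}(H)\rceil$, and the faithful case is the general bound with $\rho_{\min}=h$ and $n=m$.
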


\begin{proof}
Every colour class of a weak colouring contains no edge, so $N\leq\chi_{\mathrm w}(H)\,\alpha_{\mathrm w}(H)$. For the upper bound, colour the vertices one by one in any order. When $v$ is coloured, an edge containing $v$ excludes at most one colour, namely when all its other vertices already have that same colour. Hence at most $\Delta(H)$ colours are excluded and $\chi_{\mathrm w}(H)\leq\Delta(H)+1$. Both bounds now follow from \cref{thm:chromatic-critical}. The last statement is \cref{prop:crit-general-bounds} with $\rho_{\min}=h$ and $n=m$.
\end{proof}

\section{Minimum distance and hypergraph connectivity}\label{sec:distance}

In this section we show that the minimum distance of a hypergraphic code with a connected underlying hypergraph is equal to the edge-connectivity of the hypergraph. We then combine this identity with the Moore bound, and we classify the hypergraphic QMDS codes.

We recall the notation of \cref{subsec:hypergraphs}. For $\emptyset\neq S\subsetneq V$, the edge cut $\delta_H(S)$ is the set of edges meeting both $S$ and $V\setminus S$, and for a connected hypergraph $H$ the edge-connectivity $\lambda(H)$ is the minimum size of an edge cut. We write $\delta_{\min}(H)$ for the minimum degree. Taking $S=\{v\}$ for a vertex $v$ of minimum degree gives
\begin{equation}\label{eq:lambda-delta}
\lambda(H)\leq\delta_{\min}(H),
\end{equation}
which we use repeatedly. Finally, if $\mC$ is hypergraphic with a connected underlying hypergraph~$H$, then $\dim_{\F_q}\mC=f_H(\mE)=N-1$.

\begin{theorem}\label{thm:distance-connectivity}
Let $\mC$ be a hypergraphic additive code with a connected underlying hypergraph $H$. Then $d(\mC)=\lambda(H)$. In particular, if $H$ is connected and $r$-uniform, then $\mC_H$ is an $\bigl[m,\frac{N-1}{r-1},\lambda(H)\bigr]_q^{r-1}$ code.
\end{theorem}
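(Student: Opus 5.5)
We will use \cref{lem:distance-rank}, which expresses $d(\mC)$ purely through the polymatroid: $d(\mC)=n-\max\{|A|:\rho_{\mC}(A)<k\}$. Since $H$ is connected and underlies $\mC$, we have $k=f_H(\mE)=N-1$ and $\rho_{\mC}=f_H$. The condition $\rho_{\mC}(A)<k$ is therefore equivalent to $\kappa(A)\geq2$, that is, the spanning subhypergraph $H_A$ is disconnected. Writing $B:=\mE\setminus A$ for the complement, we obtain
$$
d(\mC)=\min\{|B|:B\subseteq\mE,\ H_{\mE\setminus B}\text{ is disconnected}\}.
$$
The whole statement thus reduces to the purely combinatorial fact that this minimum equals $\lambda(H)$.

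For the inequality ``$\leq$'', take $\emptyset\neq S\subsetneq V$ with $|\delta_H(S)|=\lambda(H)$ and set $B:=\delta_H(S)$. Every edge of $\mE\setminus B$ lies entirely inside $S$ or entirely inside $V\setminus S$. Hence no walk in $H_{\mE\setminus B}$ joins a vertex of $S$ to a vertex of $V\setminus S$, so this subhypergraph is disconnected. This gives $d(\mC)\leq\lambda(H)$.

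For ``$\geq$'', take $B$ with $H_{\mE\setminus B}$ disconnected and let $S$ be the vertex set of one of its components, so that $\emptyset\neq S\subsetneq V$. Any edge meeting both $S$ and $V\setminus S$ cannot lie in $\mE\setminus B$, because the vertices of an edge are pairwise joined by walks. Hence $\delta_H(S)\subseteq B$ and $|B|\geq\lambda(H)$.

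Both directions need $N\geq2$ for $\lambda(H)$ to be defined. Nondegeneracy guarantees this, since every edge has at least two vertices. We also need $\mC\neq0$ to apply \cref{lem:distance-rank}, which holds because $k=N-1\geq1$. For the statement about $\mC_H$, we invoke \cref{thm:hypergraphic-rank}: $\mC_H$ is hypergraphic with underlying hypergraph $H$ and has dimension $N-1$ over $\F_q$. With $h=r-1$, its normalised dimension is $(N-1)/(r-1)$ and its length is $m$. I expect no step to be a real obstacle; the only point requiring care is the translation through \cref{lem:distance-rank} and the complement $B$.
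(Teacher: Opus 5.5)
Your proposal is correct and follows the paper's proof essentially step for step: you use \cref{lem:distance-rank} to identify $d(\mC)$ with the minimum number of edges whose deletion disconnects $H$, and you show this equals $\lambda(H)$ because deleting $\delta_H(S)$ disconnects $H$, while a component $S$ of $H_{\mE\setminus B}$ satisfies $\delta_H(S)\subseteq B$. Your extra checks that $N\geq2$ and $\mC\neq0$ are harmless details that the paper leaves implicit.
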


\begin{proof}
Since $H$ is connected, $k=f_H(\mE)=N-1$, and $\rho_{\mC}(A)<k$ if and only if $f_H(A)=N-\kappa(A)<N-1$, that is, if and only if $H_A$ is disconnected. By \cref{lem:distance-rank}, $d(\mC)$ is the minimum number of edges whose deletion disconnects $H$. For every $S$, deleting $\delta_H(S)$ separates $S$ from $V\setminus S$. Conversely, if deleting $R\subseteq \mE$ disconnects $H$ and $S$ is the vertex set of a component of $H_{\mE\setminus R}$, then $\delta_H(S)\subseteq R$. Hence the minimum is equal to  $\lambda(H)$. The parameters of $\mC_H$ follow from \cref{thm:hypergraphic-rank}.
\end{proof}

\begin{example}\label{ex:distance-examples}
For the Fano plane, $\lambda(H)=3$ by \cref{ex:fano}, so $\mC_H$ is a $[7,3,3]_q^2$ code for every~$q$. For the code of \cref{ex:small-code}, $d(\mC)=\lambda(H_0)=2$.
\end{example}

\begin{remark}[Graphs]\label{rem:graphic-case}
Let $\mG=(V,\mE)$ be a connected graph, with an orientation
chosen for each edge. In the construction of $\mC_{\mG}$,
an edge $e$ has two vertices, so $h=1$, and its coordinate
is the difference $a(v)-a(u)$ for $e$ oriented from $u$
to $v$. The code $\mC_{\mG}$ is the cut space of $\mG$:
it is generated by the codewords obtained from vertex
labellings that are $1$ at one vertex and $0$ elsewhere.
Its dual is the cycle space, since the relations among
the columns of the vertex-edge incidence matrix are
the flows with zero sum at every vertex. Thus
\cref{thm:distance-connectivity,thm:dual-berge-girth}
give $d(\mC_{\mG})=\lambda(\mG)$ and
$d^\perp(\mC_{\mG})=g(\mG)$, the classical cut-space and
cycle-space identities \cite{hakimi1968graph}. Moreover,
a weak colouring of a graph is a proper colouring, so
\cref{thm:chromatic-critical} gives
$\crit(\mC_{\mG})=\lceil\log_q\chi(\mG)\rceil$.
\end{remark}

\subsection{A girth-distance bound}

The Moore bound for hypergraphs
(\cite{hoory2002size,ellis2014regular,erskine2022small})
gives a relation between Berge girth, minimum degree and
the number of vertices. Together with the distance
identities above, it gives the following code bound. We include the short counting argument.

\begin{proposition}[Girth-distance bound]\label{prop:moore}
Let $\mC$ be a faithful hypergraphic additive code of
$\F_q$-dimension $k$ with connected underlying
$(h+1)$-uniform hypergraph $H$, let $d:=d(\mC)$, and let $t\geq1$. With the convention $0^0:=1$, the following hold.
\begin{enumerate}[label=\textup{(\roman*)}]
\item If $d^\perp(\mC)\geq2t+1$, then $\displaystyle k\geq h\,d\sum\limits_{j=0}^{t-1}\bigl(h(d-1)\bigr)^j$.
\item If $d^\perp(\mC)\geq2t$, then $\displaystyle k\geq(h+1)\sum\limits_{j=0}^{t-1}\bigl(h(d-1)\bigr)^j-1$.
\end{enumerate}
In particular, $k\geq hd$ if $d^\perp(\mC)\geq3$, $k\geq h+h(h+1)(d-1)$ if $d^\perp(\mC)\geq4$, and $k\geq hd+h^2d(d-1)$ if $d^\perp(\mC)\geq5$.
\end{proposition}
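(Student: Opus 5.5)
We translate everything into the hypergraph and run the Moore-type count in the incidence graph. Since $H$ is connected, $k=N-1$. By \cref{thm:distance-connectivity}, $d=\lambda(H)$, so \eqref{eq:lambda-delta} gives $\delta_{\min}(H)\geq d$. By \cref{thm:dual-berge-girth}, $g_{\rm B}(H)=d^\perp(\mC)$. Hence it suffices to show the following: an $(h+1)$-uniform hypergraph with minimum degree at least $d$ and Berge girth at least $2t+1$ (resp. $2t$) has $N-1$ at least the stated quantity. Equivalently, we need a lower bound on $N$ by a breadth-first count in the incidence graph $I$, whose cycles of length $2\ell$ are the Berge cycles of length $\ell$. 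So $I$ has girth at least $2g_{\rm B}(H)$.

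For (i), I will fix a vertex $v_0$ and explore the ball of radius $2t$ around $v_0$ in $I$. Because the girth of $I$ is at least $4t+2$, this ball is a tree. The vertex $v_0$ has at least $d$ incident edges. Each edge contributes $h$ new vertices. Each new vertex has at least $d-1$ further edges, each again contributing $h$ new vertices, and so on. The vertices at hypergraph-distance $j\geq1$ therefore number at least $hd\,(h(d-1))^{j-1}$, for $j=1,\ldots,t$. They are all distinct, since a coincidence would close a cycle of length at most $4t$ in $I$. Summing gives $N-1\geq hd\sum_{j=0}^{t-1}(h(d-1))^j$.

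For (ii), the girth of $I$ is only at least $4t$, so I will centre the count at an edge $e_0$ instead. Its $h+1$ vertices are the first level. Each level-$j$ vertex has at least $d-1$ other edges, each giving $h$ new vertices. This yields at least $(h+1)(h(d-1))^{j}$ vertices at distance $j$ from $e_0$, for $j=0,\ldots,t-1$. These are distinct because any repetition would create a cycle in $I$ of length at most $2(2t-1)<4t$. Then $N\geq(h+1)\sum_{j=0}^{t-1}(h(d-1))^j$, and subtracting $1$ gives the claim. The convention $0^0=1$ covers the case $d=1$, where only the $j=0$ term survives.

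The main obstacle is the distinctness claim, in particular the exact girth threshold in each case. I will verify it by noting that two distinct tree paths in $I$ from the centre to a common node of depth at most $2t$ (resp. $2t-1$ from the edge-centre) combine into a closed walk containing a cycle of length less than the girth of $I$. The particular cases then follow by substitution: $t=1$ in (i) gives $k\geq hd$, $t=2$ in (ii) gives $k\geq h+h(h+1)(d-1)$, and $t=2$ in (i) gives $k\geq hd+h^2d(d-1)$.
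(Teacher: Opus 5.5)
Your proposal is correct and takes essentially the same route as the paper: a Moore-type breadth-first count in the incidence graph, centred at a vertex for (i) and at an edge for (ii). In both, distinctness comes from the girth $2g_{\rm B}(H)=2d^\perp(\mC)$ of the incidence graph, and the degree bound is $\delta_{\min}(H)\geq\lambda(H)=d$. The only cosmetic difference is that you use $d$ as the lower bound on degrees from the start, whereas the paper counts with $\delta_{\min}(H)$ and then uses that the bound is nondecreasing in the degree.
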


\begin{proof}
Write $\delta:=\delta_{\min}(H)$ and $g:=g_{\rm B}(H)$, so that $g=d^\perp(\mC)$ by \cref{thm:dual-berge-girth}. The incidence graph $I$ of $H$ is bipartite and has girth $2g$. Fix a node $x$ of $I$ and an integer $\rho$ with $2\rho<2g$. If a node at distance $s+1\leq\rho$ from $x$ had two neighbours at distance $s$, then $I$ would contain a cycle of length at most $2\rho$. Hence every node at distance $s<\rho$ from $x$ has at least its degree minus one neighbours at distance $s+1$, and distinct nodes at distance $s$ have disjoint sets of such neighbours. Every vertex of $H$ has degree at least $\delta$ in $I$, and every edge of $H$ has degree $h+1$.

For (i), take for $x$ a vertex of $H$ and $\rho=2t\leq g-1$. There are at least $\delta$ nodes at distance $1$, and at least $h\delta\bigl(h(\delta-1)\bigr)^{j-1}$ vertices of $H$ at distance $2j$ for $1\leq j\leq t$. Hence $$N\geq1+h\delta\sum\limits_{j=0}^{t-1}(h(\delta-1))^j.$$ 
For (ii), take for $x$ an edge of $H$ and $\rho=2t-1\leq g-1$. There are $h+1$ vertices at distance $1$, and at least $(h+1)\bigl(h(\delta-1)\bigr)^j$ vertices at distance $2j+1$ for $0\leq j\leq t-1$. Hence $$N\geq(h+1)\sum\limits_{j=0}^{t-1}(h(\delta-1))^j.$$ 
In both cases the right-hand side is nondecreasing in $\delta\geq1$. Since $k=N-1$ and $d=\lambda(H)\leq\delta$ by \cref{thm:distance-connectivity} and \eqref{eq:lambda-delta}, the claims follow.
\end{proof}

For given $k$, $h$ and $d^\perp(\mC)$, \cref{prop:moore} bounds the minimum distance of every faithful hypergraphic code, and it is attained by several classical geometries. If $H$ is a Steiner system $S(2,h+1,N)$, that is, if every pair of vertices lies in exactly one edge, then $H$ is linear, has a Berge cycle of length $3$ when $N>h+1$, and is regular of degree $(N-1)/h$. Hence (i) with $t=1$ reads $d(\mC_H)\leq(N-1)/h=\delta_{\min}(H)$, and equality holds whenever $\lambda(H)=\delta_{\min}(H)$. This is the case for every projective plane of order $h\geq2$: if $V=S\sqcup T$ with $S,T\neq\emptyset$, then $S$ and $T$ cannot both contain a line, since two lines always meet; if $T$ contains no line, then the $h+1$ lines through a vertex of $T$ all lie in $\delta_H(S)$. Hence the code of a projective plane of order $h$ is an $[h^2+h+1,h+1,h+1]_q^h$ code with $d^\perp=3$ that attains (i). For Steiner triple systems see \cref{cor:sts}. Similarly, a generalised quadrangle with $h+1$ points on each line and $\delta$ lines through each point has $N=(h+1)(h(\delta-1)+1)$ points and Berge girth $4$, and a generalised hexagon has $N=(h+1)\bigl(1+h(\delta-1)+h^2(\delta-1)^2\bigr)$ points and Berge girth $6$. So (ii), with $t=2$ and $t=3$ respectively, is attained whenever $\lambda(H)=\delta$; by \cref{tab:distance-hypergraphic} this holds, for instance, for the generalised quadrangles of orders $(2,1)$, $(2,2)$, $(2,4)$ and $(4,2)$ and for the split Cayley hexagon of order $(2,2)$.

The girth-distance bound can also exclude parameters that the degree bound allows. If $h=2$, $N=20$ and $g_{\rm B}(H)\geq5$,
then $d(\mC)\leq2$. Indeed, if $d(\mC)\geq3$, (i) with $t=2$ would
give $k\geq2\cdot3+2^2\cdot3\cdot2=30$, whereas
$k=N-1=19$. The degree bound
$d(\mC)\leq\lfloor3m/20\rfloor$ does not exclude
$d(\mC)=3$ when $m\geq20$.

\subsection{Hypergraphic QMDS codes}\label{subsec:structural-nonexistence}

We now determine the hypergraphic QMDS codes with a connected underlying hypergraph. We first refine the Singleton bound \eqref{eq:singleton} in terms of the minimum number of coordinates on which the projection of the code is injective. For an additive code $\mC$ of $\F_q$-dimension $k$ let $\tau(\mC):=\min\{|A|:A\subseteq[n],\ \rho_{\mC}(A)=k\}$. For a hypergraphic code with a connected underlying hypergraph, $\tau(\mC)$ is the minimum number of edges of a connected spanning subhypergraph.

\begin{lemma}\label{lem:information-set}
Every nonzero additive code $\mC\subseteq\F_{q^h}^n$ of $\F_q$-dimension $k$ satisfies $d(\mC)\leq n-\tau(\mC)+1$ and $\tau(\mC)\geq\lceil k/h\rceil$. In particular, it satisfies the Singleton bound \eqref{eq:singleton}.
\end{lemma}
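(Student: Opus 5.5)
Both inequalities come from the polymatroid $P_{\mC}$: the first from \cref{lem:distance-rank} and the second from subadditivity of $\rho_{\mC}$ together with the $h$-polymatroid property $\rho_{\mC}(\{i\})\leq h$.

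For $d(\mC)\leq n-\tau(\mC)+1$, we need a set $A$ with $|A|=\tau(\mC)-1$ and $\rho_{\mC}(A)<k$. Since $\mC\neq0$, we have $k\geq1$ and $\rho_{\mC}(\emptyset)=0<k$, so $\tau(\mC)\geq1$. Let $T$ attain $\tau(\mC)$, and remove any one element to obtain $A$ with $|A|=\tau(\mC)-1$. By minimality of $\tau(\mC)$ we have $\rho_{\mC}(A)<k$. \Cref{lem:distance-rank} then gives $n-d(\mC)\geq|A|=\tau(\mC)-1$, which is the first inequality. Equivalently, some nonzero codeword vanishes on $A$, because $\mC(A)\neq0$.

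For $\tau(\mC)\geq\lceil k/h\rceil$, let $T$ satisfy $\rho_{\mC}(T)=k$. Submodularity with $\rho_{\mC}(\emptyset)=0$ gives subadditivity, so $k=\rho_{\mC}(T)\leq\sum_{i\in T}\rho_{\mC}(\{i\})\leq h|T|$. Alternatively, \cref{prop:representable-code-polymatroid} gives $k=\dim\sum_{i\in T}U_i\leq\sum_{i\in T}\dim U_i\leq h|T|$. Hence $|T|\geq k/h$, and since $|T|$ is an integer, $|T|\geq\lceil k/h\rceil$. Taking $T$ of minimum size gives the second inequality.

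Combining the two inequalities yields $d(\mC)\leq n-\lceil k/h\rceil+1$, which is \eqref{eq:singleton}. No step here is a real obstacle. The only point needing care is the first inequality: we must check $\tau(\mC)\geq1$ (which uses $k\geq1$) so that the deleted set $A$ exists, and we must note that $\tau$ is well defined because $\rho_{\mC}([n])=k$.
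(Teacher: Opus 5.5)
Your proposal is correct and follows the paper's proof essentially verbatim: you delete one element from a minimum spanning set and apply \cref{lem:distance-rank}, and you bound $k$ by subadditivity together with $\rho_{\mC}(\{i\})\leq h$. Your extra check that $\tau(\mC)\geq1$, which follows from $k\geq1$, is a small detail the paper leaves implicit.
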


\begin{proof}
Let $A$ attain $\tau(\mC)$ and $a\in A$. By minimality, $\rho_{\mC}(A\setminus\{a\})<k$, so \cref{lem:distance-rank} gives $n-d(\mC)\geq\tau(\mC)-1$. Moreover $k=\rho_{\mC}(A)\leq\sum\limits_{i\in A}\rho_{\mC}(\{i\})\leq h\tau(\mC)$.
\end{proof}

For a connected hypergraph $H$, let $\tau(H)$ be the minimum number of edges of a connected spanning subhypergraph $H_A$.

\begin{proposition}\label{prop:spanning-singleton}
Let $\mC$ be a faithful hypergraphic additive code with a connected underlying hypergraph $H$ with $N$ vertices and $m$ edges. Then
$$
d(\mC)=\lambda(H)\leq m-\tau(H)+1\leq m-\Bigl\lceil\frac{N-1}{h}\Bigr\rceil+1,
$$
and the rightmost term is the Singleton bound for $\mC$.
\end{proposition}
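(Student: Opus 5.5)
The plan is to chain together results already established, so each link is a short verification.

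The equality $d(\mC)=\lambda(H)$ is exactly \cref{thm:distance-connectivity}, since $H$ is connected. For the middle inequality, apply \cref{lem:information-set} with $n=m$ to get $d(\mC)\leq m-\tau(\mC)+1$. It then suffices to identify $\tau(\mC)$ with $\tau(H)$. Since $H$ is connected, $k=f_H(\mE)=N-1$, and $\rho_{\mC}(A)=f_H(A)=N-\kappa(A)$ equals $k$ if and only if $\kappa(A)=1$, that is, if and only if $H_A$ is connected. Hence the sets $A$ with $\rho_{\mC}(A)=k$ are exactly the edge sets of connected spanning subhypergraphs, and $\tau(\mC)=\tau(H)$.

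For the last inequality, \cref{lem:information-set} gives $\tau(\mC)\geq\lceil k/h\rceil=\lceil(N-1)/h\rceil$. One could also see this directly: $N-1=f_H(A)\leq\sum_{e\in A}(|e|-1)=h|A|$, using subadditivity and \cref{cor:uniformity-fullrank}, which says $H$ is $(h+1)$-uniform because $\mC$ is faithful. Substituting gives $m-\tau(H)+1\leq m-\lceil(N-1)/h\rceil+1$.

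Finally, the rightmost term equals $n-\lceil k/h\rceil+1$ with $n=m$ and $k=N-1$, which is \eqref{eq:singleton}. There is no real obstacle. The only point needing care is the identification $\tau(\mC)=\tau(H)$, which rests on connectivity of $H$ to make $k=N-1$.
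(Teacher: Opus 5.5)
Your proposal is correct and matches the paper's proof: identify $\tau(\mC)=\tau(H)$ using that $\rho_{\mC}(A)=N-\kappa(A)$ equals $k=N-1$ exactly when $H_A$ is connected, take the equality from \cref{thm:distance-connectivity}, and get both inequalities from \cref{lem:information-set} with $n=m$ and $k=N-1$. Your direct check $N-1=f_H(A)\leq h|A|$, via subadditivity and $(h+1)$-uniformity, is a harmless extra that the lemma already covers.
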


\begin{proof}
Since $\rho_{\mC}(A)=N-\kappa(A)$ is equal to  $k=N-1$ exactly when $H_A$ is connected, $\tau(\mC)=\tau(H)$. The equality is \cref{thm:distance-connectivity}, and both inequalities follow from \cref{lem:information-set} with $n=m$ and $k=N-1$.
\end{proof}

For the rest of this subsection, $\mC$ is a faithful hypergraphic additive code and $H=(V,\mE,\psi_H)$ is a connected underlying hypergraph of $\mC$ with $N$ vertices and $m$ edges. By \cref{cor:uniformity-fullrank}, $H$ is $(h+1)$-uniform. Set $\sigma:=\lceil(N-1)/h\rceil$. Since $\dim_{\F_q}\mC=N-1$, the Singleton bound reads $d(\mC)\leq m-\sigma+1$, and $N-1=\rho_{\mC}(\mE)\leq hm$ gives $m\geq\sigma$.

\begin{proposition}\label{prop:singleton-structure}
If $\mC$ is QMDS, then $\tau(H)=\sigma$, $\lambda(H)=m-\sigma+1$, and
\begin{equation}\label{eq:singleton-degree-count}
N(m-\sigma+1)\leq N\delta_{\min}(H)\leq(h+1)m.
\end{equation}
\end{proposition}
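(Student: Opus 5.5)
The argument is a direct squeeze using \cref{prop:spanning-singleton}. Assume $\mC$ is QMDS, so $d(\mC)=m-\sigma+1$. By \cref{prop:spanning-singleton}, $m-\sigma+1=d(\mC)=\lambda(H)\leq m-\tau(H)+1\leq m-\sigma+1$. Hence every inequality in this chain is an equality. This gives $\lambda(H)=m-\sigma+1$ and $m-\tau(H)+1=m-\sigma+1$, that is, $\tau(H)=\sigma$. No further input is needed for these two identities. The only subtlety is that \cref{thm:distance-connectivity} requires $H$ to be connected, which is part of the standing hypotheses of the subsection.

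For the degree count \eqref{eq:singleton-degree-count}, I would start with the left inequality. It follows from $\lambda(H)\leq\delta_{\min}(H)$ in \eqref{eq:lambda-delta}: substitute $\lambda(H)=m-\sigma+1$ and multiply by $N$. This step needs $N\geq2$ for $\lambda(H)$ to be defined. That holds because $H$ is $(h+1)$-uniform with $h\geq1$ by \cref{cor:uniformity-fullrank}, and $m\geq\sigma\geq1$ since every edge has at least two vertices; if $m=0$ the code would be zero, so $m\geq1$.

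For the right inequality, double count vertex-edge incidences. Summing degrees over all vertices gives $\sum_{v\in V}\deg(v)=\sum_{e\in\mE}|e|=(h+1)m$ by uniformity. The left side is at least $N\delta_{\min}(H)$.

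None of the steps is a real obstacle. The only point needing care is checking that the definitions align: $\lambda(H)$ must be well defined ($H$ connected with $N\geq2$), and $\tau(H)$ must coincide with $\tau(\mC)$. The latter was already shown in the proof of \cref{prop:spanning-singleton}.
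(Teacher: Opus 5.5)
Your proof is correct and follows the paper's argument exactly: QMDS forces equality throughout the chain of \cref{prop:spanning-singleton}, the left inequality is \eqref{eq:lambda-delta}, and the right one is the degree sum $(h+1)m$ of an $(h+1)$-uniform hypergraph. One small point: your justification of $N\geq2$ is slightly circular as phrased, since $\sigma\geq1$ already presupposes $N\geq2$; instead use $m=n\geq1$ and $(h+1)$-uniformity, which give $N\geq h+1\geq2$ directly.
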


\begin{proof}
If $\mC$ is QMDS, equality holds throughout \cref{prop:spanning-singleton}, which gives $\tau(H)=\sigma$ and $\lambda(H)=m-\sigma+1$. The first inequality in \eqref{eq:singleton-degree-count} is \eqref{eq:lambda-delta}, and the second holds because the degrees of the $(h+1)$-uniform hypergraph $H$ sum to $(h+1)m$.
\end{proof}

\begin{theorem}[Hypergraphic QMDS codes]\label{thm:hypergraphic-singleton-classification}
Assume $h\geq2$. With the notation above, the following hold.
\begin{enumerate}[label=\textup{(\roman*)}]
\item If $\sigma=1$, then $N=h+1$, every edge is equal to  $V$, and $\mC$ is QMDS with $d(\mC)=m$.
\item If $\sigma=2$, set $\overline{e}:=V\setminus e$ for $e\in \mE$. Then $\mC$ is QMDS if and only if the sets $\overline{e}$, $e\in\mE$, are pairwise disjoint, and in that case $d(\mC)=m-1$.
\item If $\sigma\geq3$, then $\mC$ is QMDS if and only if either $m=\sigma$, in which case $d(\mC)=1$, or $\sigma=3$, $m=4$, $N=2h+2$, every vertex lies in exactly two edges, and deleting any one edge leaves a connected spanning hypergraph, in which case $d(\mC)=2$.
\end{enumerate}
Consequently, if $N>h+1$, every QMDS code in this class with $d(\mC)\geq2$ has length $m\leq h+2$. This bound is attained for every $h\geq2$ and every prime power $q$.
\end{theorem}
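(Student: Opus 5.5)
The plan is to reduce everything to the single condition $\lambda(H)=m-\sigma+1$. By \cref{thm:distance-connectivity}, $d(\mC)=\lambda(H)$, and by \cref{prop:spanning-singleton} the Singleton bound reads $d(\mC)\leq m-\sigma+1$. So $\mC$ is QMDS if and only if $\lambda(H)=m-\sigma+1$. Throughout I use that $H$ is $(h+1)$-uniform (\cref{cor:uniformity-fullrank}) and that an edge avoids a cut $\delta_H(S)$ exactly when it lies inside $S$ or inside $V\setminus S$.

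\textbf{Case (i), $\sigma=1$.} Here $N-1\leq h$, while every edge has $h+1$ vertices, so $N=h+1$ and every edge equals $V$. Then every edge meets both sides of every cut, so $\lambda(H)=m=m-\sigma+1$.

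\textbf{Case (ii), $\sigma=2$.} Here $h+2\leq N\leq 2h+1$. First, $m\geq2$, since a single edge cannot span $N>h+1$ vertices. Second, two edges $e,f$ cannot lie on opposite sides of a cut, since $|e|+|f|=2h+2>N$ forces them to intersect. Hence a cut misses two edges $e,f$ exactly when both lie on one side, which forces $e\cup f\neq V$, that is, $\overline e\cap\overline f\neq\emptyset$. Conversely, if $v\in\overline e\cap\overline f$, then $S=V\setminus\{v\}$ gives a cut missing both edges, so $\lambda(H)\leq m-2$. Thus $\lambda(H)\geq m-1$, which is QMDS, holds if and only if the complements are pairwise disjoint.

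\textbf{Case (iii), $\sigma\geq3$.} If $m=\sigma$, the Singleton bound gives $d\leq1$, and connectivity gives $\lambda\geq1$, so $\mC$ is QMDS with $d=1$. Otherwise set $x:=m-\sigma+1\geq2$. I use \eqref{eq:singleton-degree-count} together with $N\geq h(\sigma-1)+2$, which follows from the definition of $\sigma$. This gives
$$(h\sigma-h+2)x\leq(h+1)(x+\sigma-1),$$
that is,
$$x(h\sigma-2h+1)\leq(h+1)(\sigma-1).$$
Inserting $x\geq2$ yields $(h-1)(\sigma-3)\leq0$, so $\sigma=3$ because $h\geq2$. Then $x(h+1)\leq2(h+1)$ gives $x=2$ and $m=4$. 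The chain $2N\leq N\delta_{\min}\leq4(h+1)$ combined with $N\geq2h+2$ forces $N=2h+2$ and equality throughout. This makes $H$ regular of degree $2$, and $\lambda=2$ means that deleting one edge keeps $H$ connected. Conversely, under these conditions $\lambda\geq2=m-\sigma+1$, so $\mC$ is QMDS. This inequality bookkeeping is the main obstacle; the key point is to combine the lower bound on $N$ coming from $\sigma$ with the degree count.

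\textbf{Consequence and attainment.} If $N>h+1$ and $d\geq2$, then $\sigma\geq2$. If $\sigma=2$, the complements $\overline e$ are pairwise disjoint and each has size $s:=N-h-1\geq1$, so
$$m\leq\frac{N}{s}=\frac{h+1+s}{s}\leq h+2.$$
If $\sigma\geq3$, then $d\geq2$ excludes $m=\sigma$, so $m=4\leq h+2$. For attainment, take $N=h+2$ and all $h+2$ edges of the form $V\setminus\{v\}$. The complements are distinct singletons, so by (ii) the code $\mC_H$ of \cref{def:hypergraphic-code} is QMDS over every $\F_q$ by \cref{thm:hypergraphic-rank}, with $m=h+2$ and $d=h+1\geq2$.
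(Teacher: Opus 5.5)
Your proposal is correct and follows essentially the same route as the paper. You reduce QMDS to $\lambda(H)=m-\sigma+1$, use that two $(h+1)$-subsets must intersect when $\sigma=2$, combine $N\geq h(\sigma-1)+2$ with \eqref{eq:singleton-degree-count} to force $\sigma=3$, $m=4$, $N=2h+2$, and bound $m$ through the disjoint complements. The only differences are cosmetic: you phrase (ii) via cuts missing two edges rather than spanning pairs of edges, and in the converse of (iii) you get $\lambda(H)\leq2$ from the Singleton bound instead of by isolating a vertex.
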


\begin{proof}
If $\sigma=1$, then $N\leq h+1$, and since edges have $h+1$ vertices, $N=h+1$ and every edge is equal to  $V$. Deleting fewer than $m$ edges leaves a connected spanning hypergraph, so $\lambda(H)=m=m-\sigma+1$.

Suppose that $\sigma=2$, so $h+2\leq N\leq2h+1$. A single edge does not span $V$, so $\lambda(H)\leq m-1$, and $\mC$ is QMDS if and only if every pair of edges forms a connected spanning hypergraph. Any two $(h+1)$-subsets of $V$ intersect because $2(h+1)>N$, so a pair of edges forms a connected spanning hypergraph if and only if its union is $V$, that is, if and only if $\overline{e}\cap\overline{f}=\emptyset$. This proves (ii).

Let $\sigma\geq3$. If $m=\sigma$, then $\tau(H)\geq\sigma=m$ and \cref{prop:spanning-singleton} give $\lambda(H)\leq1$. Since $H$ is connected, $\lambda(H)=1=m-\sigma+1$, so $\mC$ is QMDS. It remains to treat QMDS codes with $m\geq\sigma+1$, that is, with $d(\mC)\geq2$. The definition of $\sigma$ gives $N\geq(\sigma-1)h+2>h+1$, and \eqref{eq:singleton-degree-count} rearranges to $m(N-h-1)\leq N(\sigma-1)$. Using $m\geq\sigma+1$ we obtain $2N\leq(h+1)(\sigma+1)$, hence $2\bigl((\sigma-1)h+2\bigr)\leq(h+1)(\sigma+1)$, which is equivalent to $(\sigma-3)(h-1)\leq0$. As $h\geq2$, this forces $\sigma=3$. Then $N\geq2h+2$, and \eqref{eq:singleton-degree-count} gives $(2h+2)(m-2)\leq N(m-2)\leq(h+1)m$, hence $m\leq4$. Thus $m=4$, and equality forces $N=2h+2$. Since $\lambda(H)=2$, every vertex has degree at least two by \eqref{eq:lambda-delta}, and the degrees sum to $4(h+1)=2N$, so every vertex has degree exactly two. Moreover, deleting one edge leaves $H$ connected. Conversely, under these conditions $\lambda(H)\geq2$, and isolating a vertex shows $\lambda(H)\leq2$, so $d(\mC)=2=m-\sigma+1$. This proves (iii).

For the length bound, let $N>h+1$ and $d(\mC)\geq2$, so $\sigma\geq2$ and $m\neq\sigma$ when $\sigma\geq3$. If $\sigma\geq3$, then $m=4\leq h+2$ by (iii). If $\sigma=2$, write $N=h+1+s$ with $1\leq s\leq h$. The pairwise disjoint sets $\overline{e}$ have size $s$, so $ms\leq h+1+s$ and $m\leq1+(h+1)/s\leq h+2$. Let $|V|=h+2$ and take all $(h+1)$-subsets of $V$ as edges. Their complements are distinct singletons, so by (ii) the code $\mC_H$ is QMDS with $m=h+2$ and $d=h+1$.
\end{proof}

The hypothesis $N>h+1$ in the length bound cannot be dropped. In fact, for $\sigma=1$, taking $m$ copies of the edge $V$ gives QMDS codes with $d=m$ for every $m$.

In the exceptional case of \cref{thm:hypergraphic-singleton-classification}(iii) with $m=4$, every vertex of $H$ lies in exactly two of the four edges. Such a hypergraph is determined by recording, for each vertex, the pair of edges containing it, that is, by a graph on the four edges. We make this precise and show that the case occurs for every $h\geq2$ and over every finite field. For a graph $\mG$ and a vertex $i$, we write $\mG-i$ for the graph obtained by deleting $i$ and the edges incident with it.

\begin{proposition}[The four-edge family]\label{prop:four-edge-family}
Let $h\geq2$ and $r:=h+1$. For a $2$-uniform hypergraph $\mG$ on $[4]$, that is, a graph with possibly parallel edges, let $H_{\mG}$ be its \textbf{dual hypergraph}: its vertices are the edges of $\mG$, and its edges are $e_i:=\{\text{edges of $\mG$ incident with }i\}$ for $i\in[4]$.
\begin{enumerate}[label=\textup{(\alph*)}]
\item The $r$-uniform hypergraphs with four edges in which every vertex lies in exactly two edges and deleting any one edge leaves a connected spanning hypergraph are, up to isomorphism, exactly the $H_{\mG}$ with $\mG$ $r$-regular and $\mG-i$ connected for every $i\in[4]$.
\item Such a $\mG$ exists for every $h\geq2$. Hence for every $h\geq2$ and every prime power $q$ there is a QMDS $[4,(2h+1)/h,2]_q^h$ code $\mC_{H_{\mG}}$.
\end{enumerate}
\end{proposition}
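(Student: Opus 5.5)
For (a) I will use hypergraph duality, i.e.\ transposing the incidence structure. Let $H$ be $r$-uniform with edges $e_1,\ldots,e_4$, and suppose every vertex lies in exactly two edges. Define a multigraph $\mG$ on $[4]$ by giving each vertex $v$ of $H$ an edge joining $i$ and $j$, where $e_i,e_j$ are the two edges containing $v$. Parallel edges arise exactly when several vertices lie in the same pair of edges. The $H$-edges $e_i$ have no common vertices with each other beyond these pairs, and $e_i$ is exactly the set of $\mG$-edges incident with $i$. Hence $H\cong H_{\mG}$, and $|e_i|=r$ says $\deg_{\mG}(i)=r$. Conversely, for any such $\mG$ the hypergraph $H_{\mG}$ has every vertex in exactly two edges, since each $\mG$-edge has two distinct ends because there are no loops. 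The isomorphism claim up to relabelling is then routine.

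The point needing care in (a) is translating the connectivity condition. Deleting $e_i$ from $H_{\mG}$ leaves the spanning hypergraph on all $\mG$-edges with hyperedges $e_j$, $j\neq i$. Its incidence graph is the subdivision of $\mG$ in which vertex $i$ is removed but all edges are kept, with edges at $i$ becoming pendant. So $H_{\mG}$ minus $e_i$ is connected if and only if every $\mG$-edge is incident with some $j\neq i$, which holds automatically without loops, and the $j\neq i$ are linked through $\mG$-edges avoiding $i$. That is, $\mG-i$ is connected. The pendant edges at $i$ are fine because each has its other end in $\mG-i$. I would also record that $\mG-i$ connected for all $i$ forces $\mG$ itself to be connected. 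This gives connectivity of $H_{\mG}$, as required by the hypothesis of \cref{thm:hypergraphic-singleton-classification}.

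For (b) I will give an explicit construction. Take $\mG$ to be $K_4$ with its edge multiplicities increased. If $r=3$, use $K_4$ itself. For general $r$, note that $K_4$ decomposes into three perfect matchings $M_1,M_2,M_3$. Choose positive multiplicities $a_1,a_2,a_3$ with $a_1+a_2+a_3=r$ and give each edge of $M_\ell$ multiplicity $a_\ell$. This is possible when $r\geq3$, i.e.\ $h\geq2$. The result is $r$-regular. Its underlying simple graph is still $K_4$, so $\mG-i$ contains a triangle and is connected.

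Then $H_{\mG}$ has $N=|E(\mG)|=2r=2h+2$ vertices, four $r$-edges, and each vertex has degree two. Hence $\sigma=\lceil(2h+1)/h\rceil=3$ because $h\geq2$. By (a), the conditions of \cref{thm:hypergraphic-singleton-classification}(iii) hold, so $\mC_{H_{\mG}}$ is QMDS with $d=2$. By \cref{thm:hypergraphic-rank} it is faithful with $\F_q$-dimension $N-1=2h+1$, for every $q$. This gives the stated $[4,(2h+1)/h,2]_q^h$ parameters. I expect the only real obstacle to be the connectivity translation in (a), specifically handling the pendant edges at $i$ and the isolated-vertex convention correctly.
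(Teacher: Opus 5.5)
Your proposal is correct and follows the paper's proof: the same transposition of the incidence structure gives (a), with connectivity of $\mG-i$ matching connectivity of $H_{\mG}$ after deleting $e_i$, and (b) is then read off from \cref{thm:hypergraphic-singleton-classification}(iii) and \cref{thm:hypergraphic-rank}. The only difference is the example in (b). The paper splits by the parity of $r$: it uses $r/2$ copies of a $4$-cycle, or $K_4$ plus $(r-3)/2$ copies of a $4$-cycle. Your positive multiplicities $a_1+a_2+a_3=r$ on a $1$-factorization of $K_4$ treat all $r\geq3$ at once, and the paper's graphs are the cases $(r/2,r/2,0)$ and $((r-1)/2,(r-1)/2,1)$ of the same scheme with a zero multiplicity allowed.
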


\begin{proof}
Let $H$ have edges $e_1,\ldots,e_4$ and suppose that every vertex lies in exactly two of them. Let $\mG$ be the graph on $[4]$ with one edge $\{i,j\}$ for each vertex of $H$ lying in $e_i\cap e_j$. Then $H\cong H_{\mG}$, and conversely every vertex of $H_{\mG}$ lies in exactly the two edges indexed by its endpoints. Since $|e_i|=\deg_{\mG}(i)$, the hypergraph $H_{\mG}$ is $r$-uniform if and only if $\mG$ is $r$-regular, and then $N=|E(\mG)|=2r=2h+2$. Every edge of $\mG$ has an endpoint different from $i$, so the edges $e_j$, $j\neq i$, cover all vertices of $H_{\mG}$. Moreover $e_j\cap e_l$ is the set of edges of $\mG$ joining $j$ and $l$, so deleting $e_i$ from $H_{\mG}$ leaves a connected spanning hypergraph if and only if $\mG-i$ is connected. This proves (a).

For (b), if $r$ is even take $r/2$ copies of a $4$-cycle, and if $r$ is odd take $K_4$ together with $(r-3)/2$ copies of a $4$-cycle. In both cases $\mG$ is $r$-regular and $\mG-i$ is connected for every $i$. The hypergraph $H_{\mG}$ is connected, has $N=2h+2$ vertices, so $\sigma=\lceil(2h+1)/h\rceil=3$, and by (a) and \cref{thm:hypergraphic-singleton-classification}(iii) the code $\mC_{H_{\mG}}$ is QMDS with $d=2$. Its $\F_q$-dimension is $N-1=2h+1$.
\end{proof}

\begin{example}\label{ex:qmds-examples}
For $h=2$ and $\mG=K_4$, the dual hypergraph $H_{\mG}$ is the Pasch configuration: its six points are the edges of $K_4$, its four lines are the stars of the vertices of $K_4$, and every point lies on exactly two lines. By \cref{prop:four-edge-family}, $\mC_{H_{\mG}}$ is a QMDS $[4,5/2,2]_q^2$ code for every $q$. For $h=3$, partition a six-element set $V$ into three pairs $P_1,P_2,P_3$ and let $H_1$ be the hypergraph on $V$ with edges $e_i:=V\setminus P_i$. By \cref{thm:hypergraphic-singleton-classification}(ii), $\mC_{H_1}$ is a QMDS $[3,5/3,2]_q^3$ code. At the maximal nontrivial length, $H=K_{h+2}^{(h+1)}$ gives a QMDS $[h+2,(h+1)/h,h+1]_q^h$ code. For $h=2$ this is $K_4^{(3)}$, which gives a QMDS $[4,3/2,3]_q^2$ code for every $q$. The code of \cref{ex:small-code} is QMDS by (ii), since the complements of the edges of $H_0$ are $\{4\},\{3\},\{2\}$. The Fano code of \cref{ex:distance-examples} has $\sigma=3$ and $m=7$, so it is not QMDS.
\end{example}

\section{Further bounds}\label{sec:lp}

In this section we look at two bounds that hold for every additive code, the Griesmer bound and the linear programming bound. For hypergraphic codes they become necessary conditions on the parameters $N$, $m$, $\lambda(H)$ and $g_{\rm B}(H)$ of a hypergraph. We also bound the minimum distance from below by an eigenvalue of a matrix attached to the hypergraph. Finally, we compare all these bounds on examples, both for hypergraphic codes and for known optimal additive codes.

\subsection{Griesmer and linear programming bounds}\label{subsec:delsarte}

For positive integers $k$ and $D$ let $g_q(k,D):=\sum\limits_{i=0}^{k-1}\lceil D/q^i\rceil$. The Griesmer bound states that every linear $[n,k,D]_q$ code satisfies $n\geq g_q(k,D)$ \cite{huffman2003fundamentals}. Its additive version is the following.

\begin{theorem}[Additive Griesmer bound {\cite[Theorem~12]{ball2025griesmer}}]\label{thm:additive-griesmer}
Every faithful additive $[n,k/h,d]_q^h$ code satisfies $[h]_qn\geq g_q(k,q^{h-1}d)$.
\end{theorem}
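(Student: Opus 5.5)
The plan is to pass to the $h$-projective system and run the classical Griesmer induction, now with subspaces instead of points. By faithfulness, $\mX(\mC)=\{\PG(U_1),\ldots,\PG(U_n)\}$ consists of $n$ subspaces of $\PG(k-1,q)$ of projective dimension exactly $h-1$. We use the weighted point multiset $\mathcal P$ in which every point of $\PG(k-1,q)$ is counted with multiplicity equal to the number of $U_i$ containing it. Then $|\mathcal P|=[h]_qn$.

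The key observation is that $\mathcal P$ is a multiset of points whose associated linear code $\mD$ has length $[h]_qn$ and dimension $k$. We want to show its minimum distance is at least $q^{h-1}d$. For a hyperplane $\Lambda$, each $U_i\subseteq\Lambda$ contributes $[h]_q$ points to $\PG(\Lambda)$. Each $U_i\not\subseteq\Lambda$ meets $\Lambda$ in dimension $h-1$ and so contributes $[h-1]_q$ points. At most $n-d$ subspaces lie in $\Lambda$, so
\[
|\mathcal P\cap\PG(\Lambda)|\leq(n-d)[h]_q+d[h-1]_q=[h]_qn-d\bigl([h]_q-[h-1]_q\bigr)=[h]_qn-q^{h-1}d.
\]
Hence every hyperplane misses at least $q^{h-1}d$ points of $\mathcal P$ with multiplicity. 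Equivalently, the $\F_q$-linear code $\mD$ obtained by evaluating linear forms on a list of representatives of $\mathcal P$ has parameters $[[h]_qn,k,\geq q^{h-1}d]_q$. The points of $\mathcal P$ span $\F_q^k$, since the $U_i$ are not all in a hyperplane, so $\mD$ really has dimension $k$.

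Next we apply the classical Griesmer bound $n'\geq g_q(k,D)$ to $\mD$ with $D\geq q^{h-1}d$. Since $g_q(k,\cdot)$ is nondecreasing in $D$, this gives $[h]_qn\geq g_q(k,q^{h-1}d)$.

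The main obstacle is justifying the linear-code step cleanly. Two points need care. The point multiset may contain repeated points, but the classical Griesmer bound holds for codes with repeated columns. We must also confirm the dimension of $\mD$, which the spanning argument above handles. The bound $D\geq q^{h-1}d$ rather than equality suffices by monotonicity. Alternatively, one can cite \cite[Theorem~12]{ball2025griesmer}, where this is proved in this form.
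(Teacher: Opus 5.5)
Your proof is correct and is essentially the argument the paper relies on: the paper quotes the result from \cite[Theorem~12]{ball2025griesmer} and, right after the statement, describes exactly this replacement of each subspace of $\mX(\mC)$ by its $[h]_q$ points, which yields a linear $\bigl[[h]_qn,k,q^{h-1}d\bigr]_q$ code to which the classical Griesmer bound applies. Your hyperplane count $(n-d)[h]_q+d[h-1]_q=[h]_qn-q^{h-1}d$, your spanning check for the dimension, and the monotonicity of $g_q(k,\cdot)$ supply the details that the paper leaves to the citation.
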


By \cref{thm:distance-connectivity}, if $\mC$ is a faithful hypergraphic additive code with a connected underlying hypergraph $H$ on $N$ vertices and $m$ edges, this reads
$$
g_q\bigl(N-1,q^{h-1}\lambda(H)\bigr)\leq[h]_qm.
$$
Stronger bounds can be obtained by replacing each subspace of $\mX(\mC)$ by the $[h]_q$ points it contains. The resulting multiset of points defines a linear $\bigl[[h]_qn,k,q^{h-1}d\bigr]_q$ code in which the weight of every codeword is divisible by $q^{h-1}$ \cite[Lemma~4]{kurz2024additive}, and the known bounds for linear codes can be applied to it; see \cite[Definitions~4 and~7]{kurz2024additive}, \cite{kurz2024optimal} and \cite[Theorem~9]{ball2025griesmer}.

We now turn to the linear programming bound. The idea is simple. The numbers of codewords of each weight in a code and in its dual are related by the MacWilliams identities, which are linear equations. So if a code with given parameters exists, these numbers are a nonnegative solution of a certain system of linear equations, and if the system has no such solution, the code does not exist. This linear program can be solved exactly by computer.

To write down the system, set $Q:=q^h$ and, for $0\leq j\leq n$, let
$$
K_j(x):=\sum\limits_{\ell=0}^j(-1)^\ell(Q-1)^{j-\ell}\binom{x}{\ell}\binom{n-x}{j-\ell}
$$
be the $Q$-ary Krawtchouk polynomials. For an additive code $\mC\subseteq\F_{q^h}^n$ of $\F_q$-dimension $k$, let $A_i$ be the number of codewords of weight $i$, and let $B_j$ be the number of elements of $\widetilde\mC^\perp$ of folded Hamming weight $j$. The MacWilliams identities for additive codes \cite{macwilliams1977theory,delsarte1973}, in the form of \cite[Theorem~5]{martinez2025folded}, state that for $0\leq j\leq n$, we have
$$
q^kB_j=\sum\limits_{i=0}^nA_iK_j(i).
$$
Hence, if $d(\mC)\geq d$ and $d^\perp(\mC)\geq d^\perp$, the numbers $A_i$ and $B_j$ are nonnegative integers satisfying
\begin{equation}\label{eq:additive-lp}
\begin{gathered}
A_0=B_0=1,\qquad\sum\limits_iA_i=q^k,\qquad A_i=0\ \ (1\leq i<d),\qquad B_j=0\ \ (1\leq j<d^\perp),\\
q^kB_j=\sum\limits_iA_iK_j(i)\ \ (0\leq j\leq n).
\end{gathered}
\end{equation}
We use this system in two ways. For the \textbf{linear programming bound} (LP) we only ask the $A_i$ and $B_j$ to be nonnegative real numbers. For its \textbf{integer version} (ILP) we ask them to be nonnegative integers, as they are for an actual code. In both cases, if there is no solution, then there is no code with these parameters. The integer version can exclude more parameters, but it is harder to solve. Since $B_j=q^{-k}\sum_iA_iK_j(i)$, the conditions $B_j\geq0$ are the inequalities $\sum_iA_iK_j(i)\geq0$. Without the conditions $B_j=0$ coming from the dual distance, these are Delsarte's linear programming constraints \cite{delsarte1973}.

By \cref{thm:distance-connectivity,thm:dual-berge-girth}, a faithful hypergraphic code with a connected underlying hypergraph $H$ on $N$ vertices and $m$ edges gives a solution of \eqref{eq:additive-lp} with $n=m$, $k=N-1$, $d=\lambda(H)$ and $d^\perp=g_{\rm B}(H)$. Since the code $\mC_H$ exists over every field, we obtain the following test: if, for some prime power $q$, \eqref{eq:additive-lp} has no solution with $n=m$, $k=N-1$ and given $d$ and $d^\perp$, then no connected $(h+1)$-uniform hypergraph with $N$ vertices and $m$ edges satisfies $\lambda(H)\geq d$ and $g_{\rm B}(H)\geq d^\perp$. \Cref{subsec:numerics} shows how these bounds compare with the ones coming from the hypergraph.

\subsection{A Laplacian lower bound on the distance}\label{sec:spectral}

Recall from \cref{subsec:hypergraphs} that the weighted $2$-section of $H$ is the graph on $V$ in which two distinct vertices $u$ and $v$ are joined with weight $a_{uv}$, the number of edges of $H$ containing both, and that $L_H$ is its Laplacian, with eigenvalues $0=\mu_1\leq\mu_2\leq\cdots\leq\mu_N$. We use $\mu_2$ to bound the minimum distance of the associated code from below. For graphs, the corresponding bound $|\delta_{\mG}(S)|\geq\mu_2|S|(N-|S|)/N$ is classical; see e.g. \cite{fiedler1973algebraic,brouwer2012spectra}.

\begin{theorem}\label{thm:spectral-lower}
Let $H$ be a connected hypergraph with $N\geq2$ vertices whose edges have at most $r\geq2$ vertices, and let $\mu_2$ be the second smallest eigenvalue of the Laplacian $L_H$ of its weighted $2$-section. For every $\emptyset\neq S\subsetneq V$ we have
$$
|\delta_H(S)|\geq\frac{\mu_2}{\lfloor r^2/4\rfloor}\cdot\frac{|S|(N-|S|)}{N}.
$$
Consequently, every hypergraphic additive code $\mC$ with underlying hypergraph $H$ satisfies $d(\mC)=\lambda(H)\geq\bigl\lceil\mu_2(N-1)/(N\lfloor r^2/4\rfloor)\bigr\rceil$.
\end{theorem}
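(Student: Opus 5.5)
Take the indicator-type test vector for the cut $S$ and compare two expressions for its Laplacian quadratic form. Set $s:=|S|$ and define $x\in\mathbb R^V$ by $x_v:=N-s$ for $v\in S$ and $x_v:=-s$ for $v\notin S$. Then $x\perp\mathbf 1$, and $\|x\|^2=s(N-s)^2+(N-s)s^2=Ns(N-s)$. Since $L_H$ is symmetric and positive semidefinite with kernel containing $\mathbf 1$, the Courant--Fischer characterisation gives $x^\top L_Hx\geq\mu_2\|x\|^2=\mu_2Ns(N-s)$.

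Next, bound $x^\top L_Hx$ from above in terms of the cut. Use the identity $x^\top L_Hx=\sum_{\{u,v\}}a_{uv}(x_u-x_v)^2$ recalled in \cref{subsec:hypergraphs}, and decompose the codegree as $a_{uv}=\sum_{e\in\mE}[u,v\in e]$. This gives
\[
x^\top L_Hx=\sum_{e\in\mE}\sum_{\{u,v\}\subseteq e}(x_u-x_v)^2 .
\]
A pair contributes only if it is split by $S$, and then it contributes $N^2$. An edge $e$ not in $\delta_H(S)$ contributes $0$. An edge $e\in\delta_H(S)$ with $j=|e\cap S|$ contributes $N^2j(|e|-j)\leq N^2\lfloor |e|^2/4\rfloor\leq N^2\lfloor r^2/4\rfloor$. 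Hence $x^\top L_Hx\leq N^2\lfloor r^2/4\rfloor\,|\delta_H(S)|$.

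Combining the two estimates gives $N^2\lfloor r^2/4\rfloor|\delta_H(S)|\geq\mu_2Ns(N-s)$, which is the stated inequality. For the consequence, note that $s(N-s)\geq N-1$ for $1\leq s\leq N-1$, because $s(N-s)-(N-1)=(s-1)(N-s-1)\geq0$. Therefore every edge cut has size at least $\mu_2(N-1)/(N\lfloor r^2/4\rfloor)$, and since $|\delta_H(S)|$ is an integer we may take the ceiling. This bounds $\lambda(H)$, and \cref{thm:distance-connectivity} turns it into $d(\mC)=\lambda(H)$; that theorem applies because $H$ is connected.

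None of the steps is delicate. The one point needing care is the per-edge bound $j(|e|-j)\leq\lfloor r^2/4\rfloor$, which must be checked as an integer maximisation so that the floor is justified. The remaining step is the correct bookkeeping of the codegree decomposition, so that the weighted 2-section form really does split as a sum over hyperedges.
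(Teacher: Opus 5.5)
Your proof is correct and is essentially the paper's argument. Your test vector is $N$ times the paper's centred indicator $y=x-\frac{|S|}{N}\mathbf 1$, and the per-edge bound $j(|e|-j)\leq\lfloor r^2/4\rfloor$, the inequality $s(N-s)\geq N-1$, and the appeal to \cref{thm:distance-connectivity} are the same steps the paper uses.
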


\begin{proof}
Let $x\in\mathbb R^V$ be the indicator vector of $S$, that is, $x_v=1$ if $v\in S$ and $x_v=0$ otherwise. Since $x^\top L_Hx=\sum\limits_{\{u,v\}}a_{uv}(x_u-x_v)^2$, we have
$$
x^\top L_Hx=\sum\limits_{u\in S,\ v\notin S}a_{uv},
$$
which is the number of pairs $(e,\{u,v\})$ where $e$ is an edge, $u\in e\cap S$ and $v\in e\setminus S$. An edge $e$ with $|e\cap S|=j$ occurs in $j(|e|-j)$ such pairs. This number is $0$ unless $e\in\delta_H(S)$, and it is at most $\lfloor|e|^2/4\rfloor\leq\lfloor r^2/4\rfloor$. Hence
$$
x^\top L_Hx\leq\lfloor r^2/4\rfloor\,|\delta_H(S)|.
$$
For a lower bound, let $y:=x-\frac{|S|}{N}\mathbf 1$. Since $L_H\mathbf 1=0$, we have $y^\top L_Hy=x^\top L_Hx$, and a direct computation gives $\sum_vy_v=0$ and $\|y\|^2=|S|(N-|S|)/N$. The matrix $L_H$ is symmetric, so it has an orthonormal basis of eigenvectors $u_1,\ldots,u_N$ with $L_Hu_i=\mu_iu_i$, and we may take $u_1=\mathbf 1/\sqrt N$. Since $y$ is orthogonal to $u_1$, we can write $y=\sum_{i\geq2}c_iu_i$, and then
$$
y^\top L_Hy=\sum\limits_{i\geq2}\mu_ic_i^2\geq\mu_2\sum\limits_{i\geq2}c_i^2=\mu_2\|y\|^2=\mu_2\frac{|S|(N-|S|)}{N}.
$$
Combining the two estimates gives the inequality. Finally, $|S|(N-|S|)\geq N-1$ whenever $1\leq|S|\leq N-1$, and $d(\mC)=\lambda(H)$ by \cref{thm:distance-connectivity}.
\end{proof}

\begin{example}\label{ex:spectral-examples}
For the Fano plane (\cref{ex:fano}), $\mu_2=7$ and $r=3$, so $d(\mC_H)\geq\lceil7\cdot6/(7\cdot2)\rceil=3$. This bound is sharp by \cref{ex:distance-examples}. For $H_0$ (\cref{ex:small-hypergraph}), $\mu_2=5$ gives $d(\mC)\geq\lceil5\cdot3/(4\cdot2)\rceil=2=\lambda(H_0)$. For $H=K_4^{(3)}$, every pair of vertices lies in exactly two edges, so $L_H=2(4I-J)$ with eigenvalues $0,8,8,8$, and \cref{thm:spectral-lower} gives $d(\mC_H)\geq\lceil 8\cdot3/(4\cdot2)\rceil=3$. Together with the Singleton bound $d(\mC_H)\leq4-\lceil3/2\rceil+1=3$, this shows again that $\mC_H$ is QMDS (\cref{ex:qmds-examples}).
\end{example}

The Fano plane is an instance of the following general statement.

\begin{corollary}\label{cor:sts}
Let $H$ be a Steiner triple system on $N\geq7$ points, that is, a $3$-uniform hypergraph in which every pair of vertices lies in exactly one edge. Then $\lambda(H)=(N-1)/2$, and $\mC_H$ is an $\bigl[N(N-1)/6,(N-1)/2,(N-1)/2\bigr]_q^2$ code with $d^\perp(\mC_H)=3$ for every prime power $q$. It attains the bound of \cref{prop:moore}(i) with $t=1$.
\end{corollary}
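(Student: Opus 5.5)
We need to prove: for an STS on $N\ge7$ points, $\lambda(H)=(N-1)/2$; that $\mC_H$ has the stated parameters with $d^\perp=3$; and that it attains Proposition~\ref{prop:moore}(i) with $t=1$.

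\textbf{Basic counts.} Every vertex lies in exactly $(N-1)/2$ triples, since the $N-1$ pairs through it are covered two per triple. So $H$ is regular of degree $(N-1)/2$, and it has $m=N(N-1)/6$ edges. $H$ is connected, because every pair of vertices lies in an edge. $H$ is linear, so $g_{\rm B}(H)\ge3$. For $N\ge7$ there is a Berge cycle of length $3$: take two triples $\{a,b,c\}$ and $\{a,d,e\}$, and the triple through $b,d$, which differs from both. Hence $g_{\rm B}(H)=3$.

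\textbf{Edge-connectivity.} Inequality \eqref{eq:lambda-delta} gives $\lambda(H)\le(N-1)/2$. For the reverse inequality I would use the count from the Fano example rather than the spectral bound, since that bound only yields $\lambda(H)\ge\lceil\mu_2(N-1)/(2N)\rceil$.

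Fix $\emptyset\ne S\subsetneq V$ with $s=|S|$. Each of the $s(N-s)$ pairs $\{u,v\}$ with $u\in S$, $v\notin S$ lies in exactly one triple, and each triple in $\delta_H(S)$ contains exactly two such pairs. Therefore $|\delta_H(S)|=s(N-s)/2\ge(N-1)/2$, using $s(N-s)\ge N-1$ for $1\le s\le N-1$. Alternatively, the same conclusion follows from Theorem~\ref{thm:spectral-lower}: $L_H=NI-J$ gives $\mu_2=N$ and the bound $\lceil (N-1)/2\rceil$. The direct count is cleaner.

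\textbf{Conclusion.} By Theorem~\ref{thm:distance-connectivity} (with $r=3$, $h=2$), $\mC_H$ is a $[m,(N-1)/2,(N-1)/2]_q^2$ code. It is faithful by Theorem~\ref{thm:hypergraphic-rank}, so Theorem~\ref{thm:dual-berge-girth} gives $d^\perp(\mC_H)=g_{\rm B}(H)=3$.

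Finally, Proposition~\ref{prop:moore}(i) with $t=1$ and $h=2$ reads $k\ge 2d$. Here $k=N-1=2d$, so equality holds.

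The only mildly delicate step is exhibiting a Berge $3$-cycle for $N\ge7$. It needs $N>3$ to guarantee two triples through $a$, and distinctness of the third triple follows from linearity.
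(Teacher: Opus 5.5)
Your proof is correct. It differs from the paper's mainly in how you get the lower bound $\lambda(H)\geq(N-1)/2$.

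The paper presents the corollary as an application of \cref{thm:spectral-lower}. Since $L_H=NI-J$, one has $\mu_2=N$, and the theorem with $r=3$ gives $\lambda(H)\geq\lceil(N-1)/2\rceil=(N-1)/2$. You instead double count crossing pairs, as in \cref{ex:fano}, and obtain the exact value $|\delta_H(S)|=|S|(N-|S|)/2$ for every $S$.

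The two arguments are really the same computation in different clothing. For a Steiner triple system, both inequalities in the proof of \cref{thm:spectral-lower} are equalities. Every edge of $\delta_H(S)$ contributes exactly $j(3-j)=2$ crossing pairs, so $x^\top L_Hx=2|\delta_H(S)|$. Also $L_H$ acts as $N\cdot I$ on $\mathbf 1^\perp$, so $y^\top L_Hy=N\|y\|^2$.

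Each version buys something different:
\begin{itemize}
\item Yours is self-contained and yields slightly more. Since $|S|(N-|S|)-(N-1)=(|S|-1)(N-|S|-1)$, the minimum cuts are exactly the vertex stars.
\item The paper's version exhibits an infinite family on which the spectral bound is attained, which is how the corollary is used in \cref{subsec:numerics}.
\end{itemize}
Your hesitation about the spectral route was therefore unnecessary.

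The remaining steps match the paper:
\begin{itemize}
\item For the Berge $3$-cycle, you use two lines through a point and the line joining points on them. The paper uses the three lines through pairs of three non-collinear points. Both are fine.
\item The parameters follow via \cref{thm:distance-connectivity,thm:dual-berge-girth}.
\item Equality $k=N-1=2d$ holds in \cref{prop:moore}(i).
\end{itemize}
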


\begin{proof}
Every vertex lies in $(N-1)/2$ edges, and $H$ has $N(N-1)/6$ edges. The weighted $2$-section is $K_N$ with all weights equal to $1$, so $L_H=NI-J$ and $\mu_2=N$. \Cref{thm:spectral-lower} with $r=3$ gives $\lambda(H)\geq\lceil(N-1)/2\rceil=(N-1)/2$, and \eqref{eq:lambda-delta} gives the reverse inequality. The hypergraph $H$ is linear, and if $a,b,c$ are three vertices not in a common edge, the three edges through two of them form a Berge cycle of length $3$, so $g_{\rm B}(H)=3$. The parameters now follow from \cref{thm:distance-connectivity,thm:dual-berge-girth}, and $k=N-1=h\,d(\mC_H)$ with $h=2$.
\end{proof}

\subsection{Comparison on examples}\label{subsec:numerics}

We now compare the bounds of this paper with the general bounds for additive codes on a list of examples. All values in the tables below were computed with SageMath. The SageMath program used for the computations in this section, together with its output, is available at \url{https://github.com/gianiraalfarano/hypergraphic_additive_codes}.

\begin{example}\label{ex:catalogue}
Besides $H_0$, the complete hypergraphs $K_N^{(r)}$, the Fano plane, the Pasch configuration and the hypergraph $H_1$ of \cref{ex:small-hypergraph,ex:fano,ex:qmds-examples}, the tables below use the following classical incidence structures, with the points as vertices and the lines as edges.
\begin{enumerate}[label=\textup{(\alph*)}]
\item The affine plane $\mathrm{AG}(2,3)$, with the $9$ points of $\F_3^2$ and its $12$ lines, and the projective plane $\PG(2,3)$, with $13$ points and $13$ lines of size $4$; see \cite{colbourn2007handbook}.
\item The Desargues configuration, whose points are the $2$-subsets and whose lines are the $3$-subsets of $[5]$, with inclusion as incidence; see \cite{grunbaum2009configurations}.
\item The generalised quadrangles $\mathrm{GQ}(s,t)$, with $s+1$ points on each line and $t+1$ lines through each point \cite{payne2009finite}: $\mathrm{GQ}(2,1)$ is the $3\times3$ grid, whose lines are the rows and the columns; $\mathrm{GQ}(2,2)$ consists of the points of $\PG(3,2)$ and the lines that are totally isotropic for the symplectic form $x_0y_1+x_1y_0+x_2y_3+x_3y_2$; $\mathrm{GQ}(2,4)$ consists of the points and lines of the elliptic quadric $x_0x_1+x_2x_3+x_4^2+x_4x_5+x_5^2=0$ in $\PG(5,2)$; and $\mathrm{GQ}(4,2)$ is the dual of $\mathrm{GQ}(2,4)$.
\item The split Cayley hexagon $\mathrm{GH}(2,2)$ \cite{vanmaldeghem1998generalized}: its points are the $63$ points of the parabolic quadric $X_0X_4+X_1X_5+X_2X_6=X_3^2$ in $\PG(6,2)$, and its lines are the $63$ lines of this quadric whose Grassmann coordinates $p_{ij}=x_iy_j-x_jy_i$ satisfy $p_{12}=p_{34}$, $p_{54}=p_{32}$, $p_{20}=p_{35}$, $p_{65}=p_{30}$, $p_{01}=p_{36}$ and $p_{46}=p_{31}$.
\end{enumerate}
\end{example}

\medskip\noindent\emph{Minimum distance of hypergraphic codes.} \Cref{tab:distance-hypergraphic} contains the faithful hypergraphic codes $\mC_H$ of a list of connected uniform hypergraphs $H$. For each of them we give the true values $d^\perp=g_{\rm B}(H)$ and $d=\lambda(H)$, the lower bound of \cref{thm:spectral-lower}, and five upper bounds on $d$:
\begin{itemize}
\item the minimum degree, by \eqref{eq:lambda-delta};
\item the girth--distance bound, that is, the largest $d$ allowed by \cref{prop:moore}(i) and (ii) for the given $k=N-1$ and $d^\perp$;
\item the Singleton bound \eqref{eq:singleton};
\item the additive Griesmer bound (\cref{thm:additive-griesmer});
\item the linear programming bound, that is, the largest $d$ for which \eqref{eq:additive-lp} with $n=m$, $k=N-1$ and $d^\perp=g_{\rm B}(H)$ has a solution in nonnegative real numbers.
\end{itemize}
The last two bounds depend on $q$, and we give them for $q=2$. The linear programming column does not change if we replace $g_{\rm B}(H)$ by $2$, that is, if we only use that the code is faithful. In every row $\tau(H)=\lceil(N-1)/h\rceil$, so the bound $m-\tau(H)+1$ of \cref{prop:spanning-singleton} is the Singleton bound.

\begin{table}[ht]
\centering
\small
\setlength{\tabcolsep}{3.6pt}
\begin{tabular}{@{}llrrrrrrrrrrr@{}}
\toprule
& & & & & & & lower & \multicolumn{5}{c}{upper bounds on $d$}\\
\cmidrule(l){8-8}\cmidrule(l){9-13}
$H$ & defined in & $h$ & $N$ & $m$ & $d^\perp$ & $d$ & Thm.~\labelcref{thm:spectral-lower} & \eqref{eq:lambda-delta} & Prop.~\labelcref{prop:moore} & \eqref{eq:singleton} & Thm.~\labelcref{thm:additive-griesmer} & \eqref{eq:additive-lp}\\
\midrule
$H_0$ & Ex.~\labelcref{ex:small-hypergraph} & 2 & 4 & 3 & 2 & 2 & 2 & 2 & -- & 2 & 2 & 2\\
$K_4^{(3)}$ & Ex.~\labelcref{ex:qmds-examples} & 2 & 4 & 4 & 2 & 3 & 3 & 3 & -- & 3 & 3 & 3\\
$K_5^{(3)}$ & Sec.~\labelcref{subsec:hypergraphs} & 2 & 5 & 10 & 2 & 6 & 6 & 6 & -- & 9 & 8 & 8\\
$K_6^{(3)}$ & Sec.~\labelcref{subsec:hypergraphs} & 2 & 6 & 20 & 2 & 10 & 10 & 10 & -- & 18 & 15 & 15\\
Pasch configuration & Ex.~\labelcref{ex:qmds-examples} & 2 & 6 & 4 & 3 & 2 & 2 & 2 & 2 & 2 & 2 & 2\\
Fano plane & Ex.~\labelcref{ex:fano} & 2 & 7 & 7 & 3 & 3 & 3 & 3 & 3 & 5 & 4 & 4\\
$\mathrm{AG}(2,3)$ & Ex.~\labelcref{ex:catalogue} & 2 & 9 & 12 & 3 & 4 & 4 & 4 & 4 & 9 & 8 & 7\\
Desargues configuration & Ex.~\labelcref{ex:catalogue} & 2 & 10 & 10 & 3 & 3 & 3 & 3 & 4 & 6 & 6 & 6\\
$\mathrm{GQ}(2,1)$ & Ex.~\labelcref{ex:catalogue} & 2 & 9 & 6 & 4 & 2 & 2 & 2 & 2 & 3 & 3 & 2\\
$\mathrm{GQ}(2,2)$ & Ex.~\labelcref{ex:catalogue} & 2 & 15 & 15 & 4 & 3 & 3 & 3 & 3 & 9 & 8 & 7\\
$\mathrm{GQ}(2,4)$ & Ex.~\labelcref{ex:catalogue} & 2 & 27 & 45 & 4 & 5 & 5 & 5 & 5 & 33 & 28 & 25\\
$\mathrm{GH}(2,2)$ & Ex.~\labelcref{ex:catalogue} & 2 & 63 & 63 & 6 & 3 & 2 & 3 & 3 & 33 & 32 & 24\\
$H_1$ & Ex.~\labelcref{ex:qmds-examples} & 3 & 6 & 3 & 2 & 2 & 2 & 2 & -- & 2 & 2 & 2\\
$K_5^{(4)}$ & Ex.~\labelcref{ex:qmds-examples} & 3 & 5 & 5 & 2 & 4 & 3 & 4 & -- & 4 & 4 & 4\\
$\PG(2,3)$ & Ex.~\labelcref{ex:catalogue} & 3 & 13 & 13 & 3 & 4 & 3 & 4 & 4 & 10 & 10 & 9\\
$\mathrm{GQ}(4,2)$ & Ex.~\labelcref{ex:catalogue} & 4 & 45 & 27 & 4 & 3 & 2 & 3 & 3 & 17 & 23 & 16\\
\bottomrule
\end{tabular}
\caption{Faithful hypergraphic $[m,(N-1)/h,d]_q^h$ codes $\mC_H$ with $d=\lambda(H)$ and $d^\perp=g_{\rm B}(H)$ (\cref{thm:distance-connectivity,thm:dual-berge-girth}), compared with the lower bound of \cref{thm:spectral-lower} and with the upper bounds \eqref{eq:lambda-delta}, \cref{prop:moore}, \eqref{eq:singleton}, \cref{thm:additive-griesmer} and the linear programming bound from \eqref{eq:additive-lp}. The last two columns are for $q=2$; a dash means that \cref{prop:moore} gives no restriction because $d^\perp\leq2$.}
\label{tab:distance-hypergraphic}
\end{table}

In every row $\lambda(H)=\delta_{\min}(H)$, so the degree bound \eqref{eq:lambda-delta} is attained. The girth--distance bound gives the exact value of $d$ for every linear hypergraph in the table except the Desargues configuration, and the spectral lower bound is attained in $12$ of the $16$ rows, including the Steiner triple systems (see \cref{cor:sts}) and the three generalised quadrangles with $h=2$. On the other hand, the Singleton, Griesmer and linear programming bounds give the true value of $d$ only for the five QMDS codes of the table and, for the linear programming bound, for the $3\times3$ grid, and they are never better than the degree bound. So for hypergraphic codes the useful bounds are the ones that come from the hypergraph.

\medskip\noindent\emph{Critical exponent.} \Cref{tab:crit-hypergraphic} compares $\crit(\mC_H)$ for $q=2$ with the lower bound of \cref{cor:crit-hypergraph-bounds} and with the upper bounds of \cref{cor:crit-hypergraph-bounds,prop:crit-general-bounds,cor:kung-hypergraph}. We omit the lower bound $\lceil n/w_{\max}\rceil$ of \cref{prop:crit-general-bounds}, since by \cref{rem:wmax} it is equal to $\min\{\crit(\mC),2\}$. The bounds coming from colourings and the counting bound $t_0$ are close to the true value, whereas the Kung-type bound grows linearly with $N$. For $K_N^{(3)}$ we have $\alpha_{\mathrm w}=2$ and $\chi_{\mathrm w}=\lceil N/2\rceil$, so the lower bound of \cref{cor:crit-hypergraph-bounds} is attained.

\begin{table}[ht]
\centering
\footnotesize
\setlength{\tabcolsep}{3pt}
\begin{tabular}{@{}llrrrrrrrr@{}}
\toprule
& & & & & lower & \multicolumn{4}{c}{upper bounds on $\crit(\mC_H)$}\\
\cmidrule(l){6-6}\cmidrule(l){7-10}
$H$ & defined in & $N$ & $\chi_{\mathrm w}(H)$ & $\crit(\mC_H)$ & Cor.~\labelcref{cor:crit-hypergraph-bounds} & Cor.~\labelcref{cor:crit-hypergraph-bounds} & Prop.~\labelcref{prop:crit-general-bounds} & Cor.~\labelcref{cor:kung-hypergraph} & Prop.~\labelcref{prop:crit-general-bounds}\\
& & & & & $\alpha_{\mathrm w}$ & $\Delta$ & $t_0$ & & $k-h+1$\\
\midrule
$K_5^{(3)}$ & Sec.~\labelcref{subsec:hypergraphs} & 5 & 3 & 2 & 2 & 3 & 2 & -- & 3\\
$K_6^{(3)}$ & Sec.~\labelcref{subsec:hypergraphs} & 6 & 3 & 2 & 2 & 4 & 3 & -- & 4\\
$K_9^{(3)}$ & Sec.~\labelcref{subsec:hypergraphs} & 9 & 5 & 3 & 3 & 5 & 4 & -- & 7\\
Fano plane & Ex.~\labelcref{ex:fano} & 7 & 3 & 2 & 1 & 2 & 2 & 4 & 5\\
$\mathrm{AG}(2,3)$ & Ex.~\labelcref{ex:catalogue} & 9 & 3 & 2 & 2 & 3 & 2 & 6 & 7\\
Desargues configuration & Ex.~\labelcref{ex:catalogue} & 10 & 2 & 1 & 1 & 2 & 2 & 7 & 8\\
$\mathrm{GQ}(2,2)$ & Ex.~\labelcref{ex:catalogue} & 15 & 2 & 1 & 1 & 2 & 2 & 10 & 13\\
$\mathrm{GQ}(2,4)$ & Ex.~\labelcref{ex:catalogue} & 27 & 2 & 1 & 1 & 3 & 3 & 22 & 25\\
$\mathrm{GH}(2,2)$ & Ex.~\labelcref{ex:catalogue} & 63 & 2 & 1 & 1 & 2 & 3 & 54 & 61\\
$\PG(2,3)$, $h=3$ & Ex.~\labelcref{ex:catalogue} & 13 & 2 & 1 & 1 & 3 & 2 & 8 & 10\\
$\mathrm{GQ}(4,2)$, $h=4$ & Ex.~\labelcref{ex:catalogue} & 45 & 2 & 1 & 1 & 2 & 2 & 34 & 41\\
\bottomrule
\end{tabular}
\caption{The critical exponent of hypergraphic codes $\mC_H$ for $q=2$, computed by \cref{thm:chromatic-critical}, with the lower bound $\lceil\log_2\lceil N/\alpha_{\mathrm w}(H)\rceil\rceil$ and the upper bound $\lceil\log_2(\Delta(H)+1)\rceil$ of \cref{cor:crit-hypergraph-bounds}, the bounds $t_0$ and $k-h+1$ of \cref{prop:crit-general-bounds}, and the bound of \cref{cor:kung-hypergraph}; a dash means that \cref{cor:kung-hypergraph} does not apply. Here $h=2$ unless stated otherwise.}
\label{tab:crit-hypergraphic}
\end{table}

\medskip\noindent\emph{General additive codes.} \Cref{tab:general} contains some faithful additive codes that are not of the form $\mC_H$: the codes of spreads (\cref{ex:spread}), the hexacode \cite{huffman2003fundamentals}, regarded as an additive $[6,6/2,4]_2^2$ code, and two codes with parameters $[22,7/2,16]_2^2$ and $[25,7/2,18]_2^2$ found by computer search in \cite[Section~F]{kurz2024additive}. The first of these has the largest possible length among the codes with $k=7$ and $n-d\leq6$; see \cref{tab:lengths}. For all codes in the table the Griesmer and linear programming bounds on $d$ are attained, so they have the largest minimum distance for their length and dimension. For the critical exponent, both $t_0$ and the Kung-type bound of \cref{thm:kung-block} are attained by the spreads, as shown in \cref{ex:spread}; by \cref{rem:kung-extremal}, for $h\geq2$ and $d^\perp=3$ the spreads are the only faithful codes that attain the Kung-type bound.

\begin{table}[ht]
\centering
\footnotesize
\setlength{\tabcolsep}{2.6pt}
\begin{tabular}{@{}llrrrrrrrrrrrrr@{}}
\toprule
& & & & & & & & \multicolumn{3}{c}{upper bounds on $d$} & & \multicolumn{3}{c}{upper bounds on $\crit(\mC)$}\\
\cmidrule(l){9-11}\cmidrule(l){13-15}
code & defined in & $q$ & $h$ & $n$ & $k$ & $d$ & $d^\perp$ & \eqref{eq:singleton} & Thm.~\labelcref{thm:additive-griesmer} & \eqref{eq:additive-lp} & $\crit(\mC)$ & Prop.~\labelcref{prop:crit-general-bounds} & Thm.~\labelcref{thm:kung-block} & Prop.~\labelcref{prop:crit-general-bounds}\\
& & & & & & & & & & & & $t_0$ & & $k-h+1$\\
\midrule
spread & Ex.~\labelcref{ex:spread} & 2 & 2 & 5 & 4 & 4 & 3 & 4 & 4 & 4 & 2 & 2 & 2 & 3\\
spread & Ex.~\labelcref{ex:spread} & 3 & 2 & 10 & 4 & 9 & 3 & 9 & 9 & 9 & 2 & 2 & 2 & 3\\
spread & Ex.~\labelcref{ex:spread} & 2 & 3 & 9 & 6 & 8 & 3 & 8 & 8 & 8 & 2 & 2 & 2 & 4\\
hexacode & \cite{huffman2003fundamentals} & 2 & 2 & 6 & 6 & 4 & 4 & 4 & 4 & 4 & 1 & 2 & 2 & 5\\
search & \cite[Sec.~F]{kurz2024additive} & 2 & 2 & 22 & 7 & 16 & 2 & 19 & 16 & 16 & 2 & 3 & -- & 6\\
search & \cite[Sec.~F]{kurz2024additive} & 2 & 2 & 25 & 7 & 18 & 3 & 22 & 18 & 18 & 2 & 3 & 5 & 6\\
\bottomrule
\end{tabular}
\caption{Some faithful additive $[n,k/h,d]_q^h$ codes that are not of the form $\mC_H$, compared with the upper bounds \eqref{eq:singleton}, \cref{thm:additive-griesmer} and the linear programming bound from \eqref{eq:additive-lp} (with the dual distance of the code) on $d$, and with the bounds of \cref{prop:crit-general-bounds,thm:kung-block} on $\crit(\mC)$; a dash means that \cref{thm:kung-block} does not apply because $d^\perp\leq2$.}
\label{tab:general}
\end{table}

\medskip\noindent\emph{Optimal lengths.} Finally, we consider general faithful additive codes over $\F_4$, that is, $q=h=2$. Let $n_2(k,2;s)$ be the largest length $n$ of a faithful additive code over $\F_4$ of $\F_2$-dimension $k$ with $n-d\leq s$. These numbers were determined for $k\leq8$ in \cite[Theorems~12--15]{kurz2024additive}; see also \cite{kurz2024optimal}. For each $(k,s)$ we compute three upper bounds on $n_2(k,2;s)$: the largest $n$ allowed by the additive Griesmer bound, and the largest $n$ for which \eqref{eq:additive-lp} with $d=n-s$ and $d^\perp=2$ has a solution in nonnegative real numbers (LP) or in nonnegative integers (ILP). For $k\in\{5,6\}$ and $\lceil k/2\rceil-1\leq s\leq12$, the Griesmer bound already gives the exact value $n_2(k,2;s)$, except for $n_2(6,2;3)=9<11$, and neither program does better, so \cref{tab:lengths} only lists $k\in\{7,8\}$. Two variants of the linear program give exactly the same values: dropping the condition $B_1=0$, which gives Delsarte's bound, and adding Delsarte's inequalities for the associated binary linear code of \cite[Lemma~4]{kurz2024additive} described after \cref{thm:additive-griesmer}. For $k=5$, for $k=6$ with $s\geq4$ and for $k=7$ with $s\geq8$, the linear program gives exactly the simple bound $\lfloor s[k]_2/[k-2]_2\rfloor$ of \cite[Lemma~16]{kurz2024additive}, which explains why it is weak there.

\begin{table}[ht]
\centering
\small
\begin{tabular}{@{}rrrrrcrrrr@{}}
\toprule
& \multicolumn{4}{c}{$k=7$} & & \multicolumn{4}{c}{$k=8$}\\
\cmidrule(l){2-5}\cmidrule(l){7-10}
$s$ & Thm.~\labelcref{thm:additive-griesmer} & LP \eqref{eq:additive-lp} & ILP \eqref{eq:additive-lp} & $n_2(7,2;s)$ & & Thm.~\labelcref{thm:additive-griesmer} & LP \eqref{eq:additive-lp} & ILP \eqref{eq:additive-lp} & $n_2(8,2;s)$\\
\midrule
3 & 11 & \textbf{9} & \textbf{9} & 7 & & 9 & \textbf{5} & \textbf{5} & 5\\
4 & 14 & 14 & 14 & 12 & & 12 & \textbf{11} & \textbf{11} & 10\\
5 & 19 & \textbf{18} & \textbf{18} & 17 & & 17 & 17 & 17 & 17\\
6 & 22 & 23 & 22 & 22 & & 22 & 22 & 22 & 18\\
7 & 27 & 27 & 27 & 27 & & 25 & 26 & 26 & 23\\
8 & 32 & 32 & 32 & 32 & & 30 & 30 & 30 & 28\\
9 & 35 & 36 & 36 & 35 & & 33 & 34 & 34 & 33\\
10 & 40 & 40 & 40 & 40 & & 38 & 38 & 38 & 36\\
11 & 43 & 45 & 44 & 43 & & 43 & 43 & \textbf{42} & 40\\
12 & 46 & 49 & 48 & 46 & & 44 & 47 & 47 & 44\\
\bottomrule
\end{tabular}
\caption{Upper bounds on $n_2(k,2;s)$: the largest $n$ allowed by the additive Griesmer bound (\cref{thm:additive-griesmer}), and the largest $n$ for which \eqref{eq:additive-lp} with $d=n-s$ and $d^\perp=2$ has a solution in nonnegative real numbers (LP) or in nonnegative integers (ILP). The last columns give the exact values from \cite[Theorems~14 and~15]{kurz2024additive}. Entries in bold improve on the Griesmer bound.}
\label{tab:lengths}
\end{table}

The linear programming bound improves on the Griesmer bound in four cases, and its integer version in one more case, $(k,s)=(8,11)$. For $(k,s)=(8,3)$ both give the exact value $n_2(8,2;3)=5$, which is attained, for instance, by the $\F_4$-linear $[5,4,2]_4$ parity-check code, regarded as an additive code of $\F_2$-dimension $8$. In all other cases in which $n_2(k,2;s)$ is smaller than the Griesmer bound, the exact value was proved in \cite{kurz2024additive} and the references therein by other methods, for instance with bounds for the associated linear codes, and the programs do not reach it. On their own, the programs can also be weaker than the Griesmer bound, for instance for $(k,s)=(7,12)$, so in practice one takes the smaller of the two values. Asking for integer solutions lowers the value of the linear program in eight of the $42$ cases with $5\leq k\leq8$, but it goes below the Griesmer bound only for $(k,s)=(8,11)$.

The linear programming bound only uses the numbers $A_i$ and $B_j$, and the same holds for the eigenvalue bounds for $t$-independent sets in the Hamming graph \cite{abiad2019kindependence,abiad2025eigenvalue} and for the semidefinite programming bounds of \cite{gijswijt2006nonbinary}. Stronger bounds might come from methods that also use the subspaces of the projective system $\mX(\mC)$ themselves, for instance semidefinite programming over the Grassmannian of $h$-subspaces, in the spirit of \cite{bachoc2013projective}. We leave this for future work.

\medskip

\section*{Acknowledgements}
The author is supported by the Agence Nationale de la Recherche through grant number ANR-24-CPJ1-0075-01, and by Rennes Métropole through a grant AIS.

\medskip

\section*{Declaration on the use of AI}
The author certifies that the ideas and results of the paper are her own. The author used Claude, Anthropic (version Opus 5.5) during the preparation of this manuscript only to review the mathematical arguments, to edit the text, and to check examples computationally. Moreover, the programs in SageMath for the computations of \cref{subsec:numerics} were also developed with the aid of this tool. All mathematical content, claims, and conclusions have been reviewed and verified by the author, who takes full responsibility for them.

\bibliographystyle{abbrv}
\bibliography{references}
\end{document}